\theoremstyle{plain}
\newtheorem{theorem}{Theorem}[section]
\newtheorem{defi}[theorem]{Definition}
\newtheorem{lemma}[theorem]{Lemma}
\newtheorem{corollary}[theorem]{Corollary}
\newtheorem{proposition}[theorem]{Proposition}
\theoremstyle{definition}
\newtheorem{remark}[theorem]{Remark}
\newtheorem{example}[theorem]{Example}
\newcommand{\iR}{\mathbb{R}}
\newcommand{\iN}{\mathbb{N}}
\newcommand{\iZ}{\mathbb{Z}}
\newcommand{\cH}{\mathcal{H}}
\newcommand{\cP}{\mathcal{P}}
\newcommand{\cI}{\mathcal{I}}
\newcommand{\cW}{\mathcal{W}}
\newcommand{\R}{\mathbb{R}}
\newcommand{\N}{\mathbb{N}}
\newcommand{\Z}{\mathbb{Z}}
\newcommand{\cA}{\mathcal{A}}
\newcommand{\cB}{\mathcal{B}}
\begin{document}

\title{The entropy method under curvature-dimension conditions in the spirit of Bakry-\'Emery in the discrete setting of Markov chains}
\date{\today}
\author{Frederic Weber}
\email{frederic.weber@uni-ulm.de}
\author{Rico Zacher$^*$}
\thanks{$^*$Corresponding author. F.\ W.\ is supported by a PhD-scholarship of the ``Studienstiftung des
deutschen Volkes'', Germany. R.\ Z.\ is supported by the DFG (project number 355354916, GZ ZA 547/4-2).}
\email[Corresponding author:]{rico.zacher@uni-ulm.de}
\address[Frederic Weber, Rico Zacher]{Institut f\"ur Angewandte Analysis, Universit\"at Ulm, Helmholtzstra\ss{}e 18, 89081 Ulm, Germany.}


\begin{abstract}
We consider continuous-time (not necessarily finite) Markov chains on discrete spaces and identify a curvature-dimension inequality, the condition $CD_\Upsilon(\kappa,\infty)$, which serves as a natural
analogue of the classical Bakry-\'Emery condition $CD(\kappa,\infty)$ in several respects. In particular, it is tailor-made
to the classical approach of proofing the modified logarithmic Sobolev inequality via computing and estimating the
second time derivative of the entropy along the heat flow generated by the generator of the Markov chain. We prove that
curvature bounds in the sense of $CD_\Upsilon$ are preserved under tensorization, discuss links to other notions of discrete curvature and consider a variety of examples including complete graphs, the hypercube and birth-death processes. 
We further consider power type entropies and determine, in the same spirit, a natural CD condition which leads to 
Beckner inequalities. The $CD_\Upsilon$ condition is also shown to be compatible with the diffusive setting, in the sense that
corresponding hybrid processes enjoy a tensorization property.
\end{abstract}

\maketitle

\bigskip
\noindent \textbf{Keywords:} Markov chain, discrete space, discrete curvature, curvature-dimension inequalities, entropy,
modified logarithmic Sobolev inequality, tensorization, Beckner inequality, non-local operator
 
 \noindent \textbf{MSC(2010)}: 60J27 (primary), 47D07, 39A12 (secondary).

\section{Introduction and main results}
\subsection{The  $\Gamma$-calculus by Bakry and \'Emery}
Curvature-dimension (CD) conditions play a central role in the study of Markov semigroups
and operators and related functional inequalities like, e.g., the spectral gap or logarithmic Sobolev inequality (cf.\ \cite{BGL}). They encode important information about the geometric
properties (curvature and dimension) of the underlying state space $X$, e.g.\ a Riemannian manifold, and thus
also constitute an important tool in geometric analysis (\cite{Li}).

The classical CD condition
has been introduced by Bakry and \'Emery in the pioneering work \cite{BaEm} and is formulated 
by means of the so-called $\Gamma$-calculus. Denoting by $L$ the infinitesimal generator of a Markov semigroup,
the carr\'e du champ operator $\Gamma$ and
the iterated carr\'e du champ operator $\Gamma_2$
are defined on a suitable algebra of functions as
\begin{align*}
\Gamma(f,g) & =\frac{1}{2}\big(L(fg)-fLg-gLf\big),\label{def:Gamma(f,g)} \\
\Gamma_2(f,g) & =\frac{1}{2}\big(L\Gamma(f,g)-\Gamma(f,Lg)-\Gamma(Lf,g)\big),\nonumber
\end{align*}
and one sets $\Gamma(f)=\Gamma(f,f)$ and $\Gamma_2(f)=\Gamma_2(f,f)$. The operator $L$ is said to
satisfy the curvature-dimension inequality $CD(\kappa,d)$, for $\kappa\in \iR$ and $d\in [1,\infty]$,
if for every  $f$ in a sufficiently rich class of functions $\cA$,
\begin{equation} \label{BEDEF}
\Gamma_2(f)\ge \frac{1}{d}\,\big(Lf)^2+\kappa \Gamma(f),\quad \mu-\mbox{a.e.},
\end{equation}
where $\mu$ is a fixed invariant and reversible measure on $X$ for the semigroup $e^{Lt}$. 

It is well known that various fundamental results for Markov semigroups can be obtained 
under a certain CD condition in the Bakry-\'Emery sense. For example, positive curvature, i.e.\
$CD(\kappa,\infty)$ with $\kappa>0$, implies a logarithmic Sobolev inequality, which is
equivalent to both exponential decay in entropy and hypercontractivity of the Markov semigroup
and can be also used to show the Gaussian measure concentration property for the measure $\mu$, see
 \cite[Chapter 5]{BGL}. Furthermore, finite dimension and nonnegative curvature, i.e.\ $CD(0,d)$
 with $d<\infty$, imply the Li-Yau inequality, which in turn leads to the parabolic Harnack inequality,
 cf.\ \cite[Chapter 6]{BGL}.
 
A key assumption in this rather general and powerful theory, for which the monograph \cite{BGL}
is an excellent source, is the so-called {\em diffusion property}, which means that the generator $L$
satisfies the chain rule 
\begin{equation} \label{Lchainrule}
L H(f)=H'(f)Lf+H''(f)\Gamma(f)
\end{equation}
for every $H\in C^2(\iR)$ and every sufficiently smooth function $f$. The diffusion property can be also expressed equivalently via a certain chain rule of the corresponding carr\'e du champ operator. 
The typical example is a second order differential operator on $\iR^d$ (or on a manifold) without zeroth order term.
For example, if $L=\Delta_g$ is the Laplace-Beltrami operator on a Riemannian manifold $X=(M,g)$ and 
$\mu_g$ the canonical Riemannian measure, then using the Bochner-Lichnerowicz formula one can show
that $CD(\kappa,d)$ is equivalent to Ric$_g(x)\ge \kappa g(x)$ for almost every $x\in M$ and dim$\,M\le d$.

If one wants to extend the theory from \cite{BGL} to the discrete setting of Markov chains,
which is highly desirable, then
the {\em failure} of the diffusion property creates a major difficulty.  The same problem occurs in case
of non-local operators like, e.g., the fractional Laplacian on $\iR^d$, where the setting is continuous but the
chain rule is violated. Moreover, even though it is still possible to define the operators $\Gamma$ and $\Gamma_2$
and to deduce positive results (like e.g.\ eigenvalue or diameter estimates) by means of the $\Gamma$-calculus (see e.g.\ 
\cite{LinYau,LMP,LiPe}),
the Bakry-\'Emery condition $CD(\kappa,d)$, in general, does not have such strong implications or is not applicable as in the diffusion setting, see e.g.\ \cite{Harvard} and \cite[Chapter 5]{Jn} concerning the Li-Yau inequality on graphs and discrete entropy methods, respectively.
These facts suggest that the classical Bakry-\'Emery CD inequality is not the natural condition in the
discrete setting, in particular in situations where the {\em logarithm} plays a crucial role like in Li-Yau inequalities, entropy or the logarithmic Sobolev inequality.

The problem of finding an appropriate substitute for the 
classical $\Gamma$-calculus in the discrete setting has stimulated a lot of research in the last decade and a half.   
Several notions of generalized Ricci curvature (lower bounds) for metric measure spaces including discrete spaces have been proposed and studied, see e.g.\ the recent book \cite{ModCur}. 

\subsection{Notions of Ricci curvature via optimal transport}
One important approach is based on geodesic convexity of the entropy. It relies on the crucial observation made in \cite{vRS} that on a Riemannian manifold $M$, the Ricci curvature is bounded from below by $\kappa\in \iR$ if and only if the Boltzmann
entropy $\cH(\rho)=\int_M \rho \log \rho\,d\mu_g$ is $\kappa$-convex along geodesics in the Wasserstein space
$P_2(M)$; we also refer to the earlier important contributions \cite{CEMcCS,McCann,OttoVill}. This characterization opened up
the synthetic theory of Ricci curvature bounds in geodesic metric measure spaces established independently by
Lott-Villani \cite{LottVill} and Sturm \cite{Stu1,Stu2}, see also the monograph \cite{Vil} and \cite{AGSInv,AGS14,AGS15,
EKS}.

However, this theory does not apply directly to discrete spaces,
since the $L^2$-Wasserstein space over a discrete space does not contain geodesics. This problem was solved for finite 
Markov chains by
Erbar and Maas \cite{EM12} and Mielke \cite{Mie} by replacing the $L^2$-Wasserstein metric  by a different
metric $\cW$, which had been introduced before by Maas \cite{Maa1} (see also Mielke \cite{Mie}). 
The metric $\cW$
is constructed in such a way that the heat flow associated with the Markov kernel of the chain is the gradient flow of 
the entropy with respect to the metric $\cW$, thereby obtaining a discrete analogue of the celebrated
Wasserstein gradient flow interpretation of the heat flow in $\iR^n$ by Jordan, Kinderlehrer and Otto \cite{JKO}.
It is shown that, together with the metric $\cW$, the space of probability measures on the finite state space $X$ becomes a geodesic space. Using this property, Erbar and Maas \cite{EM12} (see also Maas \cite{Maa1}) define lower Ricci curvature bounds by means of geodesic convexity of the entropy and prove a variety of discrete analogues of important results
from the continuous setting including that positive curvature implies the (modified) logarithmic Sobolev and Talagrand inequality 
as well as the Poincar\'e inequality. They also show that Ricci curvature bounds enjoy a tensorization principle and obtain,
as a special case, sharp Ricci curvature lower bounds for the discrete hypercube. Recently, various functional inequalities have also been proved under nonnegative discrete Ricci curvature and some suitable additional conditions (\cite{ErFa}).
By means of different methods,
discrete entropic Ricci curvature bounds have been derived for several examples, e.g.\ one-dimensional birth-death processes,
Bernoulli-Laplace models, zero-range processes and random transposition models, see \cite{EMT,FaMa,Mie}.
We also refer to the survey \cite{MaaSurvey} and the references given therein.

A different approach to adapt the Lott-Sturm-Villani theory to the discrete setting has been proposed by Bonciocat
and Sturm in \cite{BonStu} and is based on the notion of approximate midpoints w.r.t.\ the $L^2$-Wasserstein metric.
This notion is also studied in \cite{OllVill} in connection with a Brunn-Minkowski inequality on the discrete hypercube.

Another important notion of Ricci curvature including the discrete case has been introduced by Ollivier \cite{Oll}. This approach also relies on optimal transport and makes use of the $L^1$-Wasserstein metric. Positive Ollivier Ricci curvature also implies
certain functional inequalities. In contrast to the entropic bounds in \cite{EM12}, Ollivier's condition does not require
reversibility. As shown in \cite{Oll}, his criterion can be checked without much effort in many examples.
It does not seem to be known whether Ollivier's notion is connected to the entropic one.
For further studies of Ollivier's Ricci curvature in the graphical setting we refer to \cite{HJL,JoLiu,LinYau,MünWo}.

\subsection{CD inequalities in the context of discrete Li-Yau gradient estimates} 
Recently, motivated mainly by the problem of finding a discrete version of the celebrated Li-Yau inequality (\cite{LiYau}), various modifications of the classical Bakry-\'Emery CD condition have been introduced (\cite{Harvard,DKZ,Mn1}). 
We recall that positive solutions $u$ of the heat equation $\partial_t u-\Delta u=0$ on $(0,\infty)\times \iR^d$
satisfy the (sharp) Li-Yau inequality $-\Delta \log u\le \frac{d}{2t}$, which is equivalent to the {\em differential
Harnack inequality} 
\begin{equation} \label{CLY}
\partial_t(\log u)\ge |\nabla (\log u)|^2-\frac{d}{2t}\quad \mbox{in}\;(0,\infty)\times \iR^d.
\end{equation}
By integration along a suitable path in space-time, the gradient estimate \eqref{CLY} yields a sharp parabolic Harnack estimate.
Corresponding results hold true for the heat equation on a manifold with nonnegative Ricci curvature, see \cite{Li, LiYau}. 

First results concerning Li-Yau inequalities on graphs were obtained by Bauer, Horn, Lin, Lippner, Mangoubi, and Yau \cite{Harvard}.
Their key idea to circumvent the problem of the failure of the chain rule for the logarithm is to consider the {\em square root} instead of the logarithm and to use the identity
\begin{equation} \label{sqrt}
L \sqrt{f}=\frac{L f}{2\sqrt{f}}-\frac{\Gamma(\sqrt{f})}{\sqrt{f}},
\end{equation}
which still holds true for the graph Laplacian $L$. Furthermore, the authors of \cite{Harvard} introduce a new notion of CD-inequality, the so-called {\em exponential
curvature-dimension inequality} $CDE(\kappa,d)$, which means that for any vertex $x$ and any
positive function $f$ defined on the set of vertices $X$ satisfying $(L f)(x)<0$ one has
\begin{equation} \label{CDE}
\tilde{\Gamma}_2(f)(x):=\Gamma_2(f)(x)-\Gamma\Big(f,\frac{\Gamma(f)}{f}\Big)(x)\ge \frac{1}{d} (L f)(x)^2+\kappa\, \Gamma(f)(x).
\end{equation}
By means of \eqref{sqrt} and their new notion of CD-inequality the authors of \cite{Harvard},
using maximum principle arguments, obtain various Li-Yau type results for graphs. They also introduce
a different type of exponential CD-inequality, the condition $CDE'(\kappa,d)$, which holds if for all positive 
functions $f$ on $X$, 
\[
\tilde{\Gamma}_2(f)\ge \frac{1}{d} f^2(L \log f)^2+\kappa\, \Gamma(f).
\] 
In \cite{HLLY}, a family of Li-Yau type estimates similar to the ones in \cite{Harvard} 
(involving again the square root) are derived under the condition $CDE'(0,d)$, now using 
semigroup techniques as in \cite{BGL}. Moreover, volume doubling, the Poincar\'e inequality and Gaussian heat kernel estimates are proved for non-negatively curved graphs.

A few years later, M\"unch \cite{Mn1,Mn2} introduced a new calculus, called $\Gamma^\psi$-calculus, where $\psi$ is a concave function. Assuming that a {\em finite} graph satisfies the so-called $CD\psi(d,0)$-condition, M\"unch proves a Li-Yau estimate which for 
$\psi=\sqrt{}$ becomes one of those obtained in \cite{Harvard} and which in case $\psi=\log$
leads to the more natural logarithmic estimate
$-L \log u \le \frac{d}{2t}$, for positive solutions $u$ of the heat equation on the graph. M\"unch shows that 
in some examples (e.g.\ Ricci-flat graphs) the latter choice gives better estimates than the square-root approach.

Very recently, Dier, Kassmann and Zacher \cite{DKZ} proposed another CD inequality in the graphical setting, 
the condition $CD(F;0)$, which allows 
to significantly improve the Li-Yau estimates in \cite{Harvard,Mn1}. For example, they obtain a sharp logarithmic
Li-Yau estimate for the unweighted two-vertex graph and prove a Li-Yau inequality for the lattice $\tau \iZ$ with grid size $\tau$
which for $\tau\to 0$ leads to the classical sharp Li-Yau inequality on $\iR$.

One of the new key ideas in \cite{DKZ} is to replace the square $(L f)^2$ in the 
CD-inequality by a more general term of the form $F(-Lf)$, where $F$ is a so-called {\em CD-function}.
Their Li-Yau inequality for positive solutions $u$ of the heat equation on $[0,\infty)\times X$ (in case of a finite graph
satisfying $CD(F;0)$) takes the form 
$-L \log u \le \varphi(t)$, where the {\em relaxation function} $\varphi$ is the
unique strictly positive solution of the ODE 
$\dot{\varphi}(t)+F(\varphi(t))=0$
which has $(0,\infty)$ as its maximal interval of existence. In all examples studied in \cite{DKZ}, $\varphi$ exhibits 
a logarithmic behaviour near $t=0$ and thus is integrable at $t=0$ (in contrast to the $\frac{c}{t}$ estimates in
\cite{Harvard,Mn1}), which makes
it possible to deduce Harnack inequalities without time gap at $t=0$. 

\subsection{A discrete analogue of the $CD(\kappa,\infty)$ condition that fits to the entropy method}
The main purpose of the present paper is to identify a CD inequality in the discrete setting of Markov chains which
serves as a natural analogue of the classical condition $CD(\kappa,\infty)$ in several respects,
most notably with regard to the Bakry-\'Emery strategy to prove decay of the entropy (and the 
(modified) logarithmic Sobolev inequality) via computing and estimating the second time derivative of the entropy along the 
heat flow generated by the generator $L$ of the Markov chain.   

Before summarizing the main results, we describe the setting and some notation used in this paper. We consider a time-homogeneous Markov process $(Z_t)_{t\ge 0}$ on a finite or countable state space $X$, equipped 
with the maximal $\sigma$-algebra. The generator $L$ of the Markov process, defined for a suitable class of functions $f:\,X\to \iR$, is given by
\begin{equation}\label{def:generator}
Lf(x)=\sum_{y\in X}k(x,y)f(y)=\sum_{y\in X}k(x,y)\big(f(y)-f(x)\big),\quad x\in X,
\end{equation}
where $k(x,y)\ge 0$ is the transition rate for jumping from $x$ to $y$ ($\neq x$) and $\sum_{y\in X}k(x,y)=0$
for all $x\in X$, that is
$k(x,x)=-\sum_{y\in X\setminus \{x\}}k(x,y)<0$, $x\in X$. The Markov chain is called irreducible if for any $x,y \in X$ we find   $z_1,...,z_m \in X$, $m \geq 2$, with $x=z_1$ and $y=z_m$ such that $k(z_i,z_{i+1})>0$ for any $i \in \{1,...,m-1\}$. We denote the associated (Sub-)Markov semigroup on the space of bounded functions on $X$ by
${\bf P}=(P_t)_{t\ge 0}$, which is given by
\begin{equation}\label{semigroupconditionalExp}
P_t f (x) = \mathbb{E} \big( f(Z_t) | Z_0=x \big).
\end{equation}
 We recall that a $\sigma$-finite measure $\mu$ on $X$ is said to be invariant for the semigroup ${\bf P}$ 
if for any bounded function $f:\,X\to [0,\infty)$,
$\int_X P_t f\,d\mu=\int_X f\,d\mu$, $t\ge 0$.
In the recurrent case this is known to be equivalent to $\sum_{x \in X}\mu (\{x\})k(x,y)=0$. Furthermore, a $\sigma$-finite measure $\mu$ on $X$ is said to be reversible if the so-called {\em detailed balance condition}
\begin{equation}\label{detailedbalancecondition}
\mu(\{x\}) k(x,y)=\mu(\{y\})k(y,x),\quad x,y\in X,
\end{equation}
is satisfied. It is known that reversibility implies invariance. In particular, under reversibility $\int_X L f d\mu =0$ holds true on a suitable class of functions (e.g. on bounded functions if $\mu$ is finite).
For more details on the general theory of continuous-time Markov chains we refer to \cite{And} and \cite{Nor}.

The starting point in our search for suitable substitutes for $\Gamma$ and $\Gamma_2$ is the 
identity
\begin{align} \label{logchain}
L(\log f)=\frac{1}{f}\,L f-\Psi_\Upsilon (\log f)
\end{align}
for positive functions $f$, which has been observed in \cite{DKZ}. Here the operator $\Psi_\Upsilon$ is defined by 
\[
\Psi_\Upsilon(f)(x)=\sum_{y\in X}k(x,y) \Upsilon\big(f(y)-f(x)\big),\quad x\in X,
\]
with the function $\Upsilon(r)=e^r-1-r$, $r\in \iR$, see also Lemma \ref{PalmeFI} below.
Comparing \eqref{logchain} with the corresponding identity in the diffusion setting, one sees that 
$\Psi_\Upsilon (\log f)$ replaces $\Gamma(\log f)$ in the discrete setting. In other words, the quadratic
function $\frac{1}{2}r^2$, which represents the (discrete) $\Gamma$-operator, is replaced with the
function $\Upsilon(r)$, which interestingly for $r\to 0$ behaves like $\frac{1}{2}r^2$. 
The same phenomenon can be observed with the Fisher information $\cI(\rho)$ of a probability density $\rho$
w.r.t.\ to a fixed invariant and reversible probability measure $\mu$ on $X$. In fact, in the discrete case we have
$\cI(\rho)=\int_X \rho \Psi_\Upsilon(\log \rho)\,d\mu$, cf.\ Section 3 below.

Following the Bakry-\'Emery strategy, i.e.\ computing the second time derivative of the entropy, it turns out, as we show, that one can proceed further and, by means of detailed balance and certain
{\em algebraic identities for differences}, find a natural analogue of $\Gamma_2$, too, namely
\[
\Psi_{2,\Upsilon}(f)=\,\frac{1}{2}\,\big( L\Psi_\Upsilon(f)-B_{\Upsilon'}(f,Lf)\big),
\]
where 
\[
B_{\Upsilon'}(f,g)(x)=\sum_{y\in X}k(x,y) \Upsilon'\big(f(y)-f(x)\big)\big(g(y)-g(x)\big).
\]

Motivated by these findings, we introduce the CD condition $CD_\Upsilon(\kappa,\infty)$ with $\kappa\in \iR$ via the
inequality $\Psi_{2,\Upsilon}(f)\ge \kappa \Psi_\Upsilon(f)$ on $X$, $f \in \ell^\infty(X)$. That this notion is indeed a suitable discrete version of the classical $CD(\kappa,\infty)$
condition will be demonstrated in this paper by proving a variety of important results that parallel those in the diffusion setting.
In particular, we will show the following.
\begin{itemize}
\item $CD_\Upsilon(\kappa,\infty)$ with $\kappa>0$ implies the {\em modified logarithmic Sobolev inequality} with constant
$\kappa$, see Corollary \ref{mLSIoutofCDPalme}.
\item $CD_\Upsilon(\kappa,\infty)$ is characterized by the {\em gradient bounds} $\Psi_\Upsilon(P_t f) \leq e^{-2 \kappa t} P_t(\Psi_\Upsilon(f))$, cf.\ Remark \ref{gradbounds}.
\item Curvature bounds are preserved under {\em tensorization}, more precisely $CD(\kappa_i,\infty)$ for 
two chains ($i=1,2$) implies $CD(\kappa,\infty)$ with $\kappa=\min \kappa_i$ for the product chain
(Theorem \ref{theo:tensorproperty}).
\end{itemize}
See also the beginning of Section \ref{sec:basicsandCD} for the underlying basic assumptions.

Moreover, by a scaling argument, one can also see that $CD_\Upsilon(\kappa,\infty)$ implies $CD(\kappa,\infty)$
(Proposition \ref{CDPalmeandCD}). If the state space $X$ is {\em finite} and $CD(\kappa_0,\infty)$ and 
$CD_\Upsilon(\kappa,\infty)$ hold true with $\kappa_0>\kappa>0$, then for any $\varepsilon>0$ we will prove
that the entropy $\cH(P_t f)$ is bounded above by $C\exp(-2(\kappa_0-\varepsilon)t)$ for all $t\ge 0$, where the
(explicitly known) constant $C$ only depends on $\varepsilon$, $\kappa_0$, $\kappa$, the kernel $k$, the invariant measure
and the entropy of $f$, cf.\ Theorem \ref{improvedDecay}. So the rate of exponential convergence is 'almost' that given
by the Bakry-\'Emery constant $\kappa_0$. 

In Section \ref{sec:examplesection} we discuss several examples. We show positive curvature bounds with respect to the $CD_\Upsilon$ condition, e.g. for the two-point space, the  complete graph, the hypercube and some birth-death processes on $\N_0$. In case of the hypercube, we will further see that the $CD_\Upsilon$ condition yields the optimal constant in the resulting modified logarithmic Sobolev inequality. Moreover, we compare the $CD_\Upsilon$ condition with  Bakry-\'Emery curvature along many examples. While, for instance, both curvature notions coincide for the hypercube or complete bipartite graphs, the Poisson case of the birth-death process on $\N_0$ serves as an example with positive Bakry-\'Emery curvature for which $CD_\Upsilon(0,\infty)$ is best possible. We give a quite applicable condition to show $CD_\Upsilon(\kappa,\infty)$, with $\kappa>0$, for birth-death processes  which is on one hand stronger than the assumption of \cite{CaDP}, but improves on the other hand the constant in the corresponding modified logarithmic Sobolev inequality by a factor $2$. Further, we give a quite general criterion that ensures that  no (negative) lower curvature bound exists regarding the $CD_\Upsilon$ condition for a large class of examples among which are, for instance, unweighted trees that are no (finite or infinite) paths and have at least 5 vertices. This is in huge contrast to the Bakry-\'Emery case. The latter criterion allows to show that the $CD_\Upsilon$ condition is not robust with regard to small perturbations of the transition rates and, moreover, to characterize unweighted graphs with girth at least $5$ satisfying $CD_\Upsilon(0,\infty)$.

Another striking feature of the $CD_\Upsilon$ condition is its compatibility with the diffusion setting. To be more precise,
we will establish a {\em tensorization principle} for {\em hybrid processes}, that is, tensor products of a process in the diffusion setting
and a (discrete) Markov chain. If $CD(\kappa_c,\infty)$ holds for the continuous part and $CD_\Upsilon(\kappa_d,\infty)$
for the Markov chain, then for the product process we will derive the lower curvature bound $\kappa=\min\{\kappa_d,\kappa_c\}$
meaning that the inequality 
\[
(\Gamma \oplus \Psi_\Upsilon \big)_2(f) \ge (\Gamma \oplus \Psi_\Upsilon \big)(f)
\]
is satisfied for all $f$ in a sufficiently rich class of functions defined on the product space. Here 
$\Gamma \oplus \Psi_\Upsilon$ and $(\Gamma \oplus \Psi_\Upsilon \big)_2$ are natural analogues of $\Gamma$ and
$\Gamma_2$, respectively, in the hybrid continuous-discrete setting, see Section \ref{tensorhybrid}.
We will further show that positive curvature in the hybrid sense implies a corresponding modified logarithmic
Sobolev inequality and exponential decay of the entropy. As a possible application we discuss systems of linear
reaction-diffusion systems.

The logarithmic Sobolev and the Poincar\'e inequality are in some sense extreme cases of a more general family of
{\em convex Sobolev inequalities}, see \cite{AMTU, Jn}. Whereas in the diffusive setting, both inequalities follow
from $CD(\kappa,\infty)$ with $\kappa>0$, this does not seem to be known in the discrete setting for the
(modified) logarithmic Sobolev inequality; but it is true for the Poincar\'e inequality (see e.g. \cite[Section 3]{KRT}). Among those convex Sobolev inequalities, Beckner inequalities are also of high significance. Here the corresponding entropy is generated by
a power type function with exponent between $1$ and $2$. Having identified a natural CD condition 
which is sufficient for the modified logarithmic Sobolev inequality we will also determine such a CD condition 
for power type entropies and derive the corresponding Beckner inequalities, cf.\ Section \ref{BecknerSec}. In order to succeed, we follow again the Bakry-\'Emery strategy and derive a suitable representation for the second time derivative of the power type entropy along the heat flow.
 
As our approach to discrete CD inequalities is merely based on certain algebraic rules for differences, it can be naturally extended
to a large class of {\em non-local operators} in a continuous setting. We will illustrate this in Section \ref{NonlocalSec} for operators of the form  
\[
\mathcal{L}f(x)=  \int_{\Omega}\big (f(y)-f(x)\big) k(x,dy),\quad x\in \Omega,
\]
where $\Omega\subset \iR^d$ is a domain and the integral may possibly have to be understood in the principal value sense.
A prominent example for such operators is given by the fractional Laplacian on 
$\iR^d$, which satisfies the condition $CD_\Upsilon(0,\infty)$. 

\subsection{Links to other notions of (discrete) curvature bounds}
It turns out that there are quite a few close relations of the $CD_\Upsilon(\kappa,\infty)$ condition to other 
notions of (discrete) curvature and CD inequalities. Firstly, as we will prove, $CD_\Upsilon(\kappa,\infty)$ is equivalent to M\"unch's condition 
$CD\psi(\infty,\kappa)$ with $\psi=\log$, which is introduced in a side remark in \cite[Remark 3.12]{Mn1}
in the special case of the discrete Laplacian on a finite unweighted graph and which seemingly has not been used at all neither in \cite{Mn1} nor elsewhere
in the literature. Actually, at the same place, M\"unch also defines the condition $CD\psi(d,\kappa)$ with finite dimension parameter $d >0$, which for $\psi=\log$, is equivalent to the inequality
\begin{equation} \label{CDUpsgen}
\Psi_{2,\Upsilon}(f)\ge \frac{1}{d}\,(Lf)^2+\kappa \Psi_\Upsilon(f),
\end{equation}
which we will refer to as condition $CD_\Upsilon(\kappa,d)$, cf.\ Remark \ref{commentsCD}.
We emphasize that the equivalence of \eqref{CDUpsgen} and $CD\log(\infty,\kappa)$ is by no means easy to see
as the involved quantities in M\"unch's condition are defined in a highly implicit fashion, see
Remark \ref{commentsCD}(i),(ii) and Section \ref{gemischtes} for more details.

Further, condition $CD_\Upsilon(0,d)$ is also closely related to the condition $CD(F;0)$ by Dier, Kassmann and Zacher
\cite{DKZ} with a quadratic CD-function $F$. Instead of $\Psi_{2,\Upsilon}(f)(x)$, the authors of \cite{DKZ} use a sum of the latter term and an additional term which is nonnegative if one restricts the class of functions $f$ to those where $Lf$ has a 'local' minimum
at the vertex $x$, see Remark \ref{commentsCD}(iii) for the precise formulation. 

Note that these references show that the condition $CD_\Upsilon(0,d)$ (and more general versions with a CD-function
$F$ in the dimension term) is suitable to derive {\em Li-Yau and Harnack inequalities}. Thus, like the classical
Bakry-\'Emery $\Gamma$-calculus in the diffusive setting, the $CD_\Upsilon$-calculus is powerful enough to
obtain {\em both} modified logarithmic Sobolev inequalities and entropy decay as well as Li-Yau and Harnack inequalities.
To our knowledge, this is not known
for other approaches to (discrete) CD conditions in particular for those that rely on optimal transport. 

There is also an interesting connection to the generalized $\Gamma$-calculus which has been recently introduced
by Monmarch\'e in the context of {\em degenerate Markov processes} like e.g.\ {\em kinetic} Fokker-Planck diffusion in $\iR^{2d}$ and chains of interacting particles  (\cite[Section 2]{Mon}, see also \cite{Mon2}).
It is known that for such processes the classical methods based on the $\Gamma$-calculus do not work in the usual way. To overcome this problem, Villani \cite{VilH} proposed to study certain {\em distorted quantities} when the classical
quantities like entropy or variance do not behave well under the action of the semigroup, see also \cite{Bau}.
Monmarch\'e's basic strategy also follows the Bakry-\'Emery approach, but now allowing for a wider class of functionals
and operators, respectively. His motivation is to provide a general framework where the computations can be done
more easily. We will show that (in the discrete setting) for a special choice of Monmarch\'e's 
$\Phi$-operator, his general CD inequality amounts to the condition $CD_\Upsilon(\kappa,\infty)$, see Remark \ref{commentsCD}(iv). 
However, operators like $\Psi_\Upsilon$, $\Psi_{2,\Upsilon}$ or the discrete Boltzmann entropy
are apparently not the subject of the investigations in \cite{Mon, Mon2} as the author's focus lies on degenerate processes
in a continuous setting.

In the case of finite Markov chains, it is not clear whether there is a direct implication between the condition $CD_\Upsilon(\kappa,\infty)$ and the
notion of lower Ricci curvature bounds by Erbar and Maas \cite{EM12} and Mielke \cite{Mie} defined via geodesic convexity. But what we can say is that there is no $\alpha>0$ and $\beta >- \frac{\alpha}{2}$ such that the entropic Ricci curvature of \cite{EM12} with constant $\kappa>0$ does imply $CD_\Upsilon(\alpha \kappa + \beta,\infty)$, see Remark \ref{ErbarMaascompletegraph}.
Further, we know that both conditions, those of \cite{EM12} with constant $\kappa$ and $CD_\Upsilon(\kappa,\infty)$, imply the classical Bakry-\'Emery condition $CD(\kappa,\infty)$. Moreover, there is an interesting link
between both notions. Erbar and Maas introduce two functionals $\cA(\rho,\psi)$ and $\cB(\rho,\psi)$ for probability densities
$\rho$ on the state space $X$ and functions $\psi\in \iR^X$, cf.\ also Section \ref{gemischtes}. They show that the chain has curvature bounded from below by
$\kappa$ (in their sense) if and only if the inequality $\cB(\rho,\psi)\ge \kappa \cA(\rho,\psi)$ holds for all admissible
$\rho$ and $\psi$, see \cite[Theorem 4.5]{EM12}. It turns out that
\begin{equation*}
\cA(\rho,\log \rho)=\int_X \rho \Psi_{\Upsilon}(\log \rho)\,d\mu
\end{equation*}
and
\begin{equation*}
\cB(\rho,\log \rho)=\int_X \rho \Psi_{2,\Upsilon}(\log \rho)\,d\mu,
\end{equation*}
see Section \ref{gemischtes}. This shows that on one hand, the entropic condition is stronger with regard to the general
function $\psi$ instead of $\log \rho$. On the other hand, it is weaker than $CD_\Upsilon(\kappa,\infty)$ in the sense that
it is of integral form, whereas $CD_\Upsilon(\kappa,\infty)$ is a {\em pointwise condition}, which has to hold at any $x\in X$.

The paper is organized as follows. In Section \ref{sec:basicsandCD} we motivate and introduce the curvature-dimension condition $CD_\Upsilon(\kappa,\infty)$. We demonstrate that this notion fits well to the discrete entropy method and deduce modified logarithmic Sobolev inequalities or, equivalently, an entropy decay of exponential rate in Section \ref{sec:entrodecayandmLSI}. Afterwards, we show in Section \ref{tensandgrad} that a tensorization property holds for $CD_\Upsilon(\kappa,\infty)$. Section \ref{sec:examplesection} then illustrates the content with several examples. The remaining part of the paper is mainly concerned with related situations to what we have studied before. In Section \ref{BecknerSec}, we replace the Boltzmann entropy by power type entropies, introduce corresponding curvature-dimension conditions and deduce Beckner inequalities.  After that, in Section \ref{tensorhybrid}, we study hybrid processes, that is, products of processes satisfying the diffusion property and those with discrete state space. Section \ref{NonlocalSec} is devoted to an outlook how
the $CD_\Upsilon$-calculus can be extended to non-local operators in the space-continuous setting. Finally, we conclude with Section \ref{gemischtes}, where we provide more details on some links between the $CD_\Upsilon(\kappa,\infty)$ condition and other existing approaches in the literature.
\section{Setting, basic identities and CD inequalities}\label{sec:basicsandCD}
Let us begin this section by introducing our setting. We consider a time-homogeneous irreducible continuous-time Markov chain on a countable state space $X$ with unique (up to positive multiples) invariant measure $\mu$ whose generator is given by \eqref{def:generator}. Let $\pi: X \to (0,\infty)$ denote the density for $\mu$ with respect to the counting measure, i.e. $d\mu=\pi d \#$. We assume that the detailed balance condition \eqref{detailedbalancecondition} holds true.

Let $\R^X$ denote the space of real-valued functions on $X$. We say that $f \in \ell^p(\mu)$, $1\leq p < \infty$,  if $f \in \R^X$ is $p$-summable with respect to $\mu$ and $f \in \ell^\infty(X)$ if  $f \in \R^X$ is bounded. Further, $(P_t)_{t \geq 0}$ denotes the (Sub-)Markov semigroup given by \eqref{semigroupconditionalExp}.

Throughout this paper we assume that for any $x \in X $ 
\begin{equation}\label{ASSsumfinite}
M_1(x):=\sum_{y \in X \setminus \{x\}} k(x,y) <\infty
\end{equation}
and
\begin{equation}\label{ASSsecsumfinite}
M_2(x):=\sum_{y \in X \setminus \{x\}} k(x,y) \sum_{z \in X \setminus \{y\}} k(y,z) <\infty
\end{equation}
holds true.

In Section \ref{sec:entrodecayandmLSI}, \ref{BecknerSec} and \ref{tensorhybrid} we will assume that the Markov chain is positive recurrent, i.e.\ we assume in addition that $\mu$ is a probability measure and that stochastic completeness holds (that is $P_t \mathds{1} = \mathds{1}$) or in other words that the Markov chain is non-explosive. It is  known that the latter holds automatically whenever $\sup_{x \in X} M_1(x) < \infty$, see e.g.\ \cite{Nor}.

Due to the detailed balance condition, the generator of the Dirichlet form given by
\begin{equation}\label{Dirichletform}
\mathcal{E}(f,g) = \frac{1}{2}\sum_{x \in X}\sum_{y \in X} k(x,y) \big(f(y)-f(x)\big)\big(g(y)-g(x)\big) \pi(x)
\end{equation}
for $f,g$ lying in the form domain, coincides with $L$ given by \eqref{def:generator} on bounded functions which are contained in the domain of the form generator, see \cite{KeLe}. Therefore, we will also denote the $\ell^2(\mu)$ operator with $L$ in the sequel. The corresponding $\ell^2(\mu)$-semigroup generated by $L$ is an extension of the semigroup given by \eqref{semigroupconditionalExp} restricted to $\ell^\infty(X) \cap \ell^2(\mu)$. We will also use the notation $(P_t)_{t \geq 0}$ for the corresponding $\ell^2(\mu)$-semigroup. 

$L$ determines in a natural way a graph structure with vertex set $X$ and edge weights  given by $k(x,y)$ for $x,y \in X$, $x\neq y$. We say that $x,y$ are adjacent if $k(x,y)>0$. Note that irreducibility and the detailed balance condition together imply that $k(x,y)>0$ if and only if $k(y,x)>0$. We will refer to the described graph structure as the underlying or associated graph to $L$. If $k(x,y) \in \{0,1\}$ for any $x,y \in X$ with $x \neq y$, then the underlying graph to $L$ is an unweighted graph.

Now we recall a basic identity, which can be viewed as a kind 
of chain rule for the discrete operator $L$ and we refer to as the {\em (first)
fundamental identity}, cf.\ Section 2 in \cite{DKZ}. 
\begin{lemma} \label{lem:firstFI}
Let $\Omega\subset \iR$ be an open set and $f:\,X\to \iR$ such that the range of 
$f$ is contained in $\Omega$.
Let further $H\in C^1(\Omega;\iR)$ and  $f, H(f) \in \ell^1\big(k(x,\cdot)\big)$ for  $x \in X$. Then there holds
\begin{align} \label{FI1}
L\big( H(f)\big) (x)=H'(f(x)) L f(x)+\sum_{y\in X} 
k(x,y) \Lambda_H\big(f(y),f(x)\big)
\end{align}
where
\begin{align}\label{eq:def_Lambda}
\Lambda_H(w,z):=H(w)-H(z)-H'(z)(w-z),\quad w,z\in \iR.
\end{align}
\end{lemma}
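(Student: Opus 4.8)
The plan is to establish \eqref{FI1} by a direct algebraic rearrangement of the defining sum for $L$, the only genuine analytic content being the justification that every series in play converges absolutely, so that it may be split and recombined freely.

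First I would check that, under the standing assumption \eqref{ASSsumfinite} that $M_1(x)=\sum_{y\neq x}k(x,y)<\infty$ together with the hypotheses $f,H(f)\in\ell^1\big(k(x,\cdot)\big)$, each of the three terms in \eqref{FI1} is well defined. Indeed, $\sum_{y\neq x}k(x,y)|f(y)-f(x)|\le \sum_{y\neq x}k(x,y)|f(y)|+|f(x)|\,M_1(x)<\infty$, so $Lf(x)$ converges absolutely; running the same estimate with $H(f)$ in place of $f$ shows that $L\big(H(f)\big)(x)$ converges absolutely; and then $\sum_{y}k(x,y)\big|\Lambda_H\big(f(y),f(x)\big)\big|<\infty$ follows by the triangle inequality applied to the definition \eqref{eq:def_Lambda}.

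Next I would start from the sum on the right-hand side of \eqref{FI1}. Inserting $\Lambda_H\big(f(y),f(x)\big)=H(f(y))-H(f(x))-H'(f(x))\big(f(y)-f(x)\big)$ and splitting into two absolutely convergent pieces yields
\begin{align*}
\sum_{y\in X}k(x,y)\Lambda_H\big(f(y),f(x)\big) &=\sum_{y\in X}k(x,y)\big(H(f(y))-H(f(x))\big)\\
&\qquad -H'(f(x))\sum_{y\in X}k(x,y)\big(f(y)-f(x)\big).
\end{align*}
At this point I would invoke the defining property $\sum_{y\in X}k(x,y)=0$ (equivalently $k(x,x)=-M_1(x)$), which lets me identify the first sum with $L\big(H(f)\big)(x)$ and the second with $Lf(x)$ via the two equivalent forms in \eqref{def:generator}. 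Rearranging the resulting relation immediately produces \eqref{FI1}.

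I do not anticipate any real obstacle: the statement is nothing but a discrete first-order Taylor expansion of $H$ about the value $f(x)$, and the summand identity holds for every $y$ (in particular trivially for $y=x$, where both sides vanish). The sole point requiring care is the absolute convergence licensing the term-by-term splitting, and this is exactly what the integrability hypotheses $f,H(f)\in\ell^1\big(k(x,\cdot)\big)$ and the finiteness of $M_1(x)$ guarantee.
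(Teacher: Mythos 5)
Your proposal is correct and matches the paper's treatment: the paper dispenses with the proof as a ``straightforward computation'' (citing Lemma 2.1 of \cite{DKZ}), and that computation is exactly your rearrangement --- insert the definition \eqref{eq:def_Lambda}, split the sum, and use $\sum_{y\in X}k(x,y)=0$ to identify the pieces with $L\big(H(f)\big)(x)$ and $H'(f(x))Lf(x)$. Your added care about absolute convergence under the hypotheses $f,H(f)\in\ell^1\big(k(x,\cdot)\big)$ and \eqref{ASSsumfinite} is precisely the point the paper leaves implicit, so nothing is missing.
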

This follows from a straightforward computation, cf.\ \cite[Lemma 2.1]{DKZ}. In the case of a strictly convex function $H$, the quantity $\Lambda_H(w,z)$ is called {\em Bregman distance} associated with $H$ for the points $w$ and $z$. 
Note that identity \eqref{FI1} can be regarded
as an analogue in the discrete setting of the chain rule for Markov diffusion operators which reads
\begin{equation} \label{chainL}
LH(f)=H'(f)Lf+H''(f)\Gamma(f).
\end{equation}

It turns out that for several important examples of the function $H$, the second term on the right-hand side of \eqref{FI1} can be rewritten in a form that provides more insights into the structure of the term. To this purpose it is useful to define, for a
given function $H:\iR\to \iR$, the operator  
\begin{equation} \label{def:Psi}
\Psi_H (f)(x)=\,\sum_{y\in X}k(x,y) H\big(f(y)-f(x)\big),\quad 
x\in X,
\end{equation}
where $f \in \R^X$ is such that the latter is well defined.
Clearly, $\Psi_H(f)=Lf$ if $H$ is the identity. Observe as well that
in case of $\Omega=\iR$ and $H(y)=\frac{1}{2}y^2$ we have $\Psi_H( 
f)=\Gamma(f)$, and thus \eqref{FI1} leads to the well known identity
\begin{equation} \label{squareID}
\frac{1}{2}\,L(f^2) =f L f+\Gamma(f).
\end{equation}
The following identity plays a central role in our approach. It is a discrete version of the identity
\begin{equation} \label{logChain}
L(\log f)=\frac{1}{f}\,L f-\Gamma (\log f)
\end{equation}
from the diffusion setting and can be found in \cite[Example 2.4]{DKZ}.
\begin{lemma} \label{PalmeFI}
For positive functions $f\in \R^X$ such that $f, \log f \in \ell^1\big(k(x, \cdot)\big)$ for any $x \in X$, there holds
\begin{equation} \label{keyPalme}
L(\log f)=\frac{1}{f}\,L f-\Psi_\Upsilon (\log f),
\end{equation}
where
\[
\Upsilon(y):=e^y-1-y, \quad y\in \iR.
\]
\end{lemma}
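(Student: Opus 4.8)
The plan is to derive \eqref{keyPalme} directly from the first fundamental identity \eqref{FI1} by choosing $H=\log$ and computing the associated Bregman-type term $\Lambda_H$ explicitly. First I would apply Lemma \ref{lem:firstFI} with $\Omega=(0,\infty)$ and $H(y)=\log y$, which is legitimate since $f$ takes positive values and the integrability hypotheses $f,\log f\in\ell^1\big(k(x,\cdot)\big)$ are exactly what is needed for both $Lf(x)$ and $L(\log f)(x)$ to be well defined. With $H'(y)=1/y$, identity \eqref{FI1} gives
\begin{equation*}
L(\log f)(x)=\frac{1}{f(x)}\,Lf(x)+\sum_{y\in X}k(x,y)\,\Lambda_{\log}\big(f(y),f(x)\big).
\end{equation*}

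The key computational step is to identify the sum on the right with $-\Psi_\Upsilon(\log f)(x)$. Using the definition \eqref{eq:def_Lambda}, I would write
\begin{equation*}
\Lambda_{\log}\big(f(y),f(x)\big)=\log f(y)-\log f(x)-\frac{1}{f(x)}\big(f(y)-f(x)\big)=\log\frac{f(y)}{f(x)}-\Big(\frac{f(y)}{f(x)}-1\Big).
\end{equation*}
Now I would introduce the difference of the \emph{logarithm}, setting $r:=\log f(y)-\log f(x)$, so that $f(y)/f(x)=e^{r}$. Substituting yields $\Lambda_{\log}\big(f(y),f(x)\big)=r-(e^{r}-1)=-\big(e^{r}-1-r\big)=-\Upsilon(r)$. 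Since $r=\log f(y)-\log f(x)=(\log f)(y)-(\log f)(x)$, this is precisely $-\Upsilon\big((\log f)(y)-(\log f)(x)\big)$, and summing against $k(x,y)$ reproduces $-\Psi_\Upsilon(\log f)(x)$ by the definition \eqref{def:Psi} of $\Psi_H$ applied to $H=\Upsilon$ and the function $\log f$.

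I do not expect a serious obstacle here: the result is essentially an algebraic identity once the substitution $r=\log f(y)-\log f(x)$ is made, and it is no surprise that $\Upsilon(r)=e^{r}-1-r$ emerges, since $\Upsilon$ is designed to be exactly the Bregman distance of the exponential (equivalently, of the logarithm after the change of variables). The only point requiring a modicum of care is the applicability of Lemma \ref{lem:firstFI}, i.e.\ checking that the integrability assumption on $f$ and $\log f$ guarantees absolute convergence of the rearranged sum so that splitting off the linear term $\tfrac{1}{f(x)}Lf(x)$ is justified; this follows because $\Upsilon(r)\ge 0$ makes $\Psi_\Upsilon(\log f)(x)$ a well-defined (possibly finite by the stated hypotheses) nonnegative quantity, and the finiteness of $Lf(x)$ and $L(\log f)(x)$ then forces all terms to be finite and the manipulation to be valid.
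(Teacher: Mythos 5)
Your proposal is correct and follows essentially the same route as the paper: apply Lemma \ref{lem:firstFI} with $\Omega=(0,\infty)$, $H(y)=\log y$, and compute $\Lambda_{\log}\big(f(y),f(x)\big)=-\Upsilon\big(\log f(y)-\log f(x)\big)$ via the substitution $r=\log f(y)-\log f(x)$, exactly as in the paper's proof. Your added remarks on integrability and absolute convergence are a slightly more careful version of what the paper leaves implicit, but they introduce no new idea.
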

\begin{proof}
For the reader's convenience we repeat the short calculation, in which we apply Lemma \ref{lem:firstFI} with
$\Omega=(0,\infty)$ and $H(y)=\log y$. In this case we have
\[
\Lambda_H(w,z)=\log w-\log z-\frac{1}{z}\,(w-z)
= -\Upsilon(\log w-\log z),
\] 
and thus \eqref{keyPalme} follows from \eqref{FI1}.
\end{proof}
Given a  function $H:\iR\to \iR$, we next define the operator $B_H$ by  
\begin{equation} \label{def:BH}
B_H (f,g)(x)=\,\sum_{y\in X}k(x,y) H\big(f(y)-f(x)\big)\big(g(y)-g(x)\big),\quad 
x\in X,
\end{equation}
where $f,g \in \R^X$ are such that the latter is well defined. Provided that $H$ is continuous this is the case if $f,g \in \ell^\infty(X)$ by \eqref{ASSsumfinite}, as well as for the quite important  choice of $g=Lf$ with $f \in \ell^\infty(X)$ due to \eqref{ASSsecsumfinite}.
Observe further that $B_H(f,g)=2\Gamma(f,g)$ in case of $H$ being the identity. 

The next result provides a fundamental identity for the state space $X=\iZ^d$ in case of translation invariant kernels. 
\begin{lemma} \label{secondFI}
Let $d\in \iN$, $H\in C^1(\iR)$ and suppose that the kernel $k:\iZ^d\times \iZ^d\to \iR$ is of the form
\[
k(x,y)=k_*(y-x),\quad x,y\in \iZ^d,\,x\neq y,\quad k(x,x)=-\sum_{y\in \iZ^d\setminus\{0\}}k_*(y),\quad x\in \iZ^d,
\]
where $k_*$ is nonnegative on $\iZ^d\setminus\{0\}$ and $k_*\in \ell^1(\iZ^d\setminus\{0\})$. Let $L$ be the Markov generator associated with the kernel $k$,
the state space being $X=\iZ^d$, and let 
$f \in \ell^\infty(\iZ^d)$.
Then for all $x\in \iZ^d$,
\begin{align}
L\big(\Psi_H(f)&\big)(x)  =B_{H'}(f,Lf)(x) \label{FI2}\\
&  +\!\!\!\sum_{h,\sigma \in \iZ^d\setminus\{0\}} k_*(h) k_*(\sigma)
\Lambda_H\big(f(x+h+\sigma)-f(x+h),f(x+\sigma)-f(x)\big).\nonumber
\end{align}
In particular, if $H$ is convex,
\begin{equation} \label{FI2a}
L\big(\Psi_H(f)\big)(x)-B_{H'}(f,Lf)(x)\ge 0,\quad x\in \iZ^d. 
\end{equation}
\end{lemma}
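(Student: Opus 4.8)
The plan is to compute $L(\Psi_H(f))(x)$ directly from the definitions and to recognize the resulting double sum as a combination of the term $B_{H'}(f,Lf)(x)$ and a Bregman-type remainder built from $\Lambda_H$. First I would write out, using \eqref{def:generator} and \eqref{def:Psi}, the inner structure
\[
L\big(\Psi_H(f)\big)(x)=\sum_{\sigma\in\iZ^d\setminus\{0\}}k_*(\sigma)\Big(\Psi_H(f)(x+\sigma)-\Psi_H(f)(x)\Big)
=\sum_{\sigma,h}k_*(\sigma)k_*(h)\Big(H\big(\Delta_{h}f(x+\sigma)\big)-H\big(\Delta_h f(x)\big)\Big),
\]
where I abbreviate $\Delta_h f(x):=f(x+h)-f(x)$, and the sum runs over $h,\sigma\in\iZ^d\setminus\{0\}$. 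Here I have crucially used translation invariance: because $k(x+\sigma,x+\sigma+h)=k_*(h)$ is the \emph{same} kernel value regardless of the base point, the neighbours of $x+\sigma$ are parametrized by the same increments $h$ as the neighbours of $x$. This is exactly why the hypothesis $k(x,y)=k_*(y-x)$ is essential and where a general kernel would break the argument.

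Next I would insert the defining identity for the Bregman remainder, $\Lambda_H(w,z)=H(w)-H(z)-H'(z)(w-z)$, by adding and subtracting the first-order term. Writing $w=\Delta_h f(x+\sigma)$ and $z=\Delta_h f(x)$, the difference $H(w)-H(z)$ equals $\Lambda_H(w,z)+H'(z)(w-z)$. The remainder terms reassemble into precisely the $\Lambda_H$ double sum appearing in \eqref{FI2}, so the work reduces to showing that the first-order piece
\[
\sum_{\sigma,h}k_*(\sigma)k_*(h)\,H'\big(\Delta_h f(x)\big)\big(w-z\big),\qquad w-z=\Delta_h f(x+\sigma)-\Delta_h f(x),
\]
equals $B_{H'}(f,Lf)(x)$. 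The key algebraic observation is that $w-z=\big(f(x+\sigma+h)-f(x+h)\big)-\big(f(x+\sigma)-f(x)\big)=\Delta_\sigma f(x+h)-\Delta_\sigma f(x)$, i.e. it is a \emph{difference of increments}, and summing $k_*(\sigma)(w-z)$ over $\sigma$ yields $Lf(x+h)-Lf(x)=\Delta_h(Lf)(x)$. After performing the $\sigma$-summation, the $h$-sum becomes $\sum_h k_*(h)H'(\Delta_h f(x))\Delta_h(Lf)(x)$, which is exactly $B_{H'}(f,Lf)(x)$ by \eqref{def:BH} with $g=Lf$. I would note that all interchanges of summation are legitimate because $f\in\ell^\infty(\iZ^d)$ together with $k_*\in\ell^1$ and the second-moment assumption \eqref{ASSsecsumfinite} guarantee absolute convergence of the double sums.

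The remaining assertion \eqref{FI2a} is then immediate: if $H$ is convex then $\Lambda_H(w,z)\ge 0$ for all $w,z$ (it is the nonnegative gap in the tangent-line inequality), and since $k_*\ge 0$ every term in the $\Lambda_H$ double sum is nonnegative, so $L(\Psi_H(f))(x)-B_{H'}(f,Lf)(x)\ge 0$. The main obstacle, if any, is purely bookkeeping: keeping the two increment variables $h$ and $\sigma$ straight and correctly identifying the telescoping $\sigma$-sum that collapses the first-order term into $Lf$. I expect no analytic difficulty beyond justifying the rearrangement of the absolutely convergent double series.
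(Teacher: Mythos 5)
Your proof is correct and follows essentially the same route as the paper's: both expand $L\big(\Psi_H(f)\big)(x)$ as a double sum over increments using translation invariance and identify the difference with $B_{H'}(f,Lf)(x)$ via the definition of $\Lambda_H$ (the paper computes the two double sums separately and subtracts, while you add and subtract the tangent term inside a single sum --- the same algebra). The only cosmetic discrepancy is that your $\Lambda_H$-sum carries the arguments $\big(f(x+\sigma+h)-f(x+\sigma),\,f(x+h)-f(x)\big)$, i.e.\ with the roles of $h$ and $\sigma$ swapped relative to \eqref{FI2}, which is harmless since relabeling the dummy indices in the absolutely convergent double sum recovers the stated formula.
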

\begin{proof}
Using the special form of the kernel $k$ we have
\begin{align*}
L\big(\Psi_H(f)&\big)(x)=\sum_{h\in \iZ^d\setminus\{0\}}k_*(h)\big(\Psi_H(f)(x+h)-\Psi_H(f)(x)\big)\\
& =\sum_{h,\sigma\in \iZ^d\setminus\{0\}}k_*(h)k_*(\sigma)\big[H\big(f(x+h+\sigma)-f(x+h)\big)
-H\big( f(x+\sigma)-f(x)\big)\big]
\end{align*}
and
\begin{align*}
& B_{H'}(f,Lf)(x)=\sum_{h\in \iZ^d\setminus\{0\}}k_*(h) H'\big(f(x+h)-f(x)\big)\big(Lf(x+h)-Lf(x)\big)\\
&=	 \sum_{h,\sigma\in \iZ^d\setminus\{0\}}k_*(h)k_*(\sigma) H'\big(f(x+h)-f(x)\big)\big(f(x+h+\sigma)-f(x+h)-f(x+\sigma)+f(x)\big).
\end{align*}
Recalling the definition of $\Lambda_H$ (see \eqref{eq:def_Lambda}), identity \eqref{FI2} follows 
immediately from the last two relations. The second assertion is evident by positivity of $k_*$ and the definition of
$\Lambda_H$.
\end{proof}

It is interesting to note that identity \eqref{FI2} is structurally very similar to the fundamental identity
\eqref{FI1}. In contrast to \eqref{FI1}, the $\Lambda_H$-term in \eqref{FI2} involves a difference of second order. 
We also remark that \eqref{FI2} can be viewed as a generalized discrete version of Bochner's identity. In fact, taking
$H(y)=\frac{1}{2}y^2$ we have
\[ \Lambda_H(w,z)=\frac{1}{2}(w-z)^2, \quad \Psi_H(f)=\Gamma(f),\quad  
B_{H'}(f,g)=2\Gamma(f,g),
\]
and thus obtain 
\begin{align*}
& L \Gamma(f)(x) =L \Psi_H(f)(x)\\
& = B_{H'}(f, Lf)(x)+\!\!\!\sum_{h,\sigma \in \iZ^d\setminus\{0\}}\!\! k_*(h) k_*(\sigma)
\Lambda_H\big(f(x+h+\sigma)-f(x+h),f(x+\sigma)-f(x)\big)\\
& = 2\Gamma(f,Lf)(x)+\frac{1}{2}
\!\sum_{h,\sigma \in \iZ^d\setminus\{0\}}\!\! k_*(h) k_*(\sigma)
\big(f(x+h+\sigma)-f(x+h)-f(x+\sigma)+f(x)\big)^2.
\end{align*}
This is a discrete variant of Bochner's identity for the Laplacian $\Delta$ on $\iR^d$, which states that
\[
\Delta |\nabla f|^2=2 \nabla f\cdot \nabla\Delta f+2\sum_{i,j=1}^d (\partial_i\partial_j f)^2.
\] 
Since identity \eqref{FI2} plays an important role with regard to curvature-dimension inequalities for non-local
operators in the discrete setting we will also refer to it as {\em second fundamental identity}.    

The following identity is of crucial importance in the proof of Theorem \ref{ThmSecTDEnt} on the second time derivative
of the entropy.  
\begin{lemma} \label{BPalmeID}
There holds
\begin{equation} \label{BPalmeIDFormel}
\frac{1}{2}\,\int_X e^g B_{\Upsilon'}(g,h)\,d\mu=-\int_X e^g Lh\,d\mu.
\end{equation}
for any functions $g,h \in \R^X$ such that the expressions in \eqref{BPalmeIDFormel} are well defined.
\end{lemma}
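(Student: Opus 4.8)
The plan is to identify the left-hand side of \eqref{BPalmeIDFormel} with the Dirichlet form \eqref{Dirichletform} applied to the pair $(e^g,h)$ and then to integrate by parts by means of detailed balance. First I would insert the definition \eqref{def:BH} of $B_{\Upsilon'}$ and use that $\Upsilon'(r)=e^r-1$ (recall $\Upsilon(r)=e^r-1-r$), which produces the pointwise factorization
\[
e^{g(x)}\,\Upsilon'\big(g(y)-g(x)\big)=e^{g(x)}\big(e^{g(y)-g(x)}-1\big)=e^{g(y)}-e^{g(x)}.
\]
Writing $d\mu=\pi\,d\#$ and summing over $X$, this turns the left-hand side into
\[
\frac{1}{2}\int_X e^g B_{\Upsilon'}(g,h)\,d\mu=\frac{1}{2}\sum_{x\in X}\sum_{y\in X}\pi(x)k(x,y)\big(e^{g(y)}-e^{g(x)}\big)\big(h(y)-h(x)\big)=\mathcal{E}\big(e^g,h\big),
\]
where on the right $e^g$ denotes the function $x\mapsto e^{g(x)}$.

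Next I would invoke the detailed balance condition \eqref{detailedbalancecondition}, $\pi(x)k(x,y)=\pi(y)k(y,x)$, to carry out the standard symmetrization of the Dirichlet form. Expanding the product in the double sum and relabelling $x\leftrightarrow y$ in the two terms that carry the factor $e^{g(y)}$, the detailed balance identity shows that $\mathcal{E}(e^g,h)$ equals
\[
\sum_{x\in X}\sum_{y\in X}\pi(x)k(x,y)\,e^{g(x)}\big(h(x)-h(y)\big)=-\sum_{x\in X}\pi(x)\,e^{g(x)}\sum_{y\in X}k(x,y)\big(h(y)-h(x)\big).
\]
By the definition \eqref{def:generator} of the generator the inner sum is $Lh(x)$, so the right-hand side is $-\int_X e^g\,Lh\,d\mu$, which is precisely the asserted identity \eqref{BPalmeIDFormel}.

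The only point requiring genuine care is the legitimacy of the rearrangements of the double sums: both the interchange of the order of summation and the index swap $x\leftrightarrow y$ presuppose absolute convergence of the relevant series. This is exactly what the hypothesis that the expressions in \eqref{BPalmeIDFormel} be well defined is meant to supply, and in concrete situations it is guaranteed by the standing summability assumptions \eqref{ASSsumfinite} and \eqref{ASSsecsumfinite} (the latter controlling the term involving $Lh$). Once absolute convergence is granted, every step above is elementary, and I therefore expect the verification of absolute convergence — so that Fubini's theorem and the detailed balance symmetrization apply — to be the main, and essentially only, technical hurdle.
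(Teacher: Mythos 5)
Your proof is correct and takes essentially the same route as the paper's: both arguments insert $\Upsilon'(z)=e^z-1$ to obtain the factorization $e^{g(x)}\big(e^{g(y)-g(x)}-1\big)=e^{g(y)}-e^{g(x)}$ and then apply detailed balance (the index swap $x\leftrightarrow y$) to identify the resulting double sums with $-\int_X e^g\,Lh\,d\mu$. Your repackaging of the left-hand side as the Dirichlet form $\mathcal{E}(e^g,h)$ and your explicit remark on the absolute convergence needed for the rearrangements are sound but do not change the substance of the paper's computation.
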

\begin{proof}
Inserting $\Upsilon'(z)=e^z-1$ into the definition of the operator $B_{\Upsilon'}$ we obtain
\begin{align*}
\int_X e^g B_{\Upsilon'}(g,h)\,d\mu & = \sum_{x,y\in X}k(x,y) e^{g(x)}
\big(e^{g(y)-g(x)}-1\big) \big(h(y)-h(x)\big)\pi(x)\\
& = \sum_{x,y\in X}k(x,y) e^{g(y)} \big(h(y)-h(x)\big)\pi(x)-\sum_{x\in X} e^{g(x)} Lh(x)\pi(x).
\end{align*}
By detailed balance,
\begin{align*}
\sum_{x,y\in X}k(x,y) e^{g(y)} \big(h(y)-h(x)\big)\pi(x) & = \sum_{x,y\in X}k(y,x) e^{g(y)} \big(h(y)-h(x)\big)\pi(y)\\
& = \sum_{y\in X} e^{g(y)}\big(-Lh(y)\big)\pi(y),
\end{align*}
which proves the assertion.
\end{proof}
\begin{remark} \label{GamExpRem}
Identity \eqref{BPalmeIDFormel} can be viewed as discrete analogue of the formula
\begin{equation} \label{GammaExp}
\int_X e^g \Gamma\big(g,h\big)\,d\mu=-\int_X e^g Lh\,d\mu,
\end{equation}
which is valid in the diffusion setting (with $\mu$ being an invariant and reversible measure on $X$) and follows from the chain rule $\Gamma(H(g),h)=H'(g)\Gamma(g,h)$ for the carr\'e du champ
operator and the identity
\[
\int_X \Gamma(f,g)\,d\mu=-\int_X f Lg\,d\mu,
\]
see \cite[Section 3.4.1]{BGL}.
\end{remark}

Lemma \ref{PalmeFI} suggests to regard $\Psi_\Upsilon(f)$ as a suitable substitute for the carr\'e du champ operator $\Gamma(f)$.
Further, Lemma \ref{BPalmeID} shows that identity \eqref{GammaExp} has a discrete version where $\Gamma$ is replaced by
the operator $2B_{\Upsilon'}$. These two key observations are also reflected in the next definition, where 
we introduce the operator which will serve as appropriate substitute for the gamma deux $\Gamma_2(f)$ in our curvature-dimension condition.
\begin{defi} \label{Psi2Definition}
For any $f \in \ell^\infty(X)$, we define the operator
\begin{equation} \label{Psi2Def}
\Psi_{2,\Upsilon}(f)=\,\frac{1}{2}\,\big( L\Psi_\Upsilon(f)-B_{\Upsilon'}(f,Lf)\big).
\end{equation}
\end{defi}
We want to emphasize here the great structural similarity to the definition of $\Gamma_2(f)$, which is given by
\[
\Gamma_2(f)=\,\frac{1}{2}\,\big( L\Gamma(f)-2\Gamma(f,Lf)\big).
\]
To better understand the connection between both objects, let us introduce the more general operator
\begin{equation} \label{Psi2HDef}
\Psi_{2,H}(f)=\,\frac{1}{2}\,\big( L\Psi_H(f)-B_{H'}(f,Lf)\big),
\end{equation}
where $H\in C^1(\iR)$ is an arbitrary function and $f \in \ell^\infty(X)$. Now observe that the choice $H(y)=\frac{1}{2}y^2$ leads to the 
classical object, the gamma deux operator $\Gamma_2(f)$, whereas $H=\Upsilon$ gives the new object from
Definition \ref{Psi2Definition}. So the quadratic function is replaced with $H(y)=\Upsilon(y)=e^y-1-y$!  
\begin{defi}
The Markov generator $L$ is said to satisfy the curvature-dimension inequality 
$CD_\Upsilon(\kappa,\infty)$ at $x \in X$ for $\kappa\in \iR$, if for every function $f \in l^\infty(X)$  there holds
\begin{equation} \label{CDUpsDef}
\Psi_{2,\Upsilon}(f)(x)\ge \kappa \Psi_\Upsilon(f)(x).
\end{equation}
If $L$ satisfies $CD_\Upsilon(\kappa,\infty)$ at any $x \in X$, then we say that $L$ satisfies $CD_\Upsilon(\kappa,\infty)$.
\end{defi}
\begin{remark}\label{CDPalmefunctionsspace}(i) The choice of the suitable class of functions is a quite subtle question in the classical Bakry-\'Emery theory. In our case, the assumptions \eqref{ASSsumfinite} and \eqref{ASSsecsumfinite} ensure that the operators $\Psi_\Upsilon$ and $\Psi_{2,\Upsilon}$ are both well defined on $\ell^\infty(X)$. This is enough for the applications we have in mind, as we will see below. However, one may  choose alternative function spaces that possibly depend on the kernel.

(ii) If the associated graph to $L$ is locally finite, \eqref{CDUpsDef} holds true for any $f \in \R^X$ if and only it holds on $\ell^\infty(X)$. This is a consequence of the fact that for any $x \in X$ the expressions $\Psi_{2,\Upsilon} (f) (x)$ and $\Psi_\Upsilon(f)(x)$ only depend on finitely many values of $f$. 
\end{remark}
\begin{remark} \label{commentsCD}
 (i) In \cite[Remark 3.12]{Mn1}, M\"unch introduces the CD condition $CD\psi(d,K)$ for the discrete Laplacian on a finite
graph without weights, where $\psi$ is a concave function on $(0,\infty)$, $d>0$ is the dimension parameter and $K\in \iR$
the lower curvature bound. It turns out that for $\psi=\log$ the condition $CD\psi(\infty,\kappa)$ is equivalent to
$CD_\Upsilon(\kappa,\infty)$, see the proof in Section \ref{gemischtes}. This observation is by no means obvious, as the quantity $\Gamma_2^\psi$ appearing in $CD\psi(d,K)$ is defined in a rather implicit way. We point out that in \cite{Mn1},
the condition $CD\psi(d,K)$ with $K>0$ only crops up in Remark 3.12., in form of a definition. It is not used at all in
\cite{Mn1} as this paper focuses on Li-Yau inequalities, for which, assuming nonnegative curvature, the term involving the dimension parameter plays the decisive role. 

(ii) We can also define a more general $CD_\Upsilon(\kappa,d)$ condition with $\kappa\in \iR$ and a dimension 
parameter $d\in [1,\infty]$ via the inequality
\begin{equation} \label{CDUpsDefDim}
\Psi_{2,\Upsilon}(f)\ge \frac{1}{d}\,(Lf)^2+\kappa \Psi_\Upsilon(f).
\end{equation}
This condition is equivalent to $CD\psi(d,\kappa)$ from \cite{Mn1} with $\psi=\log$, cf.\ Section \ref{gemischtes} and the comments in  \cite{Mn1} concerning the case $\psi=\log$. The condition $CD_\Upsilon(\kappa,d)$ with $d<\infty$ is not so much in the focus
of this paper. It will appear in Remark \ref{limiting} on the relation to the classical Bakry-\'Emery condition and in
Remark \ref{RemarkTensorDim} in connection with a generalization of our main tensorization result.

(iii) The condition $CD_\Upsilon(0,d)$ is also closely linked to the condition $CD(F;0)$ with a quadratic CD-function $F$. These
notions were introduced in the very recent work \cite[Definitions 3.1 and 3.8]{DKZ}, which is also concerned with discrete
versions of the Li-Yau gradient estimate. The operator $\Psi_\Upsilon$ is also already used in \cite{DKZ}. Translating the situation
from \cite{DKZ}, where the authors consider a generalized graph Laplacian with vertex weights and symmetric edge weights on 
a locally finite graph, the condition $CD(F;0)$ at $x\in X$ with CD-function $F$ says that
\begin{equation} \label{CDFcond}
L\Psi_{\Upsilon'}(f)(x)\ge F\big(-Lf(x)\big)
\end{equation}
holds for any function $f:\,X\to \iR$ satisfying the conditions
\begin{equation} \label{CDFmaxcond}
Lf(x)<0\quad \mbox{and}\quad Lf(x)\le Lf(y)\quad \mbox{if}\;\;k(x,y)>0.
\end{equation}
 Using $\Upsilon'(z)=\Upsilon(z)+z$, $e^z=\Upsilon'(z)+1$, $\Psi_{Id}(f)=Lf$ and denoting the function with constant value $1$ by ${\bf 1}$ we have
\begin{align*}
L\Psi_{\Upsilon'}(f)(x)& = L\Psi_{\Upsilon}(f)(x)+L^2 f(x)\\
& = 2\Psi_{2,\Upsilon}(f)(x)+B_{\Upsilon'}(f, Lf)(x)+B_{{\bf 1}}(f,Lf)(x)\\
& =  2\Psi_{2,\Upsilon}(f)(x)+B_{\exp}(f, Lf)(x).
\end{align*}
This shows that under \eqref{CDFmaxcond} (which suffices for the Li-Yau inequality when using the maximum principle), it is advantageous to work with $L\Psi_{\Upsilon'}(f)(x)$ instead of 
$2\Psi_{2,\Upsilon}(f)(x)$, since one has the additional nonnegative term $B_{\exp}(f,Lf)(x)$. Employing this term and allowing 
also for nonquadratic CD-functions may lead to
significantly better (even sharp) Li-Yau estimates (\cite{DKZ}).

(iv) The condition $CD_\Upsilon(\kappa,\infty)$ is further closely connected to the generalized $\Gamma$-calculus described in Monmarch\'e \cite[Section 2]{Mon}. Here the author extends the classical $\Gamma$-calculus to study functional inequalities,
hypoellipticity and hypocoercivity for degenerate Markov processes like, e.g., generalized Ornstein-Uhlenbeck processes,
kinetic Fokker-Planck diffusion in $\iR^{2d}$ or chains of interacting particles.
We also refer to 
\cite[Section 4]{Mon2}, where Monmarch\'e already formulates the basic ingredients of his calculus
for Markov semigroups on the Euclidean space $\iR^d$. 

Assuming that the domain $D(L)$ of the generator $L$ contains a
core $\cA$ (which is also an algebra) and setting $\cA_+:=\{f\in \cA: \,f\ge 0\}$ he considers maps $\Phi:\,\cA_+\to \cA$ differentiable in the sense
that $\frac{1}{\tau}(\Phi(f+\tau g)-\Phi(f))(x)$ converges as $\tau\to 0$ to $(d\Phi(f).g)(x)$ (this is the 
notation used in \cite{Mon}) for all $x\in X$, $f,g\in \cA_+$,
where $d\Phi$ is the corresponding derivation operator. For $f\in \cA_+$ Monmarch\'e then defines the operator
\[
\Gamma_{L,\Phi}(f)=\frac{1}{2}\,\big(L\Phi(f)-d\Phi(f).Lf\big).
\]
For $\Phi(f)=f^2$ one obtains $\Gamma_{L,\Phi}(f)=\Gamma(f)$. With the choice $\Phi(f)=\Gamma(f)$ one recovers the $\Gamma_2$-operator. In this general framework, Monmarch\'e introduces the curvature condition $\Gamma_{L,\Phi}(f)\ge \kappa\Phi(f)$ with some $\kappa\in \iR$.

If we now take $\Phi(f)=\Psi_\Upsilon(f)$ in the discrete setting, we obtain that
\[
\frac{1}{\tau}(\Phi(f+\tau g)-\Phi(f))(x)\to B_{\Upsilon'}(f,g)(x) \quad \mbox{as}\;\tau\to 0,
\] 
under suitable conditions on $f,g$ and the kernel $k$, and consequently, $\Gamma_{L,\Phi}(f)=\Psi_{2,\Upsilon}(f)$ so that
Monmarch\'e's curvature condition is formally equivalent to $CD_\Upsilon(\kappa,\infty)$. 
Note that here, there is no reason to restrict $\Phi$ to nonnegative functions
as in \cite{Mon}.

We point out that objects like $\Psi_\Upsilon$, $\Psi_{2,\Upsilon}$ or the discrete Boltzmann entropy (see 
Section \ref{sec:entrodecayandmLSI} below) do not seem to appear at all in \cite{Mon}, \cite{Mon2}, as the focus of
the author does not lie on the pure discrete setting but mainly on degenerate problems in a continuous setting.     

(v) As already mentioned in the introduction, the operators $\Psi_\Upsilon$ and $\Psi_{2,\Upsilon}$ also appear implicitly  in the context of entropic Ricci curvature of Erbar and Maas in \cite{EM12}. More precisely, in Section \ref{gemischtes} we will demonstrate that
\begin{equation} \label{AFormel}
\cA(\rho,\log \rho)=\int_X \rho \Psi_{\Upsilon}(\log \rho)\,d\mu
\end{equation}
and
\begin{equation} \label{BFormel}
\cB(\rho,\log \rho)=\int_X \rho \Psi_{2,\Upsilon}(\log \rho)\,d\mu,
\end{equation}
hold respectively, where the functionals $\mathcal{A}$ and $\mathcal{B}$ play a key role in \cite{EM12}, see also Section \ref{gemischtes} for a definition.
\end{remark}
On suitable functions the Bakry-\'Emery condition is known to be equivalent to respective gradient bounds in the diffusive setting of \cite{BGL} as well as in the discrete setting, for instance, of \cite{LiLi} and \cite{HuLi}. The following remark indicates that we can characterize the $CD_\Upsilon$ condition in a similar spirit.
\begin{remark}\label{gradbounds}
Due to the work of \cite{Mon}, which we have discussed in Remark \ref{commentsCD}(iv),  \eqref{CDUpsDef} is equivalent to the gradient bound 
\begin{equation}\label{gradboundformula}
\Psi_\Upsilon(P_t f) \leq e^{-2 \kappa t} P_t(\Psi_\Upsilon(f)), \quad t>0,
\end{equation}
in a class of sufficiently good functions, cf. Lemma 1 in \cite{Mon}. We revisit the instructive argument on a formal level for the reader's convenience.

Let $t>0$ and $\varphi(s):= e^{-2\kappa s} P_s \big( \Psi_\Upsilon (P_{t-s}f)\big)$, $s \in [0,t]$. Then we have
\begin{align*}
\varphi'(s)= e^{-2 \kappa s} \Big( -2 \kappa P_s (\Psi_\Upsilon(P_{t-s}f)) + L P_s (\Psi_\Upsilon(P_{t-s}f)) + P_s \big(\frac{d}{d _s}\Psi_\Upsilon(P_{t-s}f)\big)\Big).
\end{align*}
Now, for any $\tau>0$
\begin{equation}\label{AbleitungPsiPalmealogheatflow}
\frac{d}{d \tau}\Psi_\Upsilon(P_{\tau}f)) =  B_{\Upsilon'}(P_{\tau}f,L P_{\tau}f)
\end{equation}
holds true and thus
\begin{align*}
\varphi'(s) = 2 e^{-2\kappa s} P_s \big( \Psi_{2,\Upsilon}(P_{t-s}f) - \kappa \Psi_\Upsilon(P_{t-s}f)\big).
\end{align*}
Thus, we infer from $CD_\Upsilon(\kappa,\infty)$ and the positivity of the semigroup that $\varphi$ is increasing and consequently we observe \eqref{gradboundformula} by
\begin{align*}
e^{-2\kappa t} P_t (\Psi_\Upsilon(f)) - \Psi_\Upsilon (P_t f) = \varphi(t) - \varphi(0)  \geq 0.
\end{align*}
Conversely, \eqref{gradboundformula} implies for $t>0$
\begin{align*}
\frac{G(t)-G(0)}{t}\geq 0,
\end{align*}
where $G(t)= e^{-2\kappa t} P_t (\Psi_\Upsilon(f)) -\Psi_\Upsilon(P_t f)$. Using \eqref{AbleitungPsiPalmealogheatflow}, we have further
\begin{equation*}
\frac{d}{d t} G(t) = e^{-2\kappa t} \big(-2\kappa P_t (\Psi_\Upsilon (f)) + L P_t (\Psi_\Upsilon (f))\big) - B_{\Upsilon'}(P_t f, L P_t f ).
\end{equation*}
In particular, we deduce from $G'(0) \geq 0$ the estimate 
\begin{equation*}
0 \leq -2 \kappa \Psi_\Upsilon (f)  +  L \Psi_\Upsilon (f) - B_{\Upsilon'}(f,Lf),
\end{equation*}
which yields $CD_\Upsilon(\kappa,\infty)$.

The above calculation is rigorous for any $f \in \R^X$ if $X$ is finite. In the general case, regularity assumptions on $f$ are needed, particularly to guarantee that the generator and the semigroup do commute in the above lines.
\end{remark}
\begin{proposition}\label{CDPalmeandCD}
If the Markov generator $L$ satisfies $CD_\Upsilon(\kappa,\infty)$, then $L$ also satisfies $CD(\kappa,\infty)$.
\end{proposition}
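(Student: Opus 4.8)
The plan is to exploit the fact that $\Upsilon(s)=e^s-1-s=\frac12 s^2+O(s^3)$ near $s=0$, so that rescaling a fixed bounded function $f$ by a small factor $\varepsilon>0$ turns the $\Upsilon$-operators back into the quadratic ($\Gamma$-)operators. Concretely, I would fix $x\in X$ and $f\in\ell^\infty(X)$, apply the pointwise inequality $CD_\Upsilon(\kappa,\infty)$ to the function $\varepsilon f\in\ell^\infty(X)$, namely
\[
\Psi_{2,\Upsilon}(\varepsilon f)(x)\geq \kappa\,\Psi_\Upsilon(\varepsilon f)(x),
\]
divide both sides by $\varepsilon^2$, and let $\varepsilon\to 0^+$.

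Next I would identify the two limits. Using $L(\varepsilon f)=\varepsilon Lf$ together with the expansions $\Upsilon(s)=\frac12 s^2+\sum_{n\geq 3}\frac{s^n}{n!}$ and $\Upsilon'(s)=e^s-1=s+\sum_{n\geq 2}\frac{s^n}{n!}$, each summand satisfies $\varepsilon^{-2}\Upsilon(\varepsilon(f(y)-f(x)))\to\frac12(f(y)-f(x))^2$ and $\varepsilon^{-1}\Upsilon'(\varepsilon(f(y)-f(x)))\to f(y)-f(x)$. Granting that the limit may be taken inside the defining sums, this gives
\[
\frac{1}{\varepsilon^2}\Psi_\Upsilon(\varepsilon f)(x)\longrightarrow \Gamma(f)(x),\qquad \frac{1}{\varepsilon^2}B_{\Upsilon'}(\varepsilon f,L(\varepsilon f))(x)\longrightarrow 2\Gamma(f,Lf)(x),
\]
and, since $L$ is linear and $\varepsilon^{-2}\Psi_\Upsilon(\varepsilon f)$ converges pointwise, also $\varepsilon^{-2}L\Psi_\Upsilon(\varepsilon f)(x)\to L\Gamma(f)(x)$. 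Recalling $\Psi_{2,\Upsilon}=\frac12(L\Psi_\Upsilon-B_{\Upsilon'}(\cdot,L\cdot))$ and $\Gamma_2=\frac12(L\Gamma-2\Gamma(\cdot,L\cdot))$, these combine to $\varepsilon^{-2}\Psi_{2,\Upsilon}(\varepsilon f)(x)\to\Gamma_2(f)(x)$. Passing to the limit in the rescaled inequality then yields $\Gamma_2(f)(x)\geq\kappa\Gamma(f)(x)$, i.e.\ $CD(\kappa,\infty)$.

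The main obstacle is precisely the justification of these interchanges of limit and (possibly infinite) summation, which is where the standing assumptions \eqref{ASSsumfinite} and \eqref{ASSsecsumfinite} come in. Writing $R:=2\|f\|_\infty$, for $0<\varepsilon\leq 1$ one has the uniform bounds $|\varepsilon^{-2}\Upsilon(\varepsilon r)|\leq\Upsilon(R)$ and $|\varepsilon^{-1}\Upsilon'(\varepsilon r)|\leq e^R-1$ whenever $|r|\leq R$. I would feed these into dominated convergence: for $\Psi_\Upsilon$ the dominating function is $y\mapsto\Upsilon(R)k(x,y)$, summable by $M_1(x)<\infty$; for $B_{\Upsilon'}$ the dominating function is $y\mapsto(e^R-1)k(x,y)|Lf(y)-Lf(x)|$, summable because $|Lf(y)|\leq R\,M_1(y)$ forces $\sum_y k(x,y)|Lf(y)-Lf(x)|\leq R(M_2(x)+M_1(x)^2)<\infty$; and for the outer sum defining $L\Psi_\Upsilon(\varepsilon f)(x)$ the pointwise bound $|\varepsilon^{-2}\Psi_\Upsilon(\varepsilon f)(z)|\leq\Upsilon(R)M_1(z)$ makes the summand dominated by $\Upsilon(R)k(x,z)(M_1(z)+M_1(x))$, again summable via $M_2(x)<\infty$. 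With these three dominating functions in hand the limits above are legitimate and the conclusion follows.
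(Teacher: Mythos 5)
Your proposal is correct and follows essentially the same route as the paper: both prove $CD(\kappa,\infty)$ by applying $CD_\Upsilon(\kappa,\infty)$ to the rescaled function and passing to the scaling limit, using that $\Upsilon(s)\sim\frac12 s^2$ as $s\to 0$ to recover $\Gamma$ and $\Gamma_2$ from $\Psi_\Upsilon$ and $\Psi_{2,\Upsilon}$, with the interchange of limit and summation justified by the standing assumptions \eqref{ASSsumfinite} and \eqref{ASSsecsumfinite}. The only (cosmetic) difference is that the paper computes the limits $\lim_{\lambda\to0}\Psi_\Upsilon(\lambda f)(x)/\lambda^2$ and $\lim_{\lambda\to0}\Psi_{2,\Upsilon}(\lambda f)(x)/\lambda^2$ via l'Hospital's rule, whereas you use the Taylor expansion directly together with explicit dominating functions — if anything, your domination bounds make the interchange arguments slightly more explicit than the paper's.
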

\begin{remark}
We remark that the statement of Proposition \ref{CDPalmeandCD} is implicitly contained in \cite{Mn1} in the special case of finite graphs without weights, cf.\ Remark \ref{commentsCD}(i) above. Since our setting is more general, we present a self-contained proof.
\end{remark}
\begin{proof}
Let $f \in \ell^\infty(X)$ and $x \in X$. In order to calculate $\lim\limits_{\lambda \to 0}\frac{\Psi_\Upsilon(\lambda f)(x)}{\lambda^2}$ and $\lim\limits_{\lambda \to 0}\frac{\Psi_{2,\Upsilon}(\lambda f) (x)}{\lambda^2}$ we apply l' Hospital's rule, respectively. We obtain
\begin{align*}
\frac{d}{d\lambda}\Psi_\Upsilon (\lambda f)(x) = \sum_{y \in X \setminus \{x\}} k(x,y) \Upsilon'\big(\lambda(f(y)-f(x))\big)(f(y)-f(x))
\end{align*}
and
\begin{equation}\label{lambdaderivativPsiPalme}
\frac{d^2}{d\lambda^2}\Psi_\Upsilon (\lambda f)(x) = \sum_{y \in X \setminus \{x\}} k(x,y) e^{\lambda (f(y)-f(x))}\big( f(y) - f(x) \big)^2.
\end{equation}
Hence, we have
\begin{align*}
\lim\limits_{\lambda \to 0}\frac{\Psi_\Upsilon(\lambda f)(x)}{\lambda^2} = \frac{1}{2} \sum_{y \in X \setminus \{x\}} k(x,y) \big( f(y) - f(x) \big)^2 = \Gamma(f)(x).
\end{align*}
Above, we were able to interchange the order of differentation and summation by \eqref{ASSsumfinite}. In the subsequent lines we use \eqref{ASSsecsumfinite} for that purpose. We obtain
\begin{align*}
\frac{d}{d\lambda} L \Psi_\Upsilon(\lambda f)(x) = \sum_{y \in X \setminus \{x \}} k(x,y) \frac{d}{d \lambda} \big(\Psi_\Upsilon(\lambda f) (y) - \Psi_\Upsilon(\lambda f)(x) \big)
\end{align*}
and by \eqref{lambdaderivativPsiPalme},
\begin{align*}
\frac{d^2}{d\lambda^2} L \Psi_\Upsilon(\lambda f)(x)
&= \sum_{y \in X \setminus \{x \}} k(x,y) \Big(\sum_{z \in X \setminus \{y \}} k(y,z) e^{\lambda(f(z)-f(y))} \big( f(z)-f(y)\big)^2 \\
&\qquad\qquad\qquad\qquad - \sum_{z \in X \setminus \{x \}}k(x,z) e^{\lambda(f(z)-f(x))} \big( f(z)-f(x)\big)^2 \Big).
\end{align*}
Consequently,
\begin{equation}\label{lambdaLPsiPalmelimit}
\lim\limits_{\lambda \to 0} \frac{L \Psi_\Upsilon(\lambda f)(x)}{\lambda^2} = \sum_{y \in X \setminus \{x\}} k(x,y) (\Gamma(f)(y)-\Gamma(f)(x)) = L \Gamma(f)(x).
\end{equation}
Further, by the product rule
\begin{align*}
\frac{d}{d\lambda}B_{\Upsilon'}(\lambda f, L(\lambda f))(x) &= \sum_{y \in X\setminus\{x\}}k(x,y) \Big( \Upsilon'\big(\lambda(f(y)-f(x))\big) \big(L f(y) - L f(x)\big) \\
&\qquad\qquad\qquad\qquad+ \lambda e^{\lambda(f(y)-f(x))}\big( L f(y) - L f(x) \big) \big( f(y) - f(x) \big) \Big)
\end{align*}
and thus
\begin{align*}
\frac{d^2}{d\lambda^2}B_{\Upsilon'}(\lambda f, L(\lambda f))(x) &= \sum_{y \in X \setminus \{x\}}k(x,y) \Big( 2 e^{\lambda(f(y)-f(x))} \big( L f(y)- L f(x) \big) \big( f(y) - f(x) \big) \\
&\qquad\qquad\qquad\qquad+ \lambda e^{\lambda(f(y)-f(x))} \big( L f(y)- L f(x) \big) \big( f(y) - f(x) \big)^2 \Big).
\end{align*}
We deduce
\begin{equation}\label{lambdaBPalmelimit}
\lim\limits_{\lambda \to 0} \frac{B_{\Upsilon'}(\lambda f, L (\lambda f))(x)}{\lambda^2} = 2 \Gamma(f,Lf)(x).
\end{equation}
Combining \eqref{lambdaLPsiPalmelimit} and \eqref{lambdaBPalmelimit}, yields
\begin{equation*}
\lim\limits_{\lambda \to 0} \frac{\Psi_{2,\Upsilon}(\lambda f) (x)}{\lambda^2} = \Gamma_2(f)(x).
\end{equation*}
Now, $CD_\Upsilon(\kappa,\infty)$ implies
\begin{align*}
\Gamma_2(f)(x) &= \lim\limits_{\lambda \to 0} \frac{\Psi_{2,\Upsilon}(\lambda f) (x)}{\lambda^2}\\
&\geq \kappa \lim\limits_{\lambda \to 0} \frac{\Psi_{\Upsilon}(\lambda f) (x)}{\lambda^2} = \kappa \Gamma(f)(x).
\end{align*}
\end{proof}
\begin{remark} \label{limiting}
(i) The proof of Proposition \ref{CDPalmeandCD} shows that the classical Bakry-\'Emery condition $CD(\kappa,\infty)$
is obtained from $CD_\Upsilon(\kappa,\infty)$ by taking the scaling limit at zero, the crucial property being that $\Upsilon(y)$
behaves like $\frac{1}{2}y^2$ as $y\to 0$. 

(ii) Since clearly $(L(\lambda f))^2=\lambda^2 (Lf)^2$, the statement of Proposition \ref{CDPalmeandCD} (and its proof)
extends to the finite dimension case, that is, $CD_\Upsilon(\kappa,d)$ (as defined in Remark \ref{commentsCD}(ii))
implies $CD(\kappa,d)$ for any $\kappa\in \iR$ and $d\in [1,\infty]$.
\end{remark}
We next want to derive estimates which allow us to compare $\Psi_\Upsilon(f)$ and $\Gamma(f)$ as well as
$\Psi_{2,\Upsilon}(f)$ and $\Gamma_2(f)$, respectively, in a quantitative way provided that $f$ has small values.
We begin with a useful representation of the operator $\Psi_{2,H}(f)$, which was introduced in \eqref{Psi2HDef}.
By definition, we have
\begin{align}
2\Psi_{2,H}&(f)(x)=L\Psi_H(f)(x)-B_{H'}(f,Lf)(x)\nonumber\\
& = \sum_{y\in X\setminus\{x\}} k(x,y) \Big( \Psi_H(f)(y)-\Psi_H(f)(x)-H'\big(f(y)-f(x)\big)\big(Lf(y)-Lf(x)\big)\Big)\nonumber\\
& = \sum_{y\in X\setminus\{x\}} k(x,y) \sum_{z\in X} k(y,z)\Big(H\big(f(z)-f(y)\big)-
H'\big(f(y)-f(x)\big)\big(f(z)-f(y)\big)\Big)\nonumber\\
& \quad\; +\sum_{y\in X\setminus\{x\}} k(x,y) H'\big(f(y)-f(x)\big)\sum_{z\in X} k(x,z)\big(f(z)-f(x)\big)\nonumber\\
& \quad\; - \sum_{y\in X\setminus\{x\}} k(x,y) \sum_{z\in X} k(x,z)H\big(f(z)-f(x)\big).\label{Psi2Formel}
\end{align}
For the choice of $H(r)=\frac{1}{2}r^2$, it follows easily from \eqref{Psi2Formel} that $CD(-M,\infty)$ holds provided that $\sup_{x \in X} M_1(x) \leq M < \infty$.  
Interestingly, this simple property is not satisfied in general by the $CD_\Upsilon$ condition, as will be demonstrated in Example \ref{ex:nolowerbound}.

Given $\varepsilon\in (0,\frac{1}{2})$ there exists a $\delta_\varepsilon>0$ such that
\begin{equation} \label{PalmeEstimate}
\frac{1-\varepsilon}{2}\,r^2\le \Upsilon(r)\le \frac{1+\varepsilon}{2}\,r^2,\quad r\in [-\delta_\varepsilon,\delta_\varepsilon].
\end{equation}
Suppose that $f\in \iR^X$ satisfies $2|f|_\infty\le \delta\le \delta_\varepsilon$. Applying \eqref{Psi2Formel} with $H=\Upsilon$
and using the shorter notation $f_x$ in place of $f(x)$ as well as the relation $\Upsilon'(r)=\Upsilon(r)+r$ we obtain
\begin{align*}
2\Psi_{2,\Upsilon}(f)(x) &=\!\!\!\sum_{y\in X\setminus\{x\}} k(x,y)\!\!\! \sum_{z\in X\setminus\{y\}} k(y,z)
\Big(\Upsilon(f_z-f_y)-\big[
(f_y-f_x)+\Upsilon(f_y-f_x)\big](f_z-f_y)\Big)\\
& \quad\; +\sum_{y\in X\setminus\{x\}} k(x,y) \big[(f_y-f_x)+\Upsilon(f_y-f_x)\big]
\sum_{z\in X} k(x,z)(f_z-f_x)\\
& \quad\; - \sum_{y\in X\setminus\{x\}} k(x,y) \sum_{z\in X} k(x,z)\Upsilon(f_z-f_x).
\end{align*}
Employing \eqref{PalmeEstimate}, Young's inequality, and formula \eqref{Psi2Formel} with $H(r)=\frac{1}{2}r^2$ we infer that
\begin{align*}
2\Psi_{2,\Upsilon}(f)(x) &\ge \!\!\!\sum_{y\in X\setminus\{x\}} k(x,y)\!\!\! \sum_{z\in X\setminus\{y\}} k(y,z)
\Big(\frac{1-\varepsilon}{2}(f_z-f_y)^2-(1-2\varepsilon)(f_y-f_x)(f_z-f_y)\\
& \quad\quad -2\varepsilon(f_y-f_x)(f_z-f_y)-\Upsilon(f_y-f_x) (f_z-f_y)
\Big)\\
& \quad\; +\sum_{y\in X\setminus\{x\}} k(x,y) \sum_{z\in X} k(x,z) \Big((1-2\varepsilon)(f_y-f_x)
(f_z-f_x)\\
& \quad\quad +2\varepsilon(f_y-f_x)(f_z-f_x)+\Upsilon(f_y-f_x)(f_z-f_x)\Big)\\
& \quad\; - \sum_{y\in X\setminus\{x\}} k(x,y) \sum_{z\in X} k(x,z)\Big(\frac{1-2\varepsilon}{2}(f_z-f_x)^2
+\frac{3\varepsilon}{2} (f_z-f_x)^2\Big)\\
&\ge 2 (1-2\varepsilon)\Gamma_2(f)\\
& \quad -\!\!\!\sum_{y\in X\setminus\{x\}}\! \!\!k(x,y)\!\!\! \sum_{z\in X\setminus\{y\}}\!\!\! k(y,z)
\Big(2\varepsilon(f_y-f_x)^2
+\frac{\delta(1+\varepsilon)}{2}(f_y-f_x)^2\Big)\\
& \quad -\!\!\!\sum_{y\in X\setminus\{x\}}\!\!\! k(x,y) \!\!\!\sum_{z\in X\setminus\{x\}}\!\!\! k(x,z)
\Big(\varepsilon(f_y-f_x)^2+\frac{5\varepsilon}{2}(f_z-f_x)^2
+\frac{\delta(1+\varepsilon)}{2}(f_y-f_x)^2\Big).
\end{align*}
Assuming that $\sum_{y\in X\setminus\{x\}}k(x,y)\le M$ for all $x\in X$, the last estimate gives
\begin{align*}
2\Psi_{2,\Upsilon}(f)(x) &\ge  2 (1-2\varepsilon)\Gamma_2(f)(x)
 -M\!\!\!\sum_{y\in X\setminus\{x\}}\!\!\! k(x,y)\big(3\varepsilon+\delta(1+\varepsilon)\big)(f_y-f_x)^2\\ 
& \quad -M\!\!\!\sum_{z\in X\setminus\{x\}}\!\!\! k(x,z) \frac{5\varepsilon}{2}\,(f_z-f_x)^2\\
& = 2 (1-2\varepsilon)\Gamma_2(f)(x)-M\big(11\varepsilon+2\delta(1+\varepsilon)\big)\Gamma(f)(x).
\end{align*}
In analogous fashion one can show that
\begin{align*}
2\Psi_{2,\Upsilon}(f)(x) &\le 
 2 (1+2\varepsilon)\Gamma_2(f)(x)+M\big(11\varepsilon+2\delta(1+\varepsilon)\big)\Gamma(f)(x).
\end{align*}

Furthermore,
\begin{align*}
(1-\varepsilon)\Gamma(f)(x) &=\sum_{y\in X\setminus\{x\}} k(x,y) \frac{1-\varepsilon}{2}\,(f_y-f_x)^2
\le \sum_{y\in X\setminus\{x\}} k(x,y) \Upsilon(f_y-f_x)\\
& \le \sum_{y\in X\setminus\{x\}} k(x,y) \frac{1+\varepsilon}{2}\,(f_y-f_x)^2=(1+\varepsilon)\Gamma(f)(x). 
\end{align*}

This proves the following proposition.
\begin{proposition} \label{GammaPsiCompare}
Let $\varepsilon\in(0,\frac{1}{2})$ and $\delta_\varepsilon>0$ be such that the two-sided quadratic estimate \eqref{PalmeEstimate} for the $\Upsilon$-function is satisfied. Suppose further that there is a constant $M>0$ such that
\begin{equation} \label{kernelM}
\sup_{x \in X} M_1(x) \le M .
\end{equation}
Then the following estimates hold for any $x\in X$ and $f\in \iR^X$ 
with $2|f|_\infty\le \delta\le \delta_\varepsilon$.
\begin{align} 
(1-\varepsilon)\Gamma(f)(x) & \le\Psi_\Upsilon(f)(x) \le \; (1+\varepsilon)\Gamma(f)(x), \label{PsiGamma1}
\end{align}
\begin{align}
 (1-2\varepsilon)\Gamma_2(f)(x)- & 6M(\varepsilon+\delta)\Gamma(f)(x)
 \le \Psi_{2,\Upsilon}(f)(x) \nonumber\\
 & \le  (1+2\varepsilon)\Gamma_2(f)(x)+6M(\varepsilon+\delta)\Gamma(f)(x). \label{PsiGamma2}
\end{align}
\end{proposition}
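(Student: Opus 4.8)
The plan is to prove both estimates by comparing the defining sums for $\Psi_\Upsilon(f)$ and $\Psi_{2,\Upsilon}(f)$ term by term against the corresponding sums for $\Gamma(f)$ and $\Gamma_2(f)$, exploiting that the smallness condition $2|f|_\infty \le \delta \le \delta_\varepsilon$ forces every difference $f(y)-f(x)$ to lie in $[-\delta_\varepsilon,\delta_\varepsilon]$, where the two-sided quadratic bound \eqref{PalmeEstimate} is available.

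The estimate \eqref{PsiGamma1} is the easy one. Since $\Psi_\Upsilon(f)(x)=\sum_{y} k(x,y)\,\Upsilon(f(y)-f(x))$ while $\Gamma(f)(x)=\tfrac12\sum_{y}k(x,y)(f(y)-f(x))^2$, and each argument $f(y)-f(x)$ already lies in $[-\delta_\varepsilon,\delta_\varepsilon]$, I would simply insert \eqref{PalmeEstimate} into every summand; nonnegativity of the weights $k(x,y)$ for $y\neq x$ then yields both inequalities at once, with no recourse to the row-sum bound $M$.

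For \eqref{PsiGamma2} I would start from the explicit representation \eqref{Psi2Formel} of $2\Psi_{2,H}(f)(x)$ specialized to $H=\Upsilon$. The key algebraic device is the identity $\Upsilon'(r)=\Upsilon(r)+r$, which lets me split every factor $\Upsilon'(f_y-f_x)$ into its linear part $(f_y-f_x)$ and the genuinely nonlinear remainder $\Upsilon(f_y-f_x)$. Grouping the linear parts together with the $\Upsilon(f_z-f_y)$ and $\Upsilon(f_z-f_x)$ terms and applying \eqref{PalmeEstimate} to the latter reproduces, up to factors of the form $1\pm 2\varepsilon$, precisely the representation \eqref{Psi2Formel} with $H(r)=\tfrac12 r^2$, which equals $2\Gamma_2(f)(x)$. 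This isolates the leading contribution as $2(1\pm 2\varepsilon)\Gamma_2(f)(x)$ and leaves a collection of remainder terms to be absorbed.

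The main obstacle is the control of these remainder terms, which come in two flavours: mixed products such as $(f_y-f_x)(f_z-f_y)$ created when the exact coefficients are split off from the $\tfrac{1\pm\varepsilon}{2}$-type ones, and products of the small factor $\Upsilon(f_y-f_x)$ with a difference $f_z-f_y$ or $f_z-f_x$. I would bound the mixed products by pure squares via Young's inequality, and estimate the $\Upsilon$-remainder products by means of \eqref{PalmeEstimate} together with $|f_y-f_x|\le\delta$, producing a factor of order $\delta$ times a square. At this point the error terms are still double sums of the form $\sum_{y}k(x,y)\sum_{z}k(\cdot,z)(\cdots)$; here the hypothesis \eqref{kernelM} is indispensable, since it allows me to pull out a factor $M$ and collapse the inner sum, reducing each double sum to a single sum $\sum_{y}k(x,y)(f_y-f_x)^2$, that is, to a multiple of $\Gamma(f)(x)$. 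Collecting all error contributions into one coefficient, which can be taken of the form $6M(\varepsilon+\delta)$, then gives the lower bound in \eqref{PsiGamma2}; the upper bound follows verbatim after interchanging the roles of the two one-sided estimates in \eqref{PalmeEstimate}.
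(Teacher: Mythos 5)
Your proposal is correct and follows essentially the same route as the paper: the bound \eqref{PsiGamma1} by termwise insertion of \eqref{PalmeEstimate}, and \eqref{PsiGamma2} by starting from the representation \eqref{Psi2Formel} with $H=\Upsilon$, splitting $\Upsilon'(r)=\Upsilon(r)+r$ to isolate the quadratic leading part as $(1\pm 2\varepsilon)$ times \eqref{Psi2Formel} with $H(r)=\tfrac12 r^2$, controlling mixed products via Young's inequality and the $\Upsilon$-remainders via $|f_y-f_x|\le\delta$, and finally using \eqref{kernelM} to collapse the double sums into multiples of $\Gamma(f)(x)$ with total error coefficient $6M(\varepsilon+\delta)$. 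This is exactly the paper's computation, so nothing further is needed.
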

From Proposition \ref{GammaPsiCompare} we immediately deduce the following result.
\begin{corollary}
Let $\varepsilon\in(0,\frac{1}{2})$ and $\delta_\varepsilon>0$ be such that the two-sided quadratic estimate \eqref{PalmeEstimate} for the $\Upsilon$-function is satisfied. Suppose further that \eqref{kernelM} holds and that
the Markov generator $L$ satisfies the condition $CD(\kappa,\infty)$ with some $\kappa>0$.
Then for any $x\in X$ and any $f\in \iR^X$ 
with $2|f|_\infty\le \delta\le \delta_\varepsilon$ there holds
\begin{equation} \label{PsiGamma2a}
 \Big( 1-2\varepsilon-\frac{6M}{\kappa}(\varepsilon+\delta)\Big)\Gamma_2(f)(x) \le 
\Psi_{2,\Upsilon}(f)(x) \le \Big( 1+2\varepsilon+\frac{6M}{\kappa}(\varepsilon+\delta)\Big)\Gamma_2(f)(x).
\end{equation}
\end{corollary}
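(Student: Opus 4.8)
The plan is to combine the two-sided estimate \eqref{PsiGamma2} from Proposition \ref{GammaPsiCompare} with the curvature hypothesis $CD(\kappa,\infty)$ so as to re-express the $\Gamma(f)(x)$ terms appearing there in terms of $\Gamma_2(f)(x)$ alone. First I would record the two elementary facts that drive the argument. Since $\Gamma(f)(x)=\tfrac12\sum_{y\neq x}k(x,y)\bigl(f(y)-f(x)\bigr)^2$ is a sum of squares weighted by the nonnegative rates, it satisfies $\Gamma(f)(x)\ge 0$; and $CD(\kappa,\infty)$ reads $\Gamma_2(f)(x)\ge \kappa\,\Gamma(f)(x)$ pointwise. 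With $\kappa>0$ these together yield both $\Gamma_2(f)(x)\ge 0$ and the comparison $\Gamma(f)(x)\le \tfrac{1}{\kappa}\,\Gamma_2(f)(x)$, which is exactly the substitution needed.

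For the upper bound I would start from the right-hand inequality in \eqref{PsiGamma2}, that is $\Psi_{2,\Upsilon}(f)(x)\le (1+2\varepsilon)\Gamma_2(f)(x)+6M(\varepsilon+\delta)\Gamma(f)(x)$, and replace $\Gamma(f)(x)$ by its upper bound $\tfrac{1}{\kappa}\Gamma_2(f)(x)$. As the coefficient $6M(\varepsilon+\delta)$ is positive, this step only enlarges the right-hand side, and collecting the $\Gamma_2(f)(x)$ terms produces the upper estimate of the corollary.

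The lower bound requires only a little care with signs. Starting from the left-hand inequality $\Psi_{2,\Upsilon}(f)(x)\ge (1-2\varepsilon)\Gamma_2(f)(x)-6M(\varepsilon+\delta)\Gamma(f)(x)$, I would again invoke $\Gamma(f)(x)\le \tfrac{1}{\kappa}\Gamma_2(f)(x)$; here it multiplies the negative coefficient $-6M(\varepsilon+\delta)$, so the inequality flips and, using once more that $\Gamma_2(f)(x)\ge 0$, one obtains $-6M(\varepsilon+\delta)\Gamma(f)(x)\ge -\tfrac{6M}{\kappa}(\varepsilon+\delta)\Gamma_2(f)(x)$. Substituting and regrouping gives the lower estimate $\bigl(1-2\varepsilon-\tfrac{6M}{\kappa}(\varepsilon+\delta)\bigr)\Gamma_2(f)(x)$, which finishes the proof. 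There is no genuine obstacle: the corollary is an immediate consequence of Proposition \ref{GammaPsiCompare} together with the nonnegativity of $\Gamma$, and the sole point warranting attention is preserving the correct direction of the inequality when the comparison $\Gamma(f)(x)\le \tfrac1\kappa\Gamma_2(f)(x)$ is inserted against a negative coefficient.
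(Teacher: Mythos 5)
Your proposal is correct and is exactly the argument the paper intends: the paper states that the corollary follows immediately from Proposition \ref{GammaPsiCompare}, and the intended deduction is precisely your substitution of $\Gamma(f)(x)\le \tfrac{1}{\kappa}\Gamma_2(f)(x)$ (from $CD(\kappa,\infty)$ with $\kappa>0$) into both sides of \eqref{PsiGamma2}, with the sign of the coefficient handled as you do.
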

We come back to the important case where $X=\iZ^d$ and $k$ is a translation invariant kernel. 
\begin{proposition} \label{Psi2FormelGrid}
Let $d\in \iN$ and suppose that the kernel $k:\iZ^d\times \iZ^d\to \iR$ is of the form described in Lemma 
\ref{secondFI}, that is $k(x,y)=k_*(y-x)$, $x,y\in \iZ^d$, $x\neq y$, where $k_*\in \ell^1(\iZ^d\setminus\{0\})$
is nonnegative. Let $f\in \ell^\infty(\Z^d)$ and $x\in \iZ^d$. Then
\begin{equation} \label{Gamma2grid}
\Gamma_2(f)(x)=\frac{1}{4}
\!\sum_{h,\sigma \in \iZ^d\setminus\{0\}}\!\!\!\! k_*(h) k_*(\sigma)
\big(f(x+h+\sigma)-f(x+h)-f(x+\sigma)+f(x)\big)^2
\end{equation}
and
\begin{equation} \label{Psi2Palmegrid}
\Psi_{2,\Upsilon}(f)(x)=\frac{1}{2}
\!\sum_{h,\sigma \in \iZ^d\setminus\{0\}}\!\!\!\! k_*(h) k_*(\sigma)e^{f(x+\sigma)-f(x)}
\Upsilon\big(f(x+h+\sigma)-f(x+h)-f(x+\sigma)+f(x)\big).
\end{equation}
In particular, the CD conditions $CD(0,\infty)$ and $CD_\Upsilon(0,\infty)$ are satisfied.
\end{proposition}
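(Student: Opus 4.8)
The plan is to read off both identities directly from the second fundamental identity (Lemma \ref{secondFI}), whose hypotheses are exactly those assumed here, namely $X=\iZ^d$ with a translation invariant kernel $k(x,y)=k_*(y-x)$ and $f\in\ell^\infty(\iZ^d)$. No new estimates are needed; everything reduces to specializing the function $H$ in \eqref{FI2} and performing one short algebraic simplification.

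For \eqref{Gamma2grid} I would specialize Lemma \ref{secondFI} to $H(y)=\frac12 y^2$, which is precisely the discrete Bochner identity recorded after that lemma. There one has $\Psi_H(f)=\Gamma(f)$ and $B_{H'}(f,Lf)=2\Gamma(f,Lf)$, so that $L\Gamma(f)(x)=2\Gamma(f,Lf)(x)+\frac12\sum_{h,\sigma}k_*(h)k_*(\sigma)\big(f(x+h+\sigma)-f(x+h)-f(x+\sigma)+f(x)\big)^2$. Substituting this into $\Gamma_2(f)=\frac12\big(L\Gamma(f)-2\Gamma(f,Lf)\big)$ cancels the $2\Gamma(f,Lf)$ term and leaves exactly the sum in \eqref{Gamma2grid}, now carrying the prefactor $\frac14$.

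For \eqref{Psi2Palmegrid} I would instead take $H=\Upsilon$ in Lemma \ref{secondFI}, obtaining $L\Psi_\Upsilon(f)(x)=B_{\Upsilon'}(f,Lf)(x)+\sum_{h,\sigma}k_*(h)k_*(\sigma)\Lambda_\Upsilon\big(f(x+h+\sigma)-f(x+h),f(x+\sigma)-f(x)\big)$. By Definition \ref{Psi2Definition} the term $B_{\Upsilon'}(f,Lf)$ is exactly what gets subtracted, so $\Psi_{2,\Upsilon}(f)(x)=\frac12\sum_{h,\sigma}k_*(h)k_*(\sigma)\Lambda_\Upsilon(\cdots)$. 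The one genuine computation is the algebraic identity $\Lambda_\Upsilon(w,z)=e^z\,\Upsilon(w-z)$: expanding $\Lambda_\Upsilon(w,z)=\Upsilon(w)-\Upsilon(z)-\Upsilon'(z)(w-z)$ with $\Upsilon(y)=e^y-1-y$ and $\Upsilon'(z)=e^z-1$, all terms linear in $w,z$ cancel and one is left with $e^w-e^z(1+w-z)=e^z\big(e^{w-z}-1-(w-z)\big)=e^z\,\Upsilon(w-z)$. Taking $w=f(x+h+\sigma)-f(x+h)$ and $z=f(x+\sigma)-f(x)$ yields $w-z=f(x+h+\sigma)-f(x+h)-f(x+\sigma)+f(x)$ and $e^z=e^{f(x+\sigma)-f(x)}$, which turns the sum into \eqref{Psi2Palmegrid}.

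Finally, the curvature statements are immediate from nonnegativity of the summands. In \eqref{Gamma2grid} each term is a square times $k_*(h)k_*(\sigma)\ge 0$, so $\Gamma_2(f)\ge 0$, i.e.\ $CD(0,\infty)$ holds. In \eqref{Psi2Palmegrid} one has $k_*(h)k_*(\sigma)\ge 0$, $e^{f(x+\sigma)-f(x)}>0$, and $\Upsilon\ge 0$ on $\iR$ by convexity of the exponential, whence $\Psi_{2,\Upsilon}(f)\ge 0$, i.e.\ $CD_\Upsilon(0,\infty)$ holds. I expect no real obstacle: the only step requiring care is the verification of $\Lambda_\Upsilon(w,z)=e^z\,\Upsilon(w-z)$, after which both formulas and both curvature bounds follow by direct bookkeeping.
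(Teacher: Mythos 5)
Your proposal is correct and follows essentially the same route as the paper: the paper likewise applies Lemma \ref{secondFI} with $H=\Upsilon$ and verifies the key identity $\Lambda_\Upsilon(w,z)=e^z\,\Upsilon(w-z)$ exactly as you do, while for \eqref{Gamma2grid} it simply cites the discussion after Lemma \ref{secondFI} (the $H(y)=\frac12 y^2$ specialization you spell out). Your explicit cancellation of the $2\Gamma(f,Lf)$ term and the nonnegativity argument for both CD conditions match the paper's reasoning in full.
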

\begin{proof}
The assertion on $\Gamma_2$ and the classical Bakry-\'Emery condition are well known in the literature. See also the 
discussion above after Lemma \ref{secondFI}. 

Concerning $\Psi_{2,\Upsilon}$ we apply Lemma \ref{secondFI} with $H=\Upsilon$ and use that
\begin{align*}
\Lambda_\Upsilon(w,z)\, & =\Upsilon(w)-\Upsilon(z)-\Upsilon'(z)(w-z)\\
& = \big(e^{w}-1-w\big)- \big(e^{z}-1-z\big)-\big(e^{z}-1\big)(w-z)\\
& = e^{w}-e^{z}-e^{z}(w-z)\\
& = \Lambda_{\exp}(w,z)= e^{z}\Upsilon(w-z).
\end{align*}
\end{proof}
\begin{remark}
The identity \eqref{Gamma2grid} has been used in the one-dimensional setting of $X=\Z$ in the quite recent work \cite{SWZ1}, where positive and negative results concerning the $CD(0,n)$ condition with $n<\infty$ were established. According to Remark \ref{commentsCD}(i) and Remark \ref{limiting}(ii), this is important in the context of Li-Yau inequalities since the $CD(0,n)$ condition is necessary for $CD_\Upsilon(0,n)$, which has been introduced in Remark \ref{commentsCD}(ii). However, as to curvature bounds, we observe that, formally, $CD(\kappa,\infty)$ (and thus $CD_\Upsilon(\kappa,\infty)$) can not be valid by plugging the identity function in \eqref{Gamma2grid}. See \cite[Theorem 6.1]{SWZ1} for a rigorous proof that applies to a large class of kernels using approximative arguments.
\end{remark}

\section{Entropy decay and modified log-Sobolev inequality}\label{sec:entrodecayandmLSI}
Recall that we assume throughout this section that the invariant and reversible measure $\mu$ is a probability measure on $X$ and $(P_t)_{t \geq 0}$ is a Markov semigroup. Also recall that $\pi$ denotes the (probability) density of $\mu$ with respect to the counting measure on $X$.  Let
\[
\cP(X):=\{\rho:\,X \to [0,\infty)\;\mbox{such that}\;\sum_{x\in X}\rho(x)\pi(x)=1\}
\]
denote the set of probability densities with respect to $\mu$ on $X$. Further, let $\mathcal{P}_*(X)$ denote the set of strictly positive probability densities with respect to $\mu$ and  $\mathcal{P}_*^+(X):= \mathcal{P}_*(X) \cap \ell^{\infty,+}(X)$, where
\begin{align*}
\ell^{\infty,+}(X)=\{f \in \ell^{\infty}(X): \exists c>0 \; \text{s.t.}\; f(x) \geq c > 0, \forall x \in X \}.
\end{align*}

We consider the entropy
\[
\cH (\rho)=\int_X \rho \log \rho\,d\mu= \sum_{x\in X} \rho(x) \log(\rho(x)) \pi(x),\quad \rho\in \cP(X),
\]
where we set $0 \log 0 = 0$ in this definition (and $\mathcal{H}(f)=\infty$ is allowed), and the discrete Fisher information
\begin{equation} \label{DefFisher}
\cI(\rho)=\int_X \rho \Psi_\Upsilon(\log \rho)\,d\mu=\sum_{x\in X} \rho(x) \Psi_\Upsilon(\log \rho)(x)\pi(x), \quad \rho \in \mathcal{P}_*^+(X).
\end{equation}
Using the identity
\[
a\Upsilon(\log b-\log a)=b-a-a(\log b-\log a),\quad a,b>0,
\]
and detailed balance we can rewrite the Fisher information as
\begin{align}
\cI(\rho) = & \sum_{x\in X} \rho(x) \sum_{y\in X} k(x,y) \Upsilon\big(\log \rho(y) -\log \rho(x)\big) \pi(x)\nonumber\\
= & \,\frac{1}{2}\,\sum_{x\in X} \rho(x) \sum_{y\in X} k(x,y) \Upsilon\big(\log \rho(y) -\log \rho(x)\big) \pi(x)\nonumber\\
& + \frac{1}{2}\,\sum_{y\in X} \rho(y) \sum_{x\in X} k(y,x) \Upsilon\big(\log \rho(x) -\log \rho(y)\big) \pi(y)\nonumber\\
= & \,\frac{1}{2}\,\sum_{x,y\in X}  k(x,y) \Big(\rho(y)-\rho(x)-\rho(x)\big(\log \rho(y) -\log \rho(x)\big)\Big) \pi(x)\nonumber\\
& + \frac{1}{2}\,\sum_{x,y\in X} k(y,x) \Big(\rho(x)-\rho(y)-\rho(y)\big(\log \rho(x) -\log \rho(y)\big)\Big) \pi(y)\nonumber\\
 = &  \,\frac{1}{2}\,\sum_{x,y\in X} k(x,y) \big(\rho(y)-\rho(x)\big)\big(\log \rho(y) -\log \rho(x)\big)\pi(x). \label{FisherUsual}
\end{align}
The representation \eqref{FisherUsual} for the Fisher information has been frequently used in the existing literature, see e.g.
\cite{EM12}, \cite[Section 5.1]{Jn}. Note also that $\mathcal{I}(\rho)=\mathcal{E}(\rho,\log \rho)$, where $\mathcal{E}$ is the Dirichlet form given by \eqref{Dirichletform}. In the sequel we use this relation to extend $\mathcal{I}$ to functions $\rho \in \mathcal{P}_*(X)$, where $\mathcal{I}(\rho)=\infty$ is allowed.
\begin{defi}\label{def:mLSI}
We say that $L$ satisfies the  modified logarithmic Sobolev inequality $\mathrm{MLSI}(\alpha)$ with $\alpha>0$, if
\begin{equation}\label{MLSI}
\mathcal{H}(f) \leq \frac{1}{2\alpha} \mathcal{I}(f)
\end{equation}
holds for any $f \in P_*(X)$ with $\mathcal{H}(f)<\infty$.
\end{defi}
It is a classical fact that exponential decay of the entropy
\begin{equation}\label{entropydecayequivtoMLSI}
\mathcal{H}(P_t f) \leq e^{-2\alpha t} \mathcal{H}(f),\quad t>0,
\end{equation}
is equivalent to \eqref{MLSI} on a suitable class of functions, see e.g. \cite{CaDP}. With regard to the latter we will work with the function space $\mathcal{P}_*^+(X)$.

\begin{lemma}\label{MLSIapproximation}
Assume that \eqref{MLSI} holds for $\alpha>0$ and any $f \in \mathcal{P}_*^+(X)$. Then $L$ satisfies $\mathrm{MLSI}(\alpha)$.
\end{lemma}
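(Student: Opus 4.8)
The plan is to deduce the inequality on all of $\mathcal{P}_*(X)$ from the hypothesis on $\mathcal{P}_*^+(X)$ by a truncation-and-normalisation argument. Since $\mathcal{I}(f)=\infty$ renders \eqref{MLSI} trivial, I would fix $f\in\mathcal{P}_*(X)$ with $\mathcal{H}(f)<\infty$ and $\mathcal{I}(f)<\infty$, and for integers $n\ge 2$ set $\phi_n(t)=\min\{\max\{t,\tfrac1n\},n\}$, $f_n:=\phi_n\circ f$, $Z_n:=\int_X f_n\,d\mu$ and $\tilde f_n:=f_n/Z_n$. Each $\tilde f_n$ is strictly positive, bounded, bounded below by a positive constant and integrates to $1$, so $\tilde f_n\in\mathcal{P}_*^+(X)$; the assumption therefore gives $\mathcal{H}(\tilde f_n)\le\frac{1}{2\alpha}\,\mathcal{I}(\tilde f_n)$. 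The proof then reduces to establishing $\mathcal{H}(\tilde f_n)\to\mathcal{H}(f)$ and $\mathcal{I}(\tilde f_n)\to\mathcal{I}(f)$ and passing to the limit.

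The main point, and the step I expect to be the real obstacle, is the behaviour of the Fisher information under truncation. Using the representation \eqref{FisherUsual} together with the scaling identity $\mathcal{I}(c\,g)=c\,\mathcal{I}(g)$ (the logarithmic differences are scale invariant), one has $\mathcal{I}(\tilde f_n)=Z_n^{-1}S_n$ with $S_n=\frac12\sum_{x,y}k(x,y)\big(f_n(y)-f_n(x)\big)\big(\log f_n(y)-\log f_n(x)\big)\pi(x)$. I would prove the pointwise contraction estimate that for all $a,b>0$,
\[
\big(\phi_n(a)-\phi_n(b)\big)\big(\log\phi_n(a)-\log\phi_n(b)\big)\le (a-b)(\log a-\log b).
\]
Assuming $a\ge b$, this follows by multiplying the two nonnegative inequalities $0\le\phi_n(a)-\phi_n(b)\le a-b$ (as $\phi_n$ is nondecreasing and $1$-Lipschitz) and $0\le\log\phi_n(a)-\log\phi_n(b)\le\log a-\log b$; the latter is precisely the statement that $t\mapsto\log t-\log\phi_n(t)$ is nondecreasing on $(0,\infty)$, which is immediate from the explicit three-region form of $\phi_n$. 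Consequently each summand of $S_n$ is nonnegative and dominated by the corresponding summand of $\mathcal{I}(f)$, and converges to it as $n\to\infty$ since $\phi_n(f(x))\to f(x)$ for every $x$ (here $f>0$). As the $f$-summands sum to $2\mathcal{I}(f)<\infty$, dominated convergence with respect to the counting measure yields $S_n\to\mathcal{I}(f)$, and with $Z_n\to1$ we obtain $\mathcal{I}(\tilde f_n)\to\mathcal{I}(f)$.

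The entropy convergence is more routine. From $\mathcal{H}(\tilde f_n)=Z_n^{-1}\int_X f_n\log f_n\,d\mu-\log Z_n$ it suffices to show $\int_X f_n\log f_n\,d\mu\to\mathcal{H}(f)$ and $Z_n\to1$. Since $0\le f_n\le f+1\in L^1(\mu)$ and $f_n\to f$ pointwise, dominated convergence gives $Z_n\to1$. For the entropy integral I would split $t\log t$ into positive and negative parts: the negative part is bounded below by $-1/e$, hence dominated by the constant $1/e\in L^1(\mu)$ as $\mu$ is finite, while $(f_n\log f_n)^+\le(f\log f)^+\in L^1(\mu)$, because $f_n(x)\in[1,f(x)]$ wherever $f(x)\ge1$ and $t\log t$ is increasing there, whereas $(f_n\log f_n)^+=0$ wherever $f(x)<1$; the integrability of $(f\log f)^+$ is equivalent to $\mathcal{H}(f)<\infty$. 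Dominated convergence then gives $\int_X f_n\log f_n\,d\mu\to\mathcal{H}(f)$, and together with $Z_n\to1$ we conclude $\mathcal{H}(\tilde f_n)\to\mathcal{H}(f)$. Letting $n\to\infty$ in $\mathcal{H}(\tilde f_n)\le\frac{1}{2\alpha}\mathcal{I}(\tilde f_n)$ yields \eqref{MLSI} for $f$ and finishes the proof, the only genuine difficulty being the truncation monotonicity of the Fisher information established above.
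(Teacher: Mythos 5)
Your proposal is correct and follows precisely the paper's own route: the paper proves the lemma by the same truncation-and-normalisation scheme (its $f_n$ with normalising constant $C_n$ is exactly your $\tilde f_n$ with $C_n=Z_n$), and simply asserts that $\mathcal{H}(f_n)\to\mathcal{H}(f)$ and $\mathcal{I}(f_n)\to\mathcal{I}(f)$ are ``straightforward to check''. Your pointwise contraction estimate $\bigl(\phi_n(a)-\phi_n(b)\bigr)\bigl(\log\phi_n(a)-\log\phi_n(b)\bigr)\le(a-b)(\log a-\log b)$ and the dominated-convergence arguments for $S_n$, $Z_n$ and the entropy integral supply, correctly, exactly the details the paper leaves out.
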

\begin{proof} 
Clearly, there is nothing to show in the case of $\mathcal{I}(f)=\infty$. If $\mathcal{I}(f)<\infty$ the claim can be established by a standard truncation argument. More precisely, one considers the sequence $(f_n)_{n \in \N} \subset P_*^+(X)$ given by $f_n(x)=\frac{f(x)}{C_n}$ if $\frac{1}{n}\leq f(x)\leq n$, $f_n(x)=\frac{1}{C_n n}$ if $f(x) > \frac{1}{n}$ and $f_n(x)=\frac{n}{C_n}$ if $f(x)>n$, where $C_n$ denotes the corresponding normalizing constant. Now, it is straightforward to check that $\mathcal{H}(f_n) \to \mathcal{H}(f)$ and $\mathcal{I}(f_n) \to \mathcal{I}(f)$ as $n\to \infty$.
\end{proof}
In the classical diffusive setting the modified logarithmic Sobolev inequality coincides with the logarithmic Sobolev inequality by means of classical chain rules. This is no longer true in the discrete setting, where the logarithmic Sobolev inequality is stronger than \eqref{MLSI} (cf. \cite[Lemma 2.7]{DSC96}). We refer to \cite{BoTe} and \cite{DSC96} for an extensive overview on these functional inequalities in the discrete setting of Markov chains.

As a motivation, we briefly outline the Bakry-\'Emery approach to establish the functional inequality \eqref{MLSI}. The key step is to achieve the differential inequality
\begin{equation}\label{diffineqasMotivation}
\frac{d^2}{dt^2} \mathcal{H}(P_t f) \geq - 2 \alpha \frac{d}{dt}\mathcal{H}(P_t f),
\end{equation}
which is established in the classical diffusive setting by means of the Bakry-\'Emery  condition $CD(\alpha,\infty)$. Starting from \eqref{diffineqasMotivation} the strategy is quite general. By Gronwall's lemma one deduces $-\frac{d}{dt}\mathcal{H}(P_t f)\leq - e^{-2\alpha t} \left.\frac{d}{dt}\mathcal{H}(P_t f)\right|_{t=0}$. Integrating this inequality, yields
\begin{equation*}
\mathcal{H}(f) - \mathcal{H}(P_t f) \leq - \left.\frac{d}{dt} \mathcal{H}(P_t f)\right|_{t = 0}\, \int_0^t e^{-2\alpha s} ds.
\end{equation*}
As $\frac{d}{dt}\mathcal{H}(P_t f) = - \mathcal{I}(P_t f)$ holds (see Proposition \ref{deBruijnidentity} below), \eqref{MLSI} follows by sending $t \to \infty$ if $\mathcal{H}(P_t f) \to 0$ as $t \to \infty$. If $f \in \mathcal{P}_*^+(X)$, the latter property follows from ergodicity of the Markov chain, the dominated convergence theorem and $(P_t)_{t \geq 0}$ being a Markov semigroup.

Note that this strategy not only applies to the entropy functional $\mathcal{H}$. Therefore we will use it not only in the present context to deduce Corollary \ref{mLSIoutofCDPalme}, but also in the setting of power-type entropies (Corollary \ref{BecwithCDp}) and those of hybrid processes (Corollary \ref{mLSIforhybridprocesses}).

In order to apply the pointwise condition \eqref{CDUpsDef}, we need to be able to interchange the order of differentation and integration in the strategy explained above. Concerning the  finite state-space case this is of course clear. For the general case, we will impose respective integrability conditions on the functions $M_1$ and $M_2$ (see \eqref{ASSsumfinite} and \eqref{ASSsecsumfinite}) such that the dominated convergence theorem applies, see Proposition \ref{deBruijnidentity}, Theorem \ref{ThmSecTDEnt}, Corollary \ref{mLSIoutofCDPalme}, Theorem \ref{theo:BecknerEntropiederivatives} and Corollary \ref{BecwithCDp}.

It is well known that the time-derivative of the entropy equals the
negative Fisher information along the heat flow associated with the operator $L$, see e.g.\ \cite{EM12}, \cite[Section 5.1]{Jn}.
\begin{proposition}\label{deBruijnidentity}
Let $M_1 \in \ell^1(\mu)$. Then for any $f\in \cP_*^+(X)$,
\begin{equation} \label{TimeDeriEntropy}
\frac{d}{dt}\,\cH(P_t f)=-\cI(P_t f),\quad t\ge 0
\end{equation}
holds true.
\end{proposition}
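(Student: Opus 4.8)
The plan is to carry out the standard de Bruijn--type computation, $\frac{d}{dt}\mathcal{H}(P_t f) = \int_X \partial_t\big(P_t f\,\log P_t f\big)\,d\mu$, and then to justify every interchange of limit with summation by means of uniform pointwise bounds on the heat flow together with the hypothesis $M_1\in\ell^1(\mu)$. First I would record the crucial a priori bounds. Since $f\in\mathcal{P}_*^+(X)$, there are constants $0<c\le C<\infty$ with $c\le f\le C$ on $X$. As $(P_t)_{t\ge 0}$ is a positivity preserving Markov semigroup with $P_t\mathds{1}=\mathds{1}$, monotonicity gives $c=P_t c\le P_t f\le P_t C=C$ for all $t\ge 0$; hence $c\le P_t f\le C$ uniformly in $t$, and consequently $|\log P_t f|\le\max\{|\log c|,|\log C|\}=:K_0$. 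In particular $P_t f\in\mathcal{P}_*^+(X)$ for every $t$, so $\mathcal{H}(P_t f)$ and $\mathcal{I}(P_t f)$ are both finite.

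Next I would differentiate under the sum. Writing $u=u(t):=P_t f$ and using the Kolmogorov equation $\partial_t u=Lu$, the integrand satisfies $\partial_t\big(u\log u\big)=(Lu)\big(1+\log u\big)$. The two-sided bound yields $|Lu(x)|\le\sum_{y}k(x,y)|u(y)-u(x)|\le(C-c)\,M_1(x)$ and $|1+\log u(x)|\le 1+K_0$, so the $t$-derivative of the integrand is dominated in absolute value by the fixed function $(1+K_0)(C-c)\,M_1$, which lies in $\ell^1(\mu)$ precisely by the assumption $M_1\in\ell^1(\mu)$. The dominated convergence theorem then permits interchanging $\frac{d}{dt}$ with $\int_X\cdots\,d\mu=\sum_x\cdots\,\pi(x)$, giving
\[
\frac{d}{dt}\mathcal{H}(P_t f)=\int_X (Lu)\,\log u\,d\mu+\int_X Lu\,d\mu.
\]

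Finally I would dispose of the two resulting terms via detailed balance. The absolute summability $\sum_{x,y}k(x,y)\pi(x)|u(y)-u(x)|\le(C-c)\,\|M_1\|_{\ell^1(\mu)}<\infty$ legitimizes Fubini, and the symmetrization $k(x,y)\pi(x)=k(y,x)\pi(y)$ yields both $\int_X Lu\,d\mu=0$ and the integration-by-parts identity $\int_X (Lu)\log u\,d\mu=-\mathcal{E}(u,\log u)=-\mathcal{I}(u)$, where the last equality is the relation $\mathcal{I}(\rho)=\mathcal{E}(\rho,\log\rho)$ recalled just before the statement (the same bounds show $\mathcal{I}(u)<\infty$). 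Combining these gives $\frac{d}{dt}\mathcal{H}(P_t f)=-\mathcal{I}(P_t f)$, as claimed.

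The main obstacle is purely technical: ensuring the formal manipulations are legitimate in the (possibly infinite) state-space setting. Everything hinges on the uniform bound $c\le P_t f\le C$, which renders $\log P_t f$ a harmless bounded factor, and on $M_1\in\ell^1(\mu)$, which supplies a single $t$-independent dominating function in $\ell^1(\mu)$ for both the differentiation under the sum and the Fubini/detailed-balance steps. I should also note that the pointwise identity $\partial_t P_t f=L P_t f$, used at the outset, is the Kolmogorov equation for the non-explosive chain and is standard (cf.\ \cite{Nor}).
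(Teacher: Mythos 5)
Your proof is correct, and it differs from the paper's argument only in how the key integration-by-parts step is carried out, plus in how much analytic justification is made explicit. After the common opening --- $\frac{d}{dt}\,\cH(P_t f)=\int_X (LP_t f)\log(P_t f)\,d\mu+\int_X LP_t f\,d\mu$, with the second integral vanishing by invariance --- the paper moves $L$ onto the logarithm using self-adjointness of $L$ and then invokes the fundamental identity of Lemma \ref{PalmeFI}, $L\log u=\frac{1}{u}\,Lu-\Psi_\Upsilon(\log u)$, so that a second application of invariance leaves exactly $-\int_X P_t f\,\Psi_\Upsilon(\log (P_t f))\,d\mu=-\cI(P_t f)$ in the form \eqref{DefFisher}; this route is deliberate, since the proposition is meant to display Lemma \ref{PalmeFI} as the discrete surrogate of the diffusion chain rule. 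You instead symmetrize directly via detailed balance to obtain $\int_X (LP_t f)\log(P_t f)\,d\mu=-\mathcal{E}(P_t f,\log P_t f)$ and then quote the relation $\cI(\rho)=\mathcal{E}(\rho,\log\rho)$; since the paper's own derivation \eqref{FisherUsual} of that relation is precisely the same detailed-balance symmetrization, the two routes are computationally equivalent, merely packaged differently. What your write-up adds --- and the paper leaves implicit, announcing only in the paragraph before the proposition that the integrability conditions are imposed so that the dominated convergence theorem applies --- is the actual justification of the formal steps: the uniform two-sided bound $c\le P_t f\le C$ (valid thanks to stochastic completeness, which is assumed throughout Section \ref{sec:entrodecayandmLSI}), the resulting $t$-independent dominating function $(1+K_0)(C-c)\,M_1\in\ell^1(\mu)$ for differentiating under the sum, and the absolute summability $\sum_{x,y}k(x,y)\pi(x)|P_tf(y)-P_tf(x)|\le (C-c)\,\|M_1\|_{\ell^1(\mu)}$ legitimizing Fubini and the detailed-balance swaps. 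That is exactly the intended role of the hypothesis $M_1\in\ell^1(\mu)$, so your version is, if anything, the more complete proof, while the paper's buys the structural analogy with the diffusive computation that motivates the whole $\Psi_\Upsilon$-calculus.
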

Even though this result is well known it is instructive to revisit the argument in terms of our notation. Since 
$\frac{d}{dt}P_t f=L P_t f$ we have
\begin{align*}
\frac{d}{dt}\,\cH(P_t f) & =\frac{d}{dt}\,\int_X P_t f \log\big(P_t f\big)\,d\mu\\
& = \int_X (LP_t f) \log\big(P_t f\big)\,d\mu+\int_X  LP_t f\,d\mu.
\end{align*}  
The last integral vanishes by invariance of $\mu$. Since $L=L^*$ w.r.t.\ $\mu$ (by reversibility) and applying Lemma \ref{PalmeFI}, the first integral
can be rewritten as
\begin{align}
 \int_X (LP_t f) \log\big(P_t f\big)\,d\mu &= \int_X P_t f L\log\big(P_t f\big)\,d\mu\nonumber\\
 & =  \int_X P_t f \Big(\frac{1}{P_t f}\,LP_t f-\Psi_\Upsilon\big(\log(P_t f)\big)\Big)\,d\mu\label{HI1}\\
 & = -\int_X P_t f \,\Psi_\Upsilon\big(\log(P_t f)\big)\,d\mu=-\cI(P_t f),\nonumber
\end{align}
where we use again invariance of $\mu$. This shows \eqref{TimeDeriEntropy}. 

Observe that the only difference of the previous proof to the one in the diffusion setting is the usage of the fundamental
identity from Lemma \ref{PalmeFI} in line \eqref{HI1}. In the diffusion setting, at this place, one employs the chain rule
\[
L\log\big(P_t f\big)=\frac{1}{P_t f}\,L P_t f-\Gamma\big(\log(P_t f)\big),
\] 
which then leads to the desired result as the Fisher information is defined as
\begin{equation} \label{FisherDiff}
\cI(\rho)=\int_X \rho \,\Gamma\big(\log \rho\big)\,d\mu=\int_X \frac{1}{\rho}\,\Gamma(\rho)\,d\mu.
\end{equation}

We turn now to derive an estimate for the second time derivative of the entropy, which is the key idea in the 
Bakry-\'Emery approach as described above. We will prove a representation formula for the second time derivative of the entropy which 
is a discrete version of the crucial identity
\begin{equation} \label{SecEntDiff}
\frac{d^2}{dt^2}\,\cH(P_t f) = 2\int_X P_t f \,\Gamma_2\big(\log (P_t f)\big)\,d\mu
\end{equation}
from the diffusion setting and is tailor-made for applying a $CD_\Upsilon$ condition.
We will also see that the whole line of arguments is a
natural discrete analogue of a suitably chosen proof of \eqref{SecEntDiff} in the diffusion setting. To make this explicit and to motivate our 
discrete computations we will first give such a proof in the diffusion setting and treat the discrete case afterwards.
The following argument is also interesting in its own, regardless of the question of adaptability to the discrete setting.
It does not seem to be included in standard references such as \cite{BGL} and \cite{Jn}, at least in the form we present here. 

The Fisher information being given by \eqref{FisherDiff}, we have in the diffusion setting
\begin{align*}
\frac{d^2}{dt^2}\,\cH(P_t f) & =-\frac{d}{dt}\,\cI(P_t f)=
-\frac{d}{dt}\,\int_X P_t f \,\Gamma\big(\log(P_t f)\big)\,d\mu\\
& = -\int_X  (LP_t f) \,\Gamma\big(\log(P_t f)\big)\,d\mu
-2\int_X P_t f \,\Gamma\Big(\log(P_t f),\frac{d}{dt}\,\log(P_t f)\Big)\,d\mu\\
& = -\int_X  P_t f \,L\Gamma\big(\log(P_t f)\big)\,d\mu
-2\int_X P_t f \,\Gamma\Big(\log(P_t f),\frac{1}{P_t f}LP_t f\Big)\,d\mu.
\end{align*} 
Inserting the identity
\[
\frac{1}{P_t f}\,L P_t f=L \log(P_t f)+\Gamma\big(\log(P_t f)\big),
\]    
cf. \eqref{logChain}, we obtain 
\begin{align*}
2\int_X P_t f \,\Gamma & \Big(\log(P_t f),\frac{1}{P_t f}LP_t f\Big)\,d\mu=
2\int_X P_t f \,\Gamma\big(\log(P_t f), L\log(P_t f)\big)\,d\mu\\
& +2\int_X P_t f \,\Gamma\Big(\log(P_t f), \Gamma\big(\log(P_t f)\big)\Big)\,d\mu.
\end{align*}
Applying to the last integral the identity \eqref{GammaExp}  with $g=\log(P_t f)$ and $h=\Gamma(\log(P_t f))$, 
we find that
\[
2\int_X P_t f \,\Gamma\Big(\log(P_t f), \Gamma\big(\log(P_t f)\big)\Big)\,d\mu
=-2 \int_X P_t f\, L\Gamma\big(\log(P_t f)\big)\,d\mu.
\]
Combining the previous relations yields
\begin{align*}
\frac{d^2}{dt^2}\,\cH(P_t f) & =\int_X P_t f\, L\Gamma\big(\log(P_t f)\big)\,d\mu-
2\int_X P_t f \,\Gamma\Big(\log(P_t f), L\log(P_t f)\Big)\,d\mu\\
& = 2\int_X P_t f \,\Gamma_2\big(\log (P_t f)\big)\,d\mu.
\end{align*}

In the discrete case, we have now the following fundamental result.
\begin{theorem} \label{ThmSecTDEnt}
Let $M_1 \in \ell^2(\mu)$ and $M_2 \in \ell^1(\mu)$. Then for any $f \in \mathcal{P}_*^+(X)$
\begin{equation} \label{SecondTimeDeriEntropy}
\frac{d^2}{dt^2}\,\cH(P_t f)=2\int_X P_t f \,\Psi_{2,\Upsilon}\big(\log (P_t f)\big)\,d\mu ,\quad t\ge 0
\end{equation} 
holds true.
\end{theorem}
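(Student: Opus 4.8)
The plan is to mirror, line by line, the diffusion-setting computation carried out just above the statement, replacing each continuous object by its discrete counterpart: $\Gamma$ by $\Psi_\Upsilon$, the logarithmic chain rule \eqref{logChain} by the fundamental identity \eqref{keyPalme}, the integration-by-parts formula \eqref{GammaExp} by Lemma \ref{BPalmeID}, and $\Gamma_2$ by $\Psi_{2,\Upsilon}$. Throughout I write $u=P_t f$, which under the standing assumptions of this section remains in $\mathcal{P}_*^+(X)$, so that $\log u$ is bounded and all operators applied below are well defined.

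First I would differentiate the de Bruijn identity from Proposition \ref{deBruijnidentity}, writing $\frac{d^2}{dt^2}\mathcal{H}(P_t f)=-\frac{d}{dt}\int_X u\,\Psi_\Upsilon(\log u)\,d\mu$ and applying the product rule under the integral sign. This produces two contributions. The first, $-\int_X (Lu)\,\Psi_\Upsilon(\log u)\,d\mu$, is turned by reversibility ($L=L^\ast$ in $\ell^2(\mu)$) into $-\int_X u\,L\Psi_\Upsilon(\log u)\,d\mu$. For the second I observe that, by the same computation underlying \eqref{AbleitungPsiPalmealogheatflow}, differentiating $\Psi_\Upsilon(\log u)$ in $t$ and using $\frac{d}{dt}\log u=\frac{1}{u}Lu$ gives $B_{\Upsilon'}\big(\log u,\tfrac{1}{u}Lu\big)$, so this contribution equals $-\int_X u\,B_{\Upsilon'}\big(\log u,\tfrac{1}{u}Lu\big)\,d\mu$.

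The decisive step is to rewrite $\frac{1}{u}Lu$ by means of the fundamental identity \eqref{keyPalme}, namely $\frac{1}{u}Lu=L\log u+\Psi_\Upsilon(\log u)$, and to exploit the linearity of $B_{\Upsilon'}$ in its second slot. This splits the second contribution into $-\int_X u\,B_{\Upsilon'}(\log u,L\log u)\,d\mu$ and $-\int_X u\,B_{\Upsilon'}(\log u,\Psi_\Upsilon(\log u))\,d\mu$. To the latter I apply Lemma \ref{BPalmeID} with $g=\log u$ (so $e^g=u$) and $h=\Psi_\Upsilon(\log u)$, which converts it into $+2\int_X u\,L\Psi_\Upsilon(\log u)\,d\mu$. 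Adding the three surviving integrals, the $L\Psi_\Upsilon(\log u)$ terms combine with net coefficient $+1$, leaving $\int_X u\big(L\Psi_\Upsilon(\log u)-B_{\Upsilon'}(\log u,L\log u)\big)\,d\mu$, which is precisely $2\int_X u\,\Psi_{2,\Upsilon}(\log u)\,d\mu$ by Definition \ref{Psi2Definition}. This is the claimed identity.

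I expect the main obstacle to be purely technical rather than structural: justifying the interchange of $\frac{d}{dt}$ and $\int_X$, as well as the rearrangements of the (a priori only conditionally convergent) double sums hidden in $L\Psi_\Upsilon$ and $B_{\Upsilon'}$. This is exactly what the hypotheses $M_1\in\ell^2(\mu)$ and $M_2\in\ell^1(\mu)$ are designed to control: they should furnish integrable dominating functions so that dominated convergence applies to the difference quotient defining $\frac{d}{dt}\Psi_\Upsilon(\log u)$, and so that Fubini's theorem legitimizes both the use of self-adjointness and the detailed-balance manipulation internal to Lemma \ref{BPalmeID}. On a finite state space these integrability issues evaporate and the algebra above constitutes the entire proof.
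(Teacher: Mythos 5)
Your proposal is correct and follows essentially the same route as the paper's own proof: differentiating the de Bruijn identity, using self-adjointness, the fundamental identity \eqref{keyPalme} together with linearity of $B_{\Upsilon'}$ in its second slot, and Lemma \ref{BPalmeID} with $g=\log(P_t f)$, $h=\Psi_\Upsilon(\log(P_t f))$, with the same bookkeeping of coefficients. Your closing remarks on dominated convergence and Fubini correctly identify the role of the hypotheses $M_1\in\ell^2(\mu)$, $M_2\in\ell^1(\mu)$, which the paper itself leaves implicit.
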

\begin{proof}
\begin{align*}
\frac{d^2}{dt^2}\,\cH(P_t f) & =-\frac{d}{dt}\,\cI(P_t f)=-\frac{d}{dt}\,\int_X P_t f \,\Psi_\Upsilon\big(\log(P_t f)\big)\,d\mu\\
& = -\int_X  (LP_t f) \,\Psi_\Upsilon\big(\log(P_t f)\big)\,d\mu
-\int_X P_t f \,B_{\Upsilon'}\Big(\log(P_t f),\frac{d}{dt}\,\log(P_t f)\Big)\,d\mu\\
& = -\int_X  P_t f \,L\Psi_\Upsilon\big(\log(P_t f)\big)\,d\mu
-\int_X P_t f \,B_{\Upsilon'}\Big(\log(P_t f),\frac{1}{P_t f}LP_t f\Big)\,d\mu
\end{align*}
Using the fundamental identity from Lemma \ref{PalmeFI} and the linearity of the operator $B_{\Upsilon'}$ in the second argument we obtain
\begin{align*}
\int_X P_t f \,B_{\Upsilon'} & \Big(\log(P_t f),\frac{1}{P_t f}LP_t f\Big)\,d\mu=
\int_X P_t f \,B_{\Upsilon'}\big(\log(P_t f), L\log(P_t f)\big)\,d\mu\\
& +\int_X P_t f \,B_{\Upsilon'}\Big(\log(P_t f), \Psi_\Upsilon\big(\log(P_t f)\big)\Big)\,d\mu.
\end{align*}
Invoking Lemma \ref{BPalmeID} with $g=\log(P_t f)$ and $h=\Psi_\Upsilon(\log(P_t f))$, the last integral can be rewritten as
\[
\int_X P_t f \,B_{\Upsilon'}\Big(\log(P_t f), \Psi_\Upsilon\big(\log(P_t f)\big)\Big)\,d\mu
=-2 \int_X P_t f\, L\Psi_\Upsilon\big(\log(P_t f)\big)\,d\mu.
\]
Combining the previous relations yields
\begin{align*}
\frac{d^2}{dt^2}\,\cH(P_t f) & =\int_X P_t f\, L\Psi_\Upsilon\big(\log(P_t f)\big)\,d\mu-
\int_X P_t f \,B_{\Upsilon'}\Big(\log(P_t f), L\log(P_t f)\Big)\,d\mu\\
& = 2\int_X P_t f \,\Psi_{2,\Upsilon}\big(\log (P_t f)\big)\,d\mu.
\end{align*}
This proves the theorem.
\end{proof}
From Theorem \ref{ThmSecTDEnt} and Lemma \ref{MLSIapproximation} we conclude the following result.
\begin{corollary}\label{mLSIoutofCDPalme}
If the Markov generator $L$ satisfies $CD_\Upsilon(\kappa,\infty)$, $\kappa>0$, $M_1 \in \ell^2(\mu)$ and $M_2 \in \ell^1(\mu)$, then $L$ satisfies $\mathrm{MLSI}(\kappa)$. In particular, the entropy decay estimate \eqref{entropydecayequivtoMLSI} (with $\alpha = \kappa$) holds true for any $f \in \mathcal{P}_*^+(X)$.
\end{corollary}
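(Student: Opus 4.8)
The plan is to run the Bakry-\'Emery argument outlined before Proposition \ref{deBruijnidentity}, now rigorously supported by Theorem \ref{ThmSecTDEnt}. First I would fix $f\in\mathcal{P}_*^+(X)$ and exploit that $P_t f>0$ for all $t\ge 0$: applying the pointwise inequality $CD_\Upsilon(\kappa,\infty)$ (i.e.\ \eqref{CDUpsDef}) to the function $\log(P_t f)$, multiplying by $P_t f\ge 0$ and integrating against $\mu$ gives
\[
2\int_X P_t f\,\Psi_{2,\Upsilon}\big(\log(P_t f)\big)\,d\mu\ge 2\kappa\int_X P_t f\,\Psi_\Upsilon\big(\log(P_t f)\big)\,d\mu=2\kappa\,\cI(P_t f).
\]
By Theorem \ref{ThmSecTDEnt} the left-hand side equals $\frac{d^2}{dt^2}\cH(P_t f)$, while Proposition \ref{deBruijnidentity} (applicable since $\mu$ is finite, so $M_1\in\ell^2(\mu)\subset\ell^1(\mu)$) identifies $\cI(P_t f)=-\frac{d}{dt}\cH(P_t f)$. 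Hence the differential inequality \eqref{diffineqasMotivation} holds with $\alpha=\kappa$.

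From here the argument is the standard Gronwall step. Writing $g(t):=\frac{d}{dt}\cH(P_t f)=-\cI(P_t f)\le 0$, the inequality reads $g'(t)+2\kappa g(t)\ge 0$, so $t\mapsto e^{2\kappa t}g(t)$ is nondecreasing and therefore $\cI(P_t f)\le e^{-2\kappa t}\cI(f)$. Integrating $-\frac{d}{ds}\cH(P_s f)=\cI(P_s f)$ over $[0,t]$ yields
\[
\cH(f)-\cH(P_t f)\le \frac{1-e^{-2\kappa t}}{2\kappa}\,\cI(f).
\]
Letting $t\to\infty$ then produces \eqref{MLSI} for $f\in\mathcal{P}_*^+(X)$, \emph{provided} $\cH(P_t f)\to 0$.

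Establishing this last limit is the only genuinely delicate point. For $f\in\mathcal{P}_*^+(X)$ there are constants $0<c\le C$ with $c\le f\le C$ on $X$, and since $(P_t)_{t\ge 0}$ is a Markov semigroup that preserves positivity and constants (stochastic completeness is assumed in this section) these bounds persist for $P_t f$ uniformly in $t$; ergodicity of the irreducible, positive recurrent chain gives the pointwise convergence $P_t f(x)\to\int_X f\,d\mu=1$. As $s\mapsto s\log s$ is bounded on $[c,C]$ and $\mu$ is a probability measure, dominated convergence yields $\cH(P_t f)\to\cH(\mathds{1})=0$. Combining this with the displayed estimate gives $\cH(f)\le\frac{1}{2\kappa}\cI(f)$, i.e.\ \eqref{MLSI} on $\mathcal{P}_*^+(X)$; Lemma \ref{MLSIapproximation} then upgrades this to $\mathrm{MLSI}(\kappa)$ on the full admissible class. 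Finally, the ``in particular'' assertion is immediate from the equivalence \eqref{entropydecayequivtoMLSI} between $\mathrm{MLSI}(\kappa)$ and exponential entropy decay recorded above. I expect the bookkeeping for the interchange of limits and the ergodic convergence (both tied to the restriction to the bounded, bounded-below class $\mathcal{P}_*^+(X)$) to be the main obstacle, whereas the differential-inequality and Gronwall steps are essentially formal once Theorem \ref{ThmSecTDEnt} is in hand.
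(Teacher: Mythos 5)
Your proposal is correct and follows essentially the same route as the paper, which proves Corollary \ref{mLSIoutofCDPalme} precisely by the entropy method outlined before Proposition \ref{deBruijnidentity}: the representation formula of Theorem \ref{ThmSecTDEnt} plus the pointwise $CD_\Upsilon(\kappa,\infty)$ inequality applied to $\log(P_t f)$ yields the differential inequality, Gronwall and integration give \eqref{MLSI} on $\mathcal{P}_*^+(X)$ once $\cH(P_t f)\to 0$ (from ergodicity, dominated convergence and $P_t\mathds{1}=\mathds{1}$), and Lemma \ref{MLSIapproximation} upgrades this to $\mathrm{MLSI}(\kappa)$. Your additional bookkeeping (the uniform bounds $c\le P_t f\le C$, which also guarantee $\log(P_t f)\in\ell^\infty(X)$ so that $CD_\Upsilon$ applies, and $M_1\in\ell^2(\mu)\subset\ell^1(\mu)$ for the finite measure $\mu$) is exactly what the paper leaves implicit.
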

Note that the $CD_\Upsilon(\kappa,\infty)$ condition has been applied to $\log(P_t f)$ in order to deduce Corollary \ref{mLSIoutofCDPalme}. However, due to ergodicity $\log(P_t f)$ is relatively small for large $t$, which motivates the following improvement of \eqref{entropydecayequivtoMLSI}.
\begin{theorem} \label{improvedDecay}
Let $X$ be finite and assume that the CD conditions $CD(\kappa_0,\infty)$ and $CD_\Upsilon(\kappa,\infty)$
are satisfied with $\kappa_0> \kappa>0$. Let $\pi_0=\min_{x\in X} \pi(x)$. Then for any 
$\varepsilon_0\in (0,1-\frac{\kappa}{\kappa_0})$ there exist
constants $h=h(\varepsilon_0,\kappa_0, |M_1|_\infty, \pi_0)>0$
and
$C=C(\varepsilon_0, \kappa_0, \kappa, |M_1|_\infty, \pi_0)>0$ 
such that
for any $f\in \mathcal{P}_*^+(X)$ the following holds true.

(i) If $\cH(f)\le h$ then
\begin{equation}  \label{entropyupgrade1}
\cH(P_t f)\le e^{-2(1-\varepsilon_0)\kappa_0 t} \cH(f),\quad t\ge 0.
\end{equation}

(ii) If $\cH(f)>h$ then
\begin{equation} \label{entropyupgrade2}
\cH(P_t f)\le C e^{-2(1-\varepsilon_0)\kappa_0 t}
\cH(f)^{\frac{\kappa_0}{\kappa}},\quad t\ge 0.
\end{equation}  
\end{theorem}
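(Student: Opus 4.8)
The plan is to exploit the fact that, near equilibrium, $\log(P_tf)$ is uniformly small, so that Proposition \ref{GammaPsiCompare} upgrades the weak curvature constant $\kappa$ to one arbitrarily close to the Bakry-\'Emery constant $\kappa_0$. First I would fix the scale of the ``small'' regime: since $X$ is finite and $\pi_0>0$, I write $\mathcal{H}(\rho)=\sum_{x\in X}\pi(x)\psi(\rho(x))$ with $\psi(s)=s\log s-s+1\ge 0$ (using $\int\rho\,d\mu=\mu(X)=1$), so that $\psi(\rho(x))\le \mathcal{H}(\rho)/\pi_0$ for every $x$. As $\psi$ is strictly convex with a strict minimum at $s=1$, smallness of $\psi(\rho(x))$ forces $\rho(x)\in[e^{-\delta/2},e^{\delta/2}]$; this produces a threshold $h=h(\delta,\pi_0)>0$ such that $\mathcal{H}(\rho)\le h$ implies $|\log\rho|_\infty\le \delta/2$.

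Next I would establish the improved pointwise curvature. Given $\varepsilon_0$, choose $\varepsilon\in(0,\frac{1}{2})$ and $\delta\le\delta_\varepsilon$ (so that \eqref{PalmeEstimate} holds), small enough and depending only on $\varepsilon_0,\kappa_0,|M_1|_\infty$, that
\[
(1-2\varepsilon)\kappa_0-6|M_1|_\infty(\varepsilon+\delta)\ge(1-\varepsilon_0)\kappa_0(1+\varepsilon),
\]
which is possible since the left-hand side tends to $\kappa_0>(1-\varepsilon_0)\kappa_0$ as $\varepsilon,\delta\to 0$. Then for any $g\in\R^X$ with $2|g|_\infty\le\delta$, the lower bound in \eqref{PsiGamma2} together with $CD(\kappa_0,\infty)$ (that is, $\Gamma_2(g)\ge\kappa_0\Gamma(g)$) and the estimate $\Gamma(g)\ge(1+\varepsilon)^{-1}\Psi_\Upsilon(g)$ from \eqref{PsiGamma1} gives the pointwise inequality $\Psi_{2,\Upsilon}(g)\ge(1-\varepsilon_0)\kappa_0\,\Psi_\Upsilon(g)$. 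Applying this with $g=\log(P_tf)$ inside Theorem \ref{ThmSecTDEnt} (whose integrability hypotheses are automatic for finite $X$) yields $\frac{d^2}{dt^2}\mathcal{H}(P_tf)\ge -2(1-\varepsilon_0)\kappa_0\frac{d}{dt}\mathcal{H}(P_tf)$ on any half-line where $|\log(P_tf)|_\infty\le\delta/2$; by monotonicity of the entropy this smallness, once achieved, persists for all later times, so the Bakry-\'Emery argument recalled before Proposition \ref{deBruijnidentity} self-improves this to $\frac{d}{dt}\mathcal{H}(P_tf)\le -2(1-\varepsilon_0)\kappa_0\,\mathcal{H}(P_tf)$ there.

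The two cases then follow. In case (i), $\mathcal{H}(f)\le h$ and monotonicity keep $\mathcal{H}(P_tf)\le h$, hence $|\log(P_tf)|_\infty\le\delta/2$, for all $t\ge 0$; the improved differential inequality holds globally and integrates to \eqref{entropyupgrade1}. In case (ii), let $t_*$ be the first time (finite, by ergodicity on the finite space $X$) with $\mathcal{H}(P_{t_*}f)=h$. On $[0,t_*]$ I use the weak rate from Corollary \ref{mLSIoutofCDPalme}, $\mathcal{H}(P_tf)\le e^{-2\kappa t}\mathcal{H}(f)$, which at $t=t_*$ gives $t_*\le\frac{1}{2\kappa}\log(\mathcal{H}(f)/h)$. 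For $t\ge t_*$ the improved rate from level $h$ yields $\mathcal{H}(P_tf)\le h\,e^{-2(1-\varepsilon_0)\kappa_0(t-t_*)}$; substituting the bound on $t_*$ and using $\mathcal{H}(f)/h>1$ with $(1-\varepsilon_0)\kappa_0<\kappa_0$ to raise the exponent produces \eqref{entropyupgrade2} with $C=h^{1-\kappa_0/\kappa}$. A short direct computation, using $(1-\varepsilon_0)\kappa_0>\kappa$ (equivalent to $\varepsilon_0<1-\kappa/\kappa_0$), shows that the same $C$ also covers the initial interval $t\le t_*$.

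The main obstacle is the bookkeeping of constants so that the final exponent on $\mathcal{H}(f)$ is exactly $\kappa_0/\kappa$ rather than $(1-\varepsilon_0)\kappa_0/\kappa$: this hinges on the base $\mathcal{H}(f)/h$ exceeding $1$, so that enlarging a positive exponent only enlarges the bound, and on the constraint $\varepsilon_0<1-\kappa/\kappa_0$, which guarantees $(1-\varepsilon_0)\kappa_0>\kappa$ and hence that the weak-rate estimate on $[0,t_*]$ is dominated by the same right-hand side. Checking that the single choice $C=h^{1-\kappa_0/\kappa}$ works simultaneously on both intervals, and that $h$ and $C$ depend only on the listed quantities $\varepsilon_0,\kappa_0,\kappa,|M_1|_\infty,\pi_0$, is the most delicate step.
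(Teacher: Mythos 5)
Your argument is correct --- in particular I checked that the single constant $C=h^{1-\kappa_0/\kappa}$ does cover both time intervals, using $\mathcal{H}(f)>h$ together with $\kappa<(1-\varepsilon_0)\kappa_0<\kappa_0$ --- and it follows the same two-regime architecture as the paper: upgrade the curvature to $(1-\varepsilon_0)\kappa_0$ in the regime where $\log(P_tf)$ is uniformly small by combining Proposition \ref{GammaPsiCompare} with $CD(\kappa_0,\infty)$, use the weak rate $e^{-2\kappa t}$ from Corollary \ref{mLSIoutofCDPalme} to control the time needed to enter that regime, and trade the resulting prefactor for the exponent $\kappa_0/\kappa$ on $\mathcal{H}(f)$. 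The implementation differs in two respects. First, the paper enters the small regime through the Csisz\'ar--Kullback--Pinsker inequality, which yields the pointwise bound $|P_tf(x)-1|\le \pi_0^{-1}e^{-\kappa t}\sqrt{2\mathcal{H}(f)}$ (see \eqref{Ptfestimate}) and hence an explicit waiting time $T_\varepsilon$ as in \eqref{Teps}; you instead use the pointwise entropy bound $\psi(\rho(x))\le \mathcal{H}(\rho)/\pi_0$ with $\psi(s)=s\log s-s+1$, a stopping time $t_*$ at entropy level $h$, and the monotonicity of $t\mapsto\mathcal{H}(P_tf)$ to make the smallness persist. Your route is slightly cleaner, as it avoids Pinsker and the Lipschitz constant $c$ of $\log$ near $1$; just note that $\psi(s)\to 1$ (not $\infty$) as $s\to 0^+$, so the implication ``$\psi(\rho(x))$ small $\Rightarrow$ $\rho(x)\in[e^{-\delta/2},e^{\delta/2}]$'' rests on the monotonicity of $\psi$ on either side of its minimum, i.e.\ one should take $h=\pi_0\min\{\psi(e^{-\delta/2}),\psi(e^{\delta/2})\}$, which your construction effectively does. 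Second, the constants agree exactly: writing $\lambda=(1-\varepsilon_0)\kappa_0$, the paper's prefactor $\bigl(2c\sqrt{2}/(\delta_\varepsilon\pi_0)\bigr)^{2(\lambda-\kappa)/\kappa}h^{-\varepsilon_0\kappa_0/\kappa}$ with $h=(\delta_\varepsilon\pi_0)^2/(8c^2)$ simplifies to $h^{1-\kappa_0/\kappa}$, which is precisely your $C$. The remaining ingredients --- the self-improvement of the second-order differential inequality to $\frac{d}{dt}\mathcal{H}(P_tf)\le -2(1-\varepsilon_0)\kappa_0\,\mathcal{H}(P_tf)$ via $\mathcal{H}(P_tf)\to 0$, and the applicability of Theorem \ref{ThmSecTDEnt} on a finite state space --- are used identically in the paper.
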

\begin{proof}
Recall the Csisz\'ar-Kullback-Pinsker inequality
\[
|\nu-\mu|_{TV}^2\le \frac{1}{2}H(\nu|\mu),
\]
where $H(\nu|\mu)$ denotes the relative entropy of the probability measure $\nu$ with respect to the reference probability
measure $\mu$. If $\nu\ll\mu$ and $g=\frac{d\nu}{d\mu}$, we have $H(\nu|\mu)=\int_X g\log g\,d\mu$ and
\[
|\nu-\mu|_{TV}=\frac{1}{2}\int_X|g-1|\,d\mu.
\]
For $d\nu=P_t f\,d\mu$ this gives
\[
|P_t f(x)-1| \pi(x)\le \int_X |P_t f-1|\,d\mu\le \sqrt{2\cH(P_t f)},\quad t\ge 0,\;x\in X,
\]
and thus
\begin{equation} \label{Ptfestimate}
|P_t f(x)-1|\le \frac{1}{\pi_0}\,e^{-\kappa t}\sqrt{2 \cH(f)},\quad t\ge 0,\;x\in X.
\end{equation}
Let $c\ge 1$ such that $|\log y|\le c|y-1|$ for all $y\in [\frac{1}{2},\frac{3}{2}]$. 
Given $\varepsilon_0\in (0,1-\frac{\kappa}{\kappa_0})$ we choose $\varepsilon\in(0,\frac{1}{2})$ so small that
\begin{equation} \label{choiceps}
\frac{(1-2\varepsilon) \kappa_0- 12M \varepsilon}{1+\varepsilon}\ge (1-\varepsilon_0)\kappa_0.
\end{equation}
We then fix $\delta_\varepsilon\in(0,\varepsilon]$ such that the two-sided quadratic estimate \eqref{PalmeEstimate} holds. 
Next, define $T_{\varepsilon}=0$ if $2c\sqrt{2\cH(f)}\le\delta_\varepsilon \pi_0$ and set
\begin{equation} \label{Teps}
T_{\varepsilon}=-\frac{1}{\kappa}\,\log\Big(\frac{\delta_\varepsilon \pi_0}{2c\sqrt{2\cH(f)}}\Big)
\end{equation}
otherwise. Employing \eqref{Ptfestimate}, one can easily check that
\[
|P_t f(x)-1|\le \frac{\delta_\varepsilon}{2c}\le \frac{1}{2},\quad t\ge T_{\varepsilon},\;x\in X,
\]
which further entails that
\begin{equation} \label{logPtf}
2|\log\big(P_t f(x)\big)|\le \delta_\varepsilon,\quad t\ge T_{\varepsilon},\;x\in X.
\end{equation}
Thanks to \eqref{logPtf} we may apply Proposition \ref{GammaPsiCompare} with $M=|M_1|_\infty$ to the function $\log(P_t f)$,
which together with the condition $CD(\kappa_0,\infty)$ and \eqref{choiceps} implies that 
for all $t\ge T_{\varepsilon}$,
\begin{align*}
\Psi_{2,\Upsilon}\big(\log(P_t f)\big) & \ge  (1-2\varepsilon)\Gamma_2\big(\log(P_t f)\big)- 6M(\varepsilon+
\delta_\varepsilon)\Gamma\big(\log(P_t f)\big)\\
& \ge  \big((1-2\varepsilon) \kappa_0- 12 M\varepsilon\big) \Gamma\big(\log(P_t f)\big)\\
& \ge \frac{1}{1+\varepsilon}\, \big((1-2\varepsilon) \kappa_0- 12M \varepsilon\big) 
\Psi_\Upsilon\big(\log(P_t f)\big)\\
& \ge (1-\varepsilon_0)\kappa_0\Psi_\Upsilon\big(\log(P_t f)\big).
\end{align*}
Inserting the latter estimate in the representation formula \eqref{SecondTimeDeriEntropy} from Theorem
\ref{ThmSecTDEnt} then yields that for all $t\ge  T_{\varepsilon}$,
\begin{align*}
\frac{d^2}{dt^2}\,\cH(P_t f) &=2\int_X P_t f \,\Psi_{2,\Upsilon}\big(\log (P_t f)\big)\,d\mu\\
& \ge  2(1-\varepsilon_0)\kappa_0 \int_X P_t f \,\Psi_{\Upsilon}\big(\log (P_t f)\big)\,d\mu\\
& = 2(1-\varepsilon_0)\kappa_0\,\cI(P_t f)= -2(1-\varepsilon_0)\kappa_0\,\frac{d}{dt}\,\cH(P_t f).
\end{align*}
cf.\ \eqref{DefFisher}. Setting $\lambda=(1-\varepsilon_0)\kappa_0\,(>\kappa)$ and using $\mathcal{H}(P_t f) \to 0$ as $t \to \infty$, this implies
\begin{equation} \label{betterdecay1}
\cH(P_t f)\le e^{-2\lambda(t-T_\varepsilon)}\cH(P_{T_\varepsilon}f)
,\quad t\ge T_{\varepsilon}.
\end{equation}
This shows assertion (i) with 
\[
h=\frac{(\delta_\varepsilon\pi_0 )^2}{8c^2},
\]
since $\cH(f)\le h$ is equivalent to $T_\varepsilon=0$.

By the entropy decay estimate resulting from $CD_\Upsilon(\kappa,\infty)$
(see Corollary \ref{mLSIoutofCDPalme}),
\[
\cH(P_{T_\varepsilon}f)\le e^{-2\kappa T_\varepsilon}\cH(f).
\]
Combining this with \eqref{betterdecay1} gives
\begin{align*}
\cH(P_t f)\le e^{-2\lambda t} e^{2T_\varepsilon(\lambda-\kappa)} \cH(f),\quad t\ge T_{\varepsilon}.
\end{align*}

Suppose that $2c\sqrt{2\cH(f)}>\delta_\varepsilon \pi_0$, which is equivalent to $\cH(f)>h$ and means that
$T_\varepsilon>0$ is given by \eqref{Teps}. Then
\begin{equation} \label{Tepsest}
e^{2T_\varepsilon(\lambda-\kappa)}=
\Big(\frac{2c\sqrt{2\cH(f)}}{\delta_\varepsilon \pi_0}\Big)^{\frac{2}{\kappa}(\lambda-\kappa)}
=\Big(\frac{2c\sqrt{2}}{\delta_\varepsilon \pi_0}\Big)^{\frac{2}{\kappa}(\lambda-\kappa)}
\cH(f)^{\frac{\lambda}{\kappa}-1},
\end{equation}
and thus
\[
\cH(P_t f)\le e^{-2\lambda t}\Big(\frac{2c\sqrt{2}}{\delta_\varepsilon \pi_0}\Big)^{\frac{2}{\kappa}(\lambda-\kappa)}
\cH(f)^{\frac{\lambda}{\kappa}},\quad t\ge T_{\varepsilon}.
\]
Since $\cH(f)>h$, we have $\cH(f)^{\frac{\lambda}{\kappa}}\le \cH(f)^{\frac{\kappa_0}{\kappa}}h^{-\varepsilon_0
\frac{\kappa_0}{\kappa}}$ and therefore
\[
\cH(P_t f)\le C e^{-2\lambda t}\cH(f)^{\frac{\kappa_0}{\kappa}},\quad t\ge T_\varepsilon,\quad
\mbox{with}\;\,C=\Big(\frac{2c\sqrt{2}}{\delta_\varepsilon \pi_0}\Big)^{\frac{2}{\kappa}(\lambda-\kappa)}
h^{-\varepsilon_0
\frac{\kappa_0}{\kappa}}.
\]

Finally, for $0\le t\le T_\varepsilon$, we may estimate
\[
\cH(P_t f) \le e^{-2\kappa t} \cH(f)\le e^{-2\lambda t} e^{2T_\varepsilon(\lambda-\kappa)} \cH(f)
\]
and use again \eqref{Tepsest} as before to see that $\cH(P_t f)\le C e^{-2\lambda t}\cH(f)^{\frac{\kappa_0}{\kappa}}$.
This proves assertion (ii).
\end{proof}
%
%
\section{Tensorization property}\label{tensandgrad}
In this section we investigate the tensorization property for  the $CD_\Upsilon$ condition, which is fundamental for the classical Bakry-\'Emery curvature-dimension condition and has been extended (see \cite{BGL} for the diffusive setting) to the discrete setting in \cite{LiPe}.

 Let $L_1$ and $L_2$ generate a Markov chain on state spaces $X_1$ and $X_2$ respectively. The operator $L:=L_1 \oplus L_2$  acts on functions $f:X_1 \times X_2 \to \R$ as $(L_1 \oplus L_2)  (f)(x,y) =  L_1 (f_y)(x)+L_2(f^x)(y)$, where we set $f_y(\cdot):= f(\cdot , y)$, $f^x(\cdot)=f(x,\cdot)$. The transition rates of the Markov process  generated by $L_1 \oplus L_2$ (on the state space $X_1 \times X_2$) are given by 
\begin{equation*}
k_\oplus ((x_1,y_1),(x_2,y_2))= \left\{\begin{array}{lll} k_1(x_1,x_2), & \text{ if } y_1=y_2, \\
         k_2(y_1,y_2), & \text{ if } x_1=x_2, \\
         0, & \text{ else, }\end{array}\right. 
\end{equation*} 
where $k_\nu$ denote the transition rates for the process generated by $L_\nu$, $\nu \in \{1,2\}$.
In particular, the operator in \eqref{def:Psi} and  respectively that in \eqref{def:BH}, both corresponding to the transition rates $k_\oplus$, are given by 
\begin{align*}
\Psi_H(f)(x,y) &= \Psi_H^{(1)} (f_y)(x) + \Psi_H^{(2)}(f^x)(y),\\
B_H(f,g)(x,y) &= B_H^{(1)}(f_y,g_y)(x)+B_H^{(2)}(f^x,g^x)(y),
\end{align*}
where  $H:\R \to \R$, $f,g : X_1 \times X_2 \to \R$ and upper indices were used to indicate the corresponding state space.

We are now ready to analyze the operator $\Psi_{2,H}$ corresponding to $k_\oplus$. To that aim, we first provide the following crucial estimate.
\begin{lemma}\label{lem:iteratedcdcestimate}
Let $H\in C^1(\R)$ be convex. Then we have
\begin{equation}\label{crucialestimate}
\begin{split}
L \Psi_H (f) (x,y) - B_{H'}(f,Lf)(x,y) &\geq L_1 \Psi_H^{(1)}(f_y)(x) - B_{H'}^{(1)}(f_y , L_1 f_y)(x)\\
& + L_2 \Psi_H^{(2)}(f^x)(y) - B_{H'}^{(2)}(f^x , L_1 f^x)(y)
\end{split}
\end{equation}
for any $f \in \ell^\infty(X)$.
\end{lemma}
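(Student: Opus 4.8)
The plan is to expand both terms on the left-hand side of \eqref{crucialestimate} by means of the product formulas for $\Psi_H$ and $B_H$ recorded just above, together with the decomposition $Lg(x,y)=L_1 g_y(x)+L_2 g^x(y)$ of the product generator. Applying this with $g=\Psi_H(f)$ and inserting $\Psi_H(f)(x,y)=\Psi_H^{(1)}(f_y)(x)+\Psi_H^{(2)}(f^x)(y)$, the operator $L_1$ acting on the $\Psi_H^{(2)}$-part and the operator $L_2$ acting on the $\Psi_H^{(1)}$-part produce, besides the two diagonal contributions $L_1\Psi_H^{(1)}(f_y)(x)$ and $L_2\Psi_H^{(2)}(f^x)(y)$, two mixed double sums over pairs $(x',y')\in X_1\times X_2$ carrying the weight $k_1(x,x')k_2(y,y')$. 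In the same way, writing $Lf=L_1 f+L_2 f$ inside $B_{H'}(f,Lf)(x,y)=B_{H'}^{(1)}(f_y,(Lf)_y)(x)+B_{H'}^{(2)}(f^x,(Lf)^x)(y)$ splits each summand into a diagonal part, namely $B_{H'}^{(1)}(f_y,L_1 f_y)(x)$ and $B_{H'}^{(2)}(f^x,L_2 f^x)(y)$, and a mixed part carrying the same weight $k_1(x,x')k_2(y,y')$.

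First I would carry out this bookkeeping and observe that the diagonal parts reproduce precisely the right-hand side of \eqref{crucialestimate}; the claim then reduces to the nonnegativity of the remaining mixed terms. To treat those, I fix $x,y$ and, for summation indices $x'\neq x$ and $y'\neq y$, introduce the differences
\begin{align*}
p&=f(x',y)-f(x,y),\qquad q=f(x,y')-f(x,y),\\
m&=f(x',y')-f(x',y)-f(x,y')+f(x,y),
\end{align*}
so that $f(x',y')-f(x,y')=p+m$ and $f(x',y')-f(x',y)=q+m$. A direct rewriting shows that the mixed part of $L\Psi_H(f)(x,y)$ contributes, per pair $(x',y')$, the quantity $[H(p+m)-H(p)]+[H(q+m)-H(q)]$, while the mixed part of $B_{H'}(f,Lf)(x,y)$ contributes exactly $H'(p)\,m+H'(q)\,m$; here the second-order increment $m$ emerges because both $L_2 f^{x'}(y)-L_2 f^x(y)$ and $L_1 f_{y'}(x)-L_1 f_y(x)$ reduce, termwise in the double sum, to $m$.

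Subtracting, the net mixed contribution of each pair $(x',y')$ equals
\[
\big[H(p+m)-H(p)-H'(p)\,m\big]+\big[H(q+m)-H(q)-H'(q)\,m\big]=\Lambda_H(p+m,p)+\Lambda_H(q+m,q),
\]
by the definition \eqref{eq:def_Lambda} of $\Lambda_H$. Since $H$ is convex, $\Lambda_H(w,z)\ge 0$ for all $w,z\in\R$ (it is the Bregman divergence associated with $H$), and the weights $k_1(x,x')k_2(y,y')$ are nonnegative; summing over $x'$ and $y'$ thus shows that the full mixed contribution is nonnegative, which is exactly \eqref{crucialestimate}. The main obstacle I anticipate is the careful bookkeeping of the four mixed double sums and the verification that the first-order factors in the $B_{H'}$-terms ($H'(p)$ and $H'(q)$) pair with the matching increments from the $\Psi_H$-terms so as to assemble into Bregman divergences; once the differences $p,q,m$ are named as above, convexity of $H$ closes the argument at once.
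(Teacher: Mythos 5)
Your proof is correct and takes essentially the same route as the paper's: both expand $L\Psi_H(f)$ and $B_{H'}(f,Lf)$ via the product structure, identify the diagonal contributions with the right-hand side of \eqref{crucialestimate}, and recognize the residual mixed double sums, weighted by $k_1(x,x')k_2(y,y')\ge 0$, as sums of Bregman divergences $\Lambda_H(p+m,p)+\Lambda_H(q+m,q)$, nonnegative by convexity of $H$. Your explicit $p,q,m$ notation is simply a cleaner repackaging of the paper's termwise bookkeeping, where the same cancellation and the inequality $H(a)-H(b)\ge H'(b)(a-b)$ close the argument.
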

\begin{proof}
We observe that
\begin{align*}
&L \Psi_{H}(f)(x,y) = \sum\limits_{x_i \in X_1} k_1(x,x_i) ( \Psi_{H}(f)(x_i,y)- \Psi_{H}(f)(x,y)) \\
&\qquad+ \sum\limits_{y_j \in X_2} k_2(y,y_j) ( \Psi_{H}(f)(x,y_j)- \Psi_{H}(f)(x,y))\\
&= L_1 \Psi_{H}^{(1)}(f_y)(x) + L_2 \Psi_H^{(2)}(f^x)(y) \\ &\qquad + \sum\limits_{x_i \in X_1}\sum\limits_{y_j \in X_2} k_1(x,x_i) k_2(y,y_j) \big( H(f(x_i,y_j)-f(x_i,y)) - H(f(x,y_j)-f(x,y)) \big) \\
\\ &\qquad + \sum\limits_{x_i \in X_1}\sum\limits_{y_j \in X_2} k_1(x,x_i) k_2(y,y_j) \big( H(f(x_i,y_j) - f(x,y_j)) - H(f(x_i,y) - f(x,y)) \big).
\end{align*}
Furthermore,
\begin{align*}
&B_{H'}(f,Lf)(x,y) = \sum\limits_{x_i \in X_1} k_1(x,x_i) H'( f(x_i,y) - f(x,y)) ( L f(x_i,y) - L f(x,y) ) \\
&\qquad + \sum\limits_{y_j \in X_2} k_2(y,y_j) H'(f(x,y_j)-f(x,y)) (L f(x,y_j) - L f(x,y) )\\
&= B_{H'}^{(1)}(f_y,L_1 f_y )(x)+ B_{H'}^{(2)}(f^x,L_2 f^x) (y)\\ &\; + \sum\limits_{x_i \in X_1} \sum\limits_{y_j \in X_2} k_1(x,x_i) k_2(y,y_j) H'(f(x_i,y) - f(x,y)) \big(f(x_i,y_j)-f(x_i,y) - (f(x,y_j) - f(x,y))\big) \\
\\ &\; + \sum\limits_{x_i \in X_1} \sum\limits_{y_j \in X_2} k_1(x,x_i) k_2(y,y_j) H'(f(x,y_j)-f(x,y)) \big( f(x_i,y_j) - f(x,y_j) - ( f(x_i,y) - f(x,y))\big).
\end{align*}
Hence, we deduce
\begin{align*}
&L \Psi_{H} (f)(x,y) - B_{H'}(f,Lf)(x,y)\\ 
&\quad= L_1 \Psi_H^{(1)}(f_y)(x) - B_{H'}^{(1)}(f_y , L_1 f_y)(x) + L_2 \Psi_H^{(2)}(f^x)(y) - B_{H'}^{(2)}(f^x , L_2 f^x)(x)\\
&\qquad + \sum\limits_{x_i \in X_1} \sum\limits_{y_j \in X_2} k_1(x,x_i) k_2(y,y_j) \Big[ H(f(x_i,y_j)-f(x_i,y)) - H(f(x_i,y)-f(x,y))\\ &\qquad\qquad\qquad\qquad \qquad- H'(f(x,y_j)-f(x,y)) \big( f(x_i,y_j) - f(x,y_j) - ( f(x_i,y) - f(x,y))\big) \Big] \\
&\qquad + \sum\limits_{x_i \in X_1} \sum\limits_{y_j \in X_2} k_1(x,x_i) k_2(y,y_j) \Big[ H(f(x_i,y_j)-f(x,y_j)) - H(f(x_i,y) - f(x,y))\\ &\qquad\qquad\qquad\qquad \qquad - H'(f(x_i,y) - f(x,y)) \big(f(x_i,y_j)-f(x_i,y) - (f(x,y_j) - f(x,y))\big) \Big].
\end{align*}
The convexity of $H$ implies that $H(a)-H(b) \geq H'(b)(a-b)$ for all $a,b \in \R$. Inspecting above expressions, we thus conclude the claim.
\end{proof}
For the choice of $H(r)=\frac{r^2}{2}$ the statement of Lemma \ref{lem:iteratedcdcestimate} has been obtained in a similar way in \cite{LiPe} and served as the key estimate in order to show the tensorization property for the Bakry-\'Emery curvature-dimension condition. For our purposes, we choose $H(r)=\Upsilon(r)$.
\begin{theorem}\label{theo:tensorproperty}
If $L_1$ satisfies $CD_\Upsilon(\kappa_1, \infty)$ and $L_2$ satisfies $CD_\Upsilon(\kappa_2, \infty)$ for $\kappa_1,\kappa_2 \in \R$, then $L_1 \oplus L_2$ satisfies $CD_\Upsilon(\kappa,\infty)$ with $\kappa= \min\{\kappa_1,\kappa_2 \}$.
\end{theorem}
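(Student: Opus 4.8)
The plan is to combine the convexity estimate from Lemma~\ref{lem:iteratedcdcestimate}, specialized to $H=\Upsilon$ (which is convex since $\Upsilon''(r)=e^r>0$), with the individual curvature bounds for $L_1$ and $L_2$, and then to exploit the nonnegativity of $\Psi_\Upsilon$ in order to pass to the minimum of the two curvature constants. First I would fix $f\in\ell^\infty(X_1\times X_2)$ and a point $(x,y)\in X_1\times X_2$. Recalling that $\Psi_{2,\Upsilon}(f)=\frac12\big(L\Psi_\Upsilon(f)-B_{\Upsilon'}(f,Lf)\big)$ and observing that the right-hand side of \eqref{crucialestimate} with $H=\Upsilon$ equals precisely $2\Psi_{2,\Upsilon}^{(1)}(f_y)(x)+2\Psi_{2,\Upsilon}^{(2)}(f^x)(y)$, the lemma yields, after dividing by $2$, the superadditivity
\begin{equation*}
\Psi_{2,\Upsilon}(f)(x,y)\ge \Psi_{2,\Upsilon}^{(1)}(f_y)(x)+\Psi_{2,\Upsilon}^{(2)}(f^x)(y),
\end{equation*}
where the upper index indicates that the operator is formed with respect to $L_\nu$ and the respective partial function. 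Note that $f_y\in\ell^\infty(X_1)$ for each fixed $y$ and $f^x\in\ell^\infty(X_2)$ for each fixed $x$, so both lie in the admissible class.

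Next I would apply the hypothesis $CD_\Upsilon(\kappa_1,\infty)$ for $L_1$ to the function $f_y$ and $CD_\Upsilon(\kappa_2,\infty)$ for $L_2$ to the function $f^x$, obtaining $\Psi_{2,\Upsilon}^{(1)}(f_y)(x)\ge \kappa_1\Psi_\Upsilon^{(1)}(f_y)(x)$ and $\Psi_{2,\Upsilon}^{(2)}(f^x)(y)\ge \kappa_2\Psi_\Upsilon^{(2)}(f^x)(y)$. Inserting these into the superadditivity estimate and writing $\kappa=\min\{\kappa_1,\kappa_2\}$, the crucial point is that $\Psi_\Upsilon^{(\nu)}\ge 0$ pointwise, because $\Upsilon(r)=e^r-1-r\ge 0$ for all $r\in\R$ and the transition rates are nonnegative. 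Hence each summand may be bounded below by replacing $\kappa_\nu$ with $\kappa$, and using the tensorization identity $\Psi_\Upsilon(f)(x,y)=\Psi_\Upsilon^{(1)}(f_y)(x)+\Psi_\Upsilon^{(2)}(f^x)(y)$ recorded before the lemma, I arrive at
\begin{equation*}
\Psi_{2,\Upsilon}(f)(x,y)\ge \kappa\,\Psi_\Upsilon(f)(x,y).
\end{equation*}
Since $(x,y)$ and $f$ were arbitrary, this is exactly $CD_\Upsilon(\kappa,\infty)$ for $L_1\oplus L_2$.

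In this argument there is no genuine obstacle once Lemma~\ref{lem:iteratedcdcestimate} is in hand: the entire difficulty has been absorbed into that lemma, where the convexity of $\Upsilon$ is used to discard the nonnegative mixed second-difference terms coming from the cross interaction of the two chains. The only points requiring care are the bookkeeping verifying that the right-hand side of \eqref{crucialestimate} really equals twice the sum of the one-variable $\Psi_{2,\Upsilon}$ operators, and the sign condition $\Psi_\Upsilon\ge 0$, which is precisely what permits the clean reduction to $\kappa=\min\{\kappa_1,\kappa_2\}$ rather than a weaker weighted combination of $\kappa_1$ and $\kappa_2$.
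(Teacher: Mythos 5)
Your proposal is correct and is essentially identical to the paper's proof: both deduce from Lemma \ref{lem:iteratedcdcestimate} with $H=\Upsilon$ the superadditivity $\Psi_{2,\Upsilon}(f)(x,y)\ge \Psi_{2,\Upsilon}^{(1)}(f_y)(x)+\Psi_{2,\Upsilon}^{(2)}(f^x)(y)$, then apply the two curvature hypotheses to $f_y$ and $f^x$ and use $\Psi_\Upsilon^{(\nu)}\ge 0$ together with the additive splitting of $\Psi_\Upsilon$ to conclude $CD_\Upsilon(\min\{\kappa_1,\kappa_2\},\infty)$. Your added remarks (that $f_y,f^x$ lie in the admissible class and that nonnegativity of $\Psi_\Upsilon$ is what justifies replacing $\kappa_\nu$ by the minimum) simply make explicit steps the paper leaves implicit.
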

\begin{proof}
From Lemma \ref{lem:iteratedcdcestimate}, we obtain 
\begin{align*}
\Psi_{2,\Upsilon}(f) (x,y) &\geq \Psi_{2,\Upsilon}^{(1)}(f_y)(x) + \Psi_{2,\Upsilon}^{(2)}(f^x)(y)\\ 
&\geq \kappa_1 \Psi_{\Upsilon}^{(1)} (f_y)(x) + \kappa_2 \Psi_\Upsilon^{(2)} (f^x)(y)\\
&\geq \kappa \ (\Psi_\Upsilon^{(1)}(f_y)(x) + \Psi_\Upsilon^{(2)}(f^x)(y))  \\
&= \kappa \ \Psi_\Upsilon(f)(x,y). 
\end{align*}
\end{proof}
\begin{remark} \label{RemarkTensorDim}
The above tensorization property extends to the $CD_\Upsilon(\kappa,d)$ condition as defined in Remark \ref{commentsCD}(ii). Indeed, if $L_i$ satisfies $CD_\Upsilon(\kappa_i,d_i)$ with $\kappa_i \in \R$ and $d_i \in (0,\infty]$, $i\in \{1,2\}$, then $L_1 \oplus L_2$ satisfies $CD_\Upsilon(\kappa , d)$ with $\kappa= \min\{\kappa_1,\kappa_2 \}$ and $d=d_1+d_2$. This follows analogously as in \cite{LiPe}, using Young's inequality to treat the dimension terms.
\end{remark}
\section{Examples}\label{sec:examplesection}
\begin{example}[\textit{The two-point space}] \label{TwoPoint}
We consider  $X=\{0,1\}$ with $k(0,1)=a$ and $k(1,0)=b$, where $a,b>0$. 
Here, the invariant and reversible probability measure $\mu$ is given by $d\mu=\pi d\#$ with $\pi(0)=\frac{b}{a+b}$ and $\pi(1)=\frac{a}{a+b}$.
Writing $\tilde{x}=1-x$ for $x\in X$ we have
\[
\Psi_\Upsilon(f)(x)=k(x,\tilde{x})\Upsilon\big(f(\tilde{x})-f(x)\big)
\]
and
\begin{align*}
2\Psi_{2,\Upsilon}&(f)(x) = L\Psi_\Upsilon(f)(x)-B_{\Upsilon'}(f,Lf)(x)\\
& = k(x,\tilde{x})\Big(\big(\Psi_\Upsilon(f)(\tilde{x})-\Psi_\Upsilon(f)(x)  \big)
- \Upsilon'\big(f(\tilde{x})-f(x)  \big)\big(Lf(\tilde{x})-Lf(x)\big)\Big)\\
& = k(x,\tilde{x}) \,k(\tilde{x},x)\big[\Upsilon\big(f(x)-f(\tilde{x})\big)-\Upsilon'\big(f(\tilde{x})-f(x)\big) 
\big(f(x)-f(\tilde{x})\big)\big]\\
& \;\quad +k(x,\tilde{x})^2\big[-\Upsilon\big(f(\tilde{x})-f(x)\big)+ \Upsilon'\big(f(\tilde{x})-f(x)\big) \big( f(\tilde{x})-f(x)\big) \big].
\end{align*}
Setting $t=f(\tilde{x})-f(x)$, we thus have $\Psi_{2,\Upsilon}(f)(x)\ge \kappa \Psi_\Upsilon(f)(x)$ for all $f\in \iR^X$ if and only if
\begin{equation} \label{twopoint1}
k(\tilde{x},x)\big(e^{-t}-1+te^t\big)+k(x,\tilde{x})\big(te^t-e^t+1\big)\ge 2\kappa \Upsilon(t),\quad t\in \iR.
\end{equation}
Note that
\[
\omega(t):=te^t-e^t+1=\Upsilon'(t)t-\Upsilon(t)> 0,\quad t\in \iR\setminus \{0\},
\]
since for any $t\in \iR\setminus \{0\}$ there is some $\xi$ between $0$ and $t$ such that
\[
0=\Upsilon(0)= \Upsilon(t)+\Upsilon'(t)(-t)+\frac{1}{2}e^{\xi}t^2>\Upsilon(t)+\Upsilon'(t)(-t).
\]
Further, $\omega(t)\sim \frac{1}{2}t^2$ as $t\to 0$ and thus $\omega(t)/\Upsilon(t)\to 1$ as $t\to 0$.
Note also that
\[
e^{-t}-1+te^t=\omega(t)+\big(e^t+e^{-t}-2\big)=\omega(t)+\big(e^{t/2}-e^{-t/2}\big)^2>0,\quad t\in \iR\setminus\{0\},
\]
and $e^{-t}-1+te^t\sim \frac{3}{2}t^2$ as $t\to 0$. Consequently, for $x=0$, the condition \eqref{twopoint1} is equivalent to
\begin{equation}  \label{twopoint2}
b\, \frac{\omega(t)+\big(e^{t/2}-e^{-t/2}\big)^2}{\Upsilon(t)}+a\, \frac{\omega(t)}{\Upsilon(t)}\ge 2\kappa,\quad 
t\in \iR\setminus \{0\}. 
\end{equation}
Observe that $\omega(t)/\Upsilon(t)\to 0$ as $t\to -\infty$ and $(e^{t/2}-e^{-t/2})^2/\Upsilon(t)\to 1$ as $t\to \infty$.
On the other hand, $\omega(t)/\Upsilon(t)\to \infty$ as $t\to \infty$ and $(e^{t/2}-e^{-t/2})^2/\Upsilon(t)\to \infty$ as $t\to
-\infty$. This implies that for any $a,b>0$ there exists a maximal $\kappa=\kappa_0(a,b)>0$ such that \eqref{twopoint2} holds true,
and for arbitrarily fixed $a>0$ we have $\kappa_0(a,b)\to 0$ as $b\to 0+$. Since $(e^{t/2}-e^{-t/2})^2/\Upsilon(t)>1$
for all $t\in \iR\setminus\{0\}$ (with limit $2$ at $t=0$), it follows that $\kappa_0(a,b)\ge \frac{b}{2}$.
By symmetry, one obtains analogous statements concerning the validity of \eqref{twopoint1} at $x=1$, i.e.
for any $a,b>0$ there is a maximal $\kappa=\kappa_1(a,b)>0$ such that this condition is satisfied, and $\kappa_1(a,b)\to 0$
as $a\to 0+$ as well as $\kappa_1(a,b)\ge \frac{a}{2}$. Consequently, $CD_\Upsilon(\kappa,\infty)$ is satisfied
with optimal $\kappa=\min(\kappa_0(a,b),\kappa_1(a,b))$.

As to the classical Bakry-\'Emery condition, one finds that
\[
\Gamma(f)(x)=\frac{1}{2}\,k(x,\tilde{x})\big(f(\tilde{x})-f(x)\big)^2
\]
and
\begin{align*}
2\Gamma_2 &(f)(x)=L \Gamma(f)(x)-2\Gamma(f, Lf)(x)\\
& = k(x,\tilde{x})\Big(\big(\Gamma(f)(\tilde{x})-\Gamma(f)(x)  \big)
- \big(f(\tilde{x})-f(x)  \big)\big(Lf(\tilde{x})-Lf(x)\big)\Big)\\
& = \frac{3}{2} k(x,\tilde{x}) \,k(\tilde{x},x) \big(f(x)-f(\tilde{x})\big)^2+
\frac{1}{2} k(x,\tilde{x})^2\big(f(\tilde{x})-f(x)\big)^2
\end{align*}
which shows that $\Gamma_2(f)(x)\ge \kappa \Gamma(f)(x)$ for all $f\in \iR^X$ if and only if
\[
\kappa\le \frac{1}{2}\,\big(3k(\tilde{x},x)+k(x,\tilde{x})\big).
\]
Consequently, $CD(\kappa,\infty)$ holds true with optimal 
\[
\kappa=\frac{a+b}{2}+\min(a,b).
\]

Comparing the two estimates, we see that keeping one of the two parameters $a$ and $b$ fixed and sending the other to zero,
the optimal curvature constant $\kappa$ in $CD_\Upsilon(\kappa,\infty)$ tends to zero as well whereas the optimal curvature bound
in $CD(\kappa,\infty)$ stays bounded away from zero.
\end{example}
\begin{example}[\textit{Weighted complete graphs}]\label{ex:kequall}
Here we consider an arbitrary countable set $X$ with at least two elements.
Let $l:\,X\to (0,\infty)$ be integrable on $X$ and $k(x,y)=l(y)$ for all $x,y\in X$ with $x\neq y$. Then $\mu$ given by
$d\mu=\pi d\#$ with $\pi(x)=l(x)$, $x\in X$, is an invariant and reversible measure. We have
\[
\Psi_\Upsilon(f)(x)=\sum_{y\in X}l(y)\Upsilon\big(f(y)-f(x)\big),
\]
and by \eqref{Psi2Formel}
\begin{align*}
2\Psi_{2,\Upsilon} &(f)(x)  =\sum_{y\in X\setminus\{x\}}\sum_{z\in X} l(y)l(z)\Big(\Upsilon\big(f(z)-f(y)\big)-
\Upsilon'\big(f(y)-f(x)\big)\big(f(z)-f(y)\big)\Big)\\
& \quad\; +\sum_{y\in X\setminus\{x\}}\sum_{z\in X} l(y) l(z) \Upsilon'\big(f(y)-f(x)\big)\big(f(z)-f(x)\big)\\
& \quad\; - \sum_{y\in X\setminus\{x\}}\sum_{z\in X} l(y) l(z) \Upsilon\big(f(z)-f(x)\big)\\
& = \sum_{y\in X\setminus\{x\}}\sum_{z\in X} l(y) l(z) \Big(  \Upsilon'\big(f(y)-f(x)\big)\big(f(y)-f(x)\big)
-\Upsilon\big(f(y)-f(x)\big)\Big)\\
& \quad\; +\sum_{y\in X\setminus\{x\}}\sum_{z\in X} l(y)l(z)\Upsilon\big(f(z)-f(y)\big)
+l(x)\Psi_\Upsilon(f)(x)\\
&= \sum_{y\in X\setminus\{x\}} l(y) \Big( |l|_1 \Upsilon'(f(y)-f(x)) (f(y)-f(x)) + l(x) \Upsilon(f(x)-f(y)) \\
& \qquad\qquad \quad - \big(|l|_1 - l(x)\big)\Upsilon(f(y)-f(x)) \Big) + \sum_{\substack{y,z \in X \\ y,z \neq x}}l(y)l(z) \Upsilon(f(z)-f(y)).
\end{align*}
We introduce the functions $\nu_{c,d}:\R \to \R$, given by 
\begin{equation}\label{nucddefi}
\nu_{c,d}(r)= c \Upsilon'(r)r + \Upsilon(-r) - d \Upsilon(r),
\end{equation}
with constants $c,d \in \R$. In Appendix \ref{Appendixnucd}, we collect several basic properties of these functions, which will be of great importance throughout this section.

Now, $\Psi_{2,\Upsilon}(f)(x) \geq \kappa \Psi_\Upsilon(f)(x)$ is equivalent to
\begin{equation}\label{keqlformulationCD}
l(x) \sum_{y \in X \setminus \{x\}} l(y) \nu_{\frac{|l|_1}{l(x)},\frac{|l|_1}{l(x)} + \frac{2\kappa}{l(x)}-1}(f(y)-f(x)) + \sum_{\substack{y,z \in X \\ y,z \neq x}}l(y)l(z) \Upsilon(f(z)-f(y)) \geq 0 .
\end{equation}
\begin{lemma}\label{lem:keqlCDlemma}
In the setting of the present example,
$CD_\Upsilon(\kappa,\infty)$ holds true for $\kappa \in \R$ in $x\in X$ if and only if 
\begin{equation}\label{conditionkeqlCD}
\nu_{\frac{|l|_1}{l(x)},\frac{|l|_1}{l(x)} + \frac{2\kappa}{l(x)}-1}(r) \geq 0
\end{equation}
 holds for any $r \in \R$.
\end{lemma}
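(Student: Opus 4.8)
The plan is to reduce everything to the reformulation already derived in the present example, namely that $CD_\Upsilon(\kappa,\infty)$ at $x$ is equivalent to the validity of \eqref{keqlformulationCD} for all admissible $f$. Writing $c=\tfrac{|l|_1}{l(x)}$ and $d=\tfrac{|l|_1}{l(x)}+\tfrac{2\kappa}{l(x)}-1$ for brevity, so that the relevant function is $\nu_{c,d}$, the task reduces to showing that \eqref{keqlformulationCD} holds for all $f\in\ell^\infty(X)$ if and only if $\nu_{c,d}(r)\ge 0$ for every $r\in\mathbb{R}$. This is a clean two-direction equivalence.

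For sufficiency I would argue directly. Assuming $\nu_{c,d}(r)\ge 0$ for all $r$, the first sum in \eqref{keqlformulationCD} is a sum of terms $l(x)l(y)\,\nu_{c,d}(f(y)-f(x))$ with strictly positive weights, hence nonnegative. The second sum consists of terms $l(y)l(z)\,\Upsilon(f(z)-f(y))$, and since $\Upsilon(r)=e^r-1-r\ge 0$ for all $r$ (being convex with global minimum $0$ at the origin), this sum is nonnegative as well. Thus \eqref{keqlformulationCD} holds, giving $CD_\Upsilon(\kappa,\infty)$ at $x$.

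For necessity the key idea is to select a test function that annihilates the double sum. Fixing $r\in\mathbb{R}$, I would test \eqref{keqlformulationCD} against the bounded (hence admissible) function $f=r\,\mathds{1}_{X\setminus\{x\}}$, i.e.\ $f(x)=0$ and $f(y)=r$ for all $y\neq x$. For this choice every difference $f(z)-f(y)$ with $y,z\neq x$ vanishes, so $\Upsilon(f(z)-f(y))=\Upsilon(0)=0$ and the entire second sum drops out, while the first sum collapses to $\nu_{c,d}(r)\sum_{y\neq x}l(y)=\nu_{c,d}(r)\,(|l|_1-l(x))$. Hence \eqref{keqlformulationCD} yields $l(x)\,(|l|_1-l(x))\,\nu_{c,d}(r)\ge 0$. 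Since $l(x)>0$ and, because $X$ has at least two elements and $l$ is strictly positive, $|l|_1-l(x)=\sum_{y\neq x}l(y)>0$, this forces $\nu_{c,d}(r)\ge 0$; as $r$ was arbitrary, \eqref{conditionkeqlCD} follows.

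The algebra leading to \eqref{keqlformulationCD} is already done in the example, so that part is routine. The only genuinely clever step is the choice of the locally constant test function $f=r\,\mathds{1}_{X\setminus\{x\}}$, which decouples the $\nu_{c,d}$-term from the $\Upsilon$-term; I do not expect any obstacle beyond recognizing this, together with the elementary facts that $\Upsilon\ge 0$ and that $|l|_1-l(x)>0$.
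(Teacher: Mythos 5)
Your proposal is correct and follows essentially the same route as the paper: sufficiency via the nonnegativity of $\nu_{c,d}$ together with $\Upsilon\ge 0$ in \eqref{keqlformulationCD}, and necessity via exactly the paper's test function $f(y)=r$ for $y\neq x$, $f(x)=0$ (the paper phrases this contrapositively, assuming $\nu_{c,d}(r_x)<0$ and showing \eqref{keqlformulationCD} fails). Your additional checks — that $f$ is bounded and hence admissible, and that $|l|_1-l(x)>0$ since $l$ is strictly positive and $|X|\ge 2$ — are details the paper leaves implicit, and they are correct.
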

\begin{proof}
If $\nu_{\frac{|l|_1}{l(x)},\frac{|l|_1}{l(x)} + \frac{2\kappa}{l(x)}-1}$ is non-negative, then $CD_\Upsilon(\kappa,\infty)$ in $x \in X$ follows by \eqref{keqlformulationCD}.
Conversely, assume that there exists some $r_x\in \R$ such that $\nu_{\frac{|l|_1}{l(x)},\frac{|l|_1}{l(x)} + \frac{2\kappa}{l(x)}-1}(r_x) < 0$. We define $f \in \R^X$ by $f(y)=r_x$ for any $y \neq x$ and $f(x)=0$. For this choice, \eqref{keqlformulationCD} fails to be true.
\end{proof}
Continuing with Example \ref{ex:kequall}, we apply Lemma \ref{App_notlin}(ii) to \eqref{conditionkeqlCD} and observe that \eqref{keqlformulationCD} holds true if $|l|_1 \geq 2 l(x)$ and
\begin{equation*}
\frac{2 \kappa}{l(x)}-1 \leq 2^{\frac{3}{2}} \sqrt{\frac{|l|_1}{l(x)}} - 1,
\end{equation*}
where the latter is equivalent to $\kappa \leq \sqrt{2 |l|_1 l(x)}$. In the case of $|l|_1 < 2 l(x)$, Lemma \ref{App_notlin}(ii) applied to \eqref{conditionkeqlCD} implies that \eqref{keqlformulationCD} is valid if
\begin{equation*}
\frac{2\kappa}{l(x)}-1  \leq 2\frac{|l|_1}{l(x)}-1,
\end{equation*}
which is equivalent to $\kappa \leq |l|_1$. Note, that $|l|_1\geq \sqrt{2 |l|_1 l(x)}$ holds true if and only if $|l|_1 \geq 2 l(x)$, which is clearly satisfied for at least one $x \in X$.

As a consequence we obtain that $CD_\Upsilon(\sqrt{2 |l|_1 l_*},\infty)$ holds true, where $l_*:=\inf_{x\in X}l(x)$. If $X$ is of finite size, we thus deduce a positive global curvature bound.
In the case of $X$ being of infinite size, we deduce $CD_\Upsilon(0,\infty)$, which is in fact best possible.
Indeed, for $\kappa > 0$ we  write   $\nu_{\frac{|l|_1}{l(x)},\frac{|l|_1}{l(x)} + \frac{2\kappa}{l(x)}-1} = \nu_{\frac{|l|_1}{l(x)},\frac{|l|_1}{l(x)}(1+\frac{2\kappa}{|l|_1})-1}$ and apply Lemma \ref{App_notlin}(i) to deduce from Lemma \ref{lem:keqlCDlemma} that $CD_\Upsilon(\kappa,\infty)$ can not hold if we choose $x \in X$ such that $l(x)$ is small enough, which is possible since $l$ is integrable on $X$.
\end{example}
\begin{example}[\textit{The complete graph}]\label{ex:Kn}
Now we examine a quite important special case of Example \ref{ex:kequall}. Let $X$ be an arbitrary set of $n$ elements, $n \geq 2$, and $k(x,y)=1$ for all $x,y \in X$ with $x \neq y$. The graph associated to the Markov chain generated by $L$ is then given by the complete graph $K_n$. The uniform measure serves as the invariant and reversible measure.
In the setting of Example \ref{ex:kequall} this translates to $l(x)=1$, $x \in X$, i.e. $L$ satisfies $CD_\Upsilon(\sqrt{2n},\infty)$ by Example \ref{ex:kequall}.

In \cite{KRT} it is shown that the optimal Bakry-\'Emery curvature constant for the Markov generator associated to the complete graph is given  by $1+\frac{n}{2}$. Due to Proposition \ref{CDPalmeandCD} this is the best possible constant one can hope  for regarding the $CD_\Upsilon$ condition. By Lemma \ref{lem:keqlCDlemma}, the constant $1 + \frac{n}{2}$ is achieved if and only if $\nu_{n,2n+1}$ is non-negative, which is only true  for the case of $n=2$ by Lemma \ref{lem:nucdlemma}.

We collect a few implications of the properties presented in Appendix \ref{Appendixnucd}:
\begin{itemize}
\item In the case of $n=2$, the optimal constant is achieved, i.e. $L$ satisfies $CD_\Upsilon(2,\infty)$ (which is best possible since it is best possible for the Bakry-\'Emery curvature-dimension condition). This follows from Lemma \ref{lem:nucdlemma}.
\item If $n>2$, the optimal constant for the $CD_\Upsilon$ condition is strictly less than the optimal Bakry-\'Emery curvature constant by Lemma \ref{lem:nucdlemma}. Moreover, the constant does not grow linearly in $n$, which is in contrast to  Bakry-\'Emery curvature. Indeed, setting $\kappa= \alpha n + \beta$ with $\alpha>0$ and $\beta \in \R$ we find by Lemma \ref{App_notlin}(i) that there exists some $r \in \R$ and $N \in \N$ such that $\nu_{n,(1+2\alpha)n + 2\beta -1}(r)< 0$ for any $n \geq N$. Thus, $CD_\Upsilon(\kappa,\infty)$ can not hold true by Lemma \ref{lem:keqlCDlemma}. 
\item Let $f \in \R^X$ and $x_* \in X$ such that $f$ achieves its global minimum at $x_* \in X$. By \eqref{keqlformulationCD} and Lemma \ref{lem:nucdforpositive}, the estimate
\begin{equation*}
\Psi_{2,\Upsilon}(f)(x_*) \geq \big(1+\frac{n}{2}\big) \Psi_\Upsilon (f)(x_*)
\end{equation*} 
holds true. This estimate is best possible, as for the function $f_*(y)= \mathrm{dist}(y,x_*)$, where $\mathrm{dist}:X \times X \to \N_0$ denotes the combinatorical graph distance on the underlying graph, 
$ \Gamma_2(f_*)(x_*) = \big(1+\frac{n}{2}\big)\Gamma(f_*)(x_*)$ is valid
(cf. the proof of Theorem 1.2 in \cite{KRT}, where, more generally, locally finite graphs are considered).
\end{itemize}
\end{example}
\begin{remark}\label{ErbarMaascompletegraph}
Concerning the link between the $CD_\Upsilon(\kappa,\infty)$ condition and the entropic Ricci curvature from \cite{EM12}, Example \ref{ex:Kn} has the interesting consequence that the curvature notion of Erbar and Maas does not imply in general the $CD_\Upsilon(\kappa,\infty)$ condition. More precisely, it is not  true in general that there exist $\alpha>0$ and $\beta > - \frac{\alpha}{2}$ such that the entropic Ricci curvature of \cite{EM12}  with constant $\kappa>0$ does imply $CD_\Upsilon(\alpha \kappa + \beta,\infty)$. Indeed, let us denote for the moment by $\Psi_{2,\Upsilon}^{(n)}$ and $\Psi_\Upsilon^{(n)}$ the respective operators corresponding to the normalized complete graph, i.e. in the setting of Example \ref{ex:kequall} we have $l(x)=\frac{1}{n}$ for any $x \in X$, where $X$ is a set of $n$ elements. Note that this corresponds to the setting which has been considered in \cite{EM12}. Due to \cite[Example 5.1]{EM12}, the normalized complete graph has entropic curvature constant $\frac{1}{2}+\frac{1}{2n}$. We assume for contradiction that we find $\alpha>0$ and $\beta > - \frac{\alpha}{2}$ such that the normalized complete graph satisfies $CD_\Upsilon(\kappa,\infty)$ with $\kappa=\alpha \big(\frac{1}{2}+\frac{1}{2n}\big)+\beta$ for any $n \geq 2$. Then we have
\begin{align*}
\Psi_{2,\Upsilon}(f) = n^2 \Psi_{2,\Upsilon}^{(n)}(f)  \geq n^2 \kappa \Psi_\Upsilon^{(n)} (f) = n \kappa\, \Psi_\Upsilon(f),
\end{align*}
where $\Psi_{2,\Upsilon}$ and $\Psi_\Upsilon$ correspond to the non-normalized setting of Example \ref{ex:Kn}. Now, we get a contradiction to the fact that the curvature constant does not grow linearly in $n$ in Example \ref{ex:Kn}.  
\end{remark}
\begin{example}[\textit{The hypercube}]\label{ex:Hn} 
We have seen in Example \ref{ex:Kn} that the Markov generator associated to $K_2$ satisfies $CD_\Upsilon(2,\infty)$. Now, since $K_2$ corresponds to the one-dimensional hypercube, we denote the respective Markov generator by $L_1$. Iteratively, we define for $n \geq 2$
\begin{equation*}
L_n := L_{n-1} \oplus L_{n-1}
\end{equation*}
in the sense of Section \ref{tensandgrad}. $L_n$ is then defined on the state space $X^n$, where $X$ is a set of two elements. The transition rates for $L_n$ are  given by $k_n(x,y)=1$, $x, y \in X^n$, if the Hamming distance between $x$ and $y$ equals $1$ and $0$ otherwise. The respective invariant and reversible measure is the $n$-fold product measure of $X$ and hence the uniform measure on $X^n$ and the underlying graph to $L_n$ is the hypercube $H_n$.

Combining Theorem \ref{theo:tensorproperty} with the fact that $L_1$ satisfies $CD_\Upsilon(2,\infty)$ we conclude that $L_n$ satisfies $CD_\Upsilon(2,\infty)$ for any $n \in \N$. Note that this constant is optimal as it is optimal with respect to Bakry-\'Emery curvature (cf. \cite{KRT}).
\end{example}
\begin{remark}
Following Example 3.7 in \cite{BoTe}, the constant $\kappa=2$ yields the optimal constant in the respective modified logarithmic Sobolev inequality in case of the hypercube (note that the Dirichlet form considered in \cite[Example 3.7]{BoTe} equals $2 \mathcal{E}(f,g)$, with $\mathcal{E}$ given by \eqref{Dirichletform}).
\end{remark}
\begin{example}[\textit{Weighted $4$-cycles}]\label{ex:weighted4cyc}
Let $X=\{x_1,x_2,x_3,x_4\}$ with  $k(x_i,x_j)=0$, if $i$ and $j$ are both even  or both odd, and $k(x_1,x_2)=k(x_4,x_3)=a_+$, $k(x_2,x_1)=k(x_3,x_4)=a_-$, $k(x_1,x_4)=k(x_2,x_3)=b_+$ and $k(x_4,x_1)=k(x_3,x_2)=b_-$, with $a_+,a_-,b_+,b_- > 0$. Hence, the underlying graph to $L$ is a weighted $4$-cycle.

Let $f \in \R^X$, then we observe from \eqref{Psi2Formel}
\begin{align*}
2 \Psi_{2,\Upsilon}(f)(x_1) &= a_+ b_+ \Big(\Upsilon(f(x_3)-f(x_2))-\Upsilon'(f(x_2)-f(x_1)) (f(x_3) - f(x_2)) \\
&\quad+ \Upsilon(f(x_3)-f(x_4)) - \Upsilon'(f(x_4)-f(x_1)) (f(x_3)-f(x_4))\Big)\\
&\quad+ a_+ a_- \big( \Upsilon(f(x_1)-f(x_2))+ \Upsilon'(f(x_2)-f(x_1))(f(x_2)-f(x_1))\big)\\
&\quad + b_+ b_- \big( \Upsilon(f(x_1)-f(x_4))+ \Upsilon'(f(x_4)-f(x_1))(f(x_4)-f(x_1))\big)\\
&\quad + a_+^2 \Upsilon'(f(x_2)-f(x_1)) (f(x_2)-f(x_1)) + b_+^2 \Upsilon'(f(x_4)-f(x_1))(f(x_4)-f(x_1))\\
&\quad + a_+ b_+\Big( \Upsilon'(f(x_2)-f(x_1))(f(x_4)-f(x_1)) + \Upsilon'(f(x_4)-f(x_1))(f(x_2)-f(x_1))\Big) \\
&\quad - ( a_+ + b_+ ) \big( a_+ \Upsilon(f(x_2)-f(x_1)) + b_+ \Upsilon(f(x_4)-f(x_1))\big). 
\end{align*}
Now, we minimize this expression with respect to the value of $f(x_3)$. For that purpose, we consider the mapping $r\mapsto \Upsilon(r -c_1) - \Upsilon'(c_1-c_0)(r-c_1) + \Upsilon(r-c_2)-\Upsilon'(c_2-c_0)(r-c_2)$, with $c_0,c_1,c_2 \in \R$, which attains its minimum at $r=c_1+c_2-c_0$. Henceforth we will use the notation $t= f(x_2)-f(x_1)$ and $s= f(x_4)-f(x_1)$. We deduce
\begin{align*}
2 \Psi_{2,\Upsilon}(f)(x_1) &\geq a_+ \Big( \Upsilon'(t)t (a_- + a_+) +  \Upsilon(-t)a_- - \Upsilon(t)a_+ \Big) \\
&\quad+ b_+\Big(\Upsilon'(s)s (b_-+b_+) + \Upsilon(-s)b_- - \Upsilon(s)b_+ \Big)\\
&= a_+ a_-\, \nu_{1+\frac{a_+}{a_-},\frac{a_+}{a_-}}(t) + b_+ b_-\, \nu_{1+\frac{b_+}{b_-},\frac{b_+}{b_-}}(s).
\end{align*}
Due to the symmetric structure, we can perform analogous computations at $x_2,x_3$ and $x_4$. Then, we deduce that $CD_\Upsilon(\kappa,\infty)$ is satisfied for some $\kappa>0$ provided that
the function
$\nu_{\lambda_1,\lambda_2}$
is non negative 
for any pair of parameters
\begin{equation*}
(\lambda_1,\lambda_2)\in \Big\{ \big(1+\frac{a_+}{a_-}, \frac{a_+ +2\kappa}{a_-}\big),\big(1+\frac{a_-}{a_+}, \frac{a_- +2\kappa}{a_+}\big),\big(1+\frac{b_+}{b_-}, \frac{b_+ +2\kappa}{b_-}\big),\big(1+\frac{b_-}{b_+}, \frac{b_- +2\kappa}{b_+}\big)\Big\}.
\end{equation*}
We employ Lemma \ref{App_notlin}(ii) to conclude after a short computation that this is satisfied if 
\begin{equation*}
\kappa \leq \min \{ \sqrt{2 a_{\min}(a_{\min} + a_{\max})}, a_{\min} + a_{\max} \}= \sqrt{2 a_{\min}(a_{\min} + a_{\max})}
\end{equation*}
and 
\begin{equation*}
\kappa \leq \min \{ \sqrt{2 b_{\min}(b_{\min} + b_{\max})}, b_{\min} + b_{\max} \}= \sqrt{2 b_{\min}(b_{\min} + b_{\max})},
\end{equation*}
where $a_{\min}=\min\{a_-,a_+\}, a_{\max}= \max \{a_-,a_+\}, b_{\min}=\min \{b_-,b_+\}$ and $b_{\max}=\max \{b_-,b_+\}$.

Consequently, $CD_\Upsilon ( \min \big\{\sqrt{2 a_{\min}(a_{\min} + a_{\max})},\sqrt{2 b_{\min}(b_{\min} + b_{\max})} \big\} ,\infty) $ holds true. Note, that this generalizes the result on the (unweighted) $2$-dimensional hypercube, which satisfies the optimal bound $CD_\Upsilon(2,\infty)$ by Example \ref{ex:Hn}.
\end{example}
A class of examples having non-negative Bakry-\'Emery curvature  is given by Ricci-flat graphs, which were originally introduced in \cite{CY96}. We recall the definition for the reader's convenience.
\begin{defi}\label{defi:Ricciflat}
Let $G=(V,E)$ be an unweighted $d$-regular graph. We call $G$ Ricci-flat at $x\in V$ if there exist maps $\eta_i: B_1(x) \to V$ (where $B_1(x)$ is the closed ball with radius $1$ and center $x$ with respect to the combinatorical graph distance) for $1 \leq i \leq d$ satisfying the following properties:
\begin{itemize}
\item[(i)] $\eta_i(u)\in B_1(u) \setminus \{u\} $ for any $u \in B_1(x)$,
\item[(ii)] $\eta_i(u) \neq \eta_j(u)$, whenever $i \neq j$,
\item[(iii)] $\bigcup_j \eta_j(\eta_i(x)) = \bigcup_j \eta_i(\eta_j(x))$ for any $i \in \{1,...,d\}$.
\end{itemize} 
We say $G$ is Ricci-flat if $G$ is Ricci-flat at each $x \in V$.
\end{defi}
In \cite{Mn1} it was shown already that Ricci-flat graphs satisfy $CD \log (d,0)$ with some finite $d>0$. As a consequence, $CD_\Upsilon(0,\infty)$ holds, cf. Remark \ref{commentsCD}(i). See also \cite{DKZ}. Nevertheless we revisit this example by giving an argument that uses the convexity of the mapping $r \mapsto \Upsilon(r)$. This argument will also be crucial below to treat the case of (R)-Ricci-flat graphs.

\begin{example}[\textit{Ricci-flat graphs}]\label{ex:Ricci}
Let the transition rates are chosen in such a way that the underlying graph to $L$ is Ricci-flat with vertex set $X$. The invariant and reversible measure is given by the uniform measure on $X$.
Then we can write
\begin{align*}
2 \Psi_{2,\Upsilon} (f) (x) &= L \Psi_\Upsilon (f)(x) - B_{\Upsilon'}(f,Lf)(x) \\
&= \sum\limits_{i=1}^d \Big(\Psi_\Upsilon (f (\eta_i (x)) - \Psi_\Upsilon (f(x))) - \Upsilon'\big(f(\eta_i (x))-f(x)\big) \big(Lf(\eta_i(x)) - Lf (x)\big)\Big).
\end{align*}
Applying property (iii) from Definition \ref{defi:Ricciflat}, yields
\begin{align*}
&\sum\limits_{i=1}^d \Upsilon'\big(f(\eta_i (x))-f(x)\big) \big(Lf(\eta_i(x)) - Lf (x)\big) \\
&\qquad = \sum\limits_{j=1}^d\sum\limits_{i=1}^d \Upsilon'\big(f(\eta_i(x)) - f(x)\big) \big(f(\eta_j(\eta_i(x))) -f(\eta_i(x)) - (f(\eta_j(x)) - f(x))\big) \\
&\qquad = \sum\limits_{j=1}^d\sum\limits_{i=1}^d \Upsilon'\big(f(\eta_i(x)) - f(x)\big) \big(f(\eta_i(\eta_j(x))) -f(\eta_i(x)) - (f(\eta_j(x)) - f(x))\big).
\end{align*}
Hence, we can write
\begin{equation}\label{ricciflatgg2}
\begin{split}
2 \Psi_{2,\Upsilon} (f) (x) &= \sum\limits_{j=1}^d\sum\limits_{i=1}^d \Big(\Upsilon\big(f( \eta_j (\eta_i (x)))-f(\eta_i(x))\big) - \Upsilon\big(f(\eta_j (x))-f(x)\big)\\
&\qquad \qquad- \Upsilon'\big(f(\eta_j(x)) - f(x)\big) \big(f(\eta_j(\eta_i(x)))-f(\eta_i(x)) - ( f(\eta_j(x)) - f(x))\big)\Big).
\end{split} 
\end{equation}
By convexity of $r \mapsto \Upsilon(r)$ we have $\Upsilon(a) - \Upsilon(b) \geq \Upsilon'(b)(a-b)$, $a,b \in \R$. Setting $a=f(\eta_j(\eta_i(x)))-f(\eta_i(x))$ and $b=f(\eta_j(x)) - f(x)$, yields that $L$ satisfies $CD_\Upsilon(0,\infty)$. This is optimal in general, due to the existence of Ricci-flat graphs which do not allow for positive Bakry-\'Emery curvature, e.g. the discrete Laplacian on the lattice $\mathbb{Z}^d$, $d \in \N$.
\end{example}
The situation of the latter example improves if we restrict on a smaller class of graphs. Very recently the notion of (R)-Ricci flatness was introduced in \cite{CKKLP}. Again, we repeat the definition.
\begin{defi}
We call a graph $G=(V,E)$  (R)-Ricci-flat in $x\in V$ if it is Ricci-flat in $x \in V$ and there exist mappings $\eta_i$, $1\leq i\leq d$, satisfying the conditions from Definition \ref{defi:Ricciflat} and additionally the reflexivity condition $\eta_i(\eta_i(x))=x$ for any $i\in \{1,...,d\}$.
We say $G$ is (R)-Ricci-flat if $G$ is (R)-Ricci-flat at each $x \in V$.
\end{defi} 
The complete bipartite graphs $K_{d,d}$ or the Shrikhande graph are important examples of (R)-Ricci-flat graphs   (cf.\ \cite{CKKLP}). Furthermore, in \cite{CKKLP} it is shown that the Bakry-\'Emery condition $CD(2,\infty)$ holds given that the underlying graph is (R)-Ricci-flat. The following example shows an analogue behaviour regarding the $CD_\Upsilon$ condition.
\begin{example}[\textit{(R)-Ricci-flat graphs}]\label{ex:RRicci}
Let the transition rates be chosen in such a way that the underlying graph to $L$ is (R)-Ricci-flat with vertex set $X$. The invariant and reversible measure is given by the uniform measure on $X$.

The convexity argument from Example \ref{ex:Ricci} yields in particular that each summand in \eqref{ricciflatgg2} is non-negative. Applying the reflexivity condition leads to
\begin{align*}
2 \Psi_{2,\Upsilon}(f)(x) &\geq \sum\limits_{i=1}^d \Big(\Upsilon\big(f(x)-f(\eta_i(x))\big) - \Upsilon \big( f(\eta_i(x)) - f(x)\big) \\
&\qquad \qquad + 2\Upsilon'\big(f(\eta_i(x)) - f(x)\big) \big(f(\eta_i(x)) - f(x)\big) \Big)\\
&= \sum\limits_{i=1}^d \nu_{2,1}(f(\eta_i(x))-f(x)) \\
&=4 \Psi_\Upsilon(f)(x) + \sum\limits_{i=1}^d \nu_{2,5}(f(\eta_i(x)) - f(x)).
\end{align*}
We conclude from Lemma \ref{lem:nucdlemma}
that $L$ satisfies $CD_\Upsilon(2,\infty)$. This is optimal in general, since $CD(2,\infty)$ is optimal for any (unweighted) triangle free graph (see Theorem 1.2 in \cite{KRT}) and each $K_{d,d}$ is both, triangle free and (R)-Ricci-flat.
\end{example}
\begin{example}[\textit{Birth-death processes}]\label{ex:birthdeath}
Here, we consider a birth-death process  with state space given by $X=\{0,...,N\}$ for some $N \in \N$ or even the infinite case of $X= \N_0$. Following the notation of \cite{CaDP}, we introduce the functions $a,b: X \to [0,\infty)$ with $a(x)=k(x,x+1)$, $b(x)=k(x,x-1)$, $b(0)=0$, $b(x)>0$ otherwise, and $a(x)>0$ for any $x\in X$ if $X$ is infinite and for any $x \in X \setminus \{N\}$ if $X$ is finite, in which case $a(N)=0$. Moreover, we set  $k(x,y)=0$ whenever $|x-y| >1$. The detailed balance condition now reads as
\begin{equation}\label{dbforbd}
a(x)\pi(x)= b(x+1)\pi(x+1)
\end{equation}
for any $x \in X$ in the infinite case respectively  for any $x \in X \setminus \{N\}$ if $X$ is of finite size. Note that in the infinite case, the measure $\mu$ given by $d\mu=\pi d\#$ is a finite measure if and only if
\begin{equation*}
\sum\limits_{x=1}^\infty \frac{a(x-1)\cdot \cdot \cdot a(0)}{b(x)\cdot \cdot \cdot b(1)} < \infty.
\end{equation*}
The method of \cite{CaDP} applies quite successfully  to birth-death processes if one assumes monotonicity of the rates in the sense that
\begin{equation}\label{monoticityofratesBD}
a(x)\leq a(x-1), \, b(x-1)\leq b(x),
\end{equation}
for any $x \in X \setminus \{0\}$ and, additionally,
the bound
\begin{equation}\label{CPPassumption}
a(x-1) - a(x) + b(x)-b(x-1) \geq \kappa, \quad \forall x \in X \setminus \{0\},
\end{equation}
for some $\kappa>0$. Due to \cite{CaDP} this suffices to deduce the modified logarithmic Sobolev inequalitiy with constant $\frac{\kappa}{2}$. For our purposes it will be crucial to assume that the condition
\begin{equation}\label{BirthDeathCDPalmecondition}
\sqrt{2 \min \{ a(x-1)-a(x),b(x)-b(x-1)\} \big(a(x-1)-a(x) +b(x)-b(x-1)\big)} \geq \kappa
\end{equation}
holds true for some $\kappa>0$ and any $x \in X \setminus \{0\}$.
Note that the condition \eqref{BirthDeathCDPalmecondition} is stronger than \eqref{CPPassumption}. This is not surprising as the $CD_\Upsilon(\kappa,\infty)$ condition (for which \eqref{BirthDeathCDPalmecondition} is suitable) yields pointwise estimates with regard to the entropy method (cf. Section \ref{sec:entrodecayandmLSI}) and hence is a stronger assumption compared to the non-local approach of \cite{CaDP}.

In the subsequent lines we will chronologically show the following assertions:
\begin{itemize}
\item[(i)] Assuming \eqref{monoticityofratesBD} and \eqref{CPPassumption}, $CD(\frac{\kappa}{2},\infty)$ holds.
\item[(ii)] The condition \eqref{monoticityofratesBD} implies $CD_\Upsilon(0,\infty)$.
\item[(iii)] $CD_\Upsilon(0,\infty)$ is best possible for the Poisson case on $\N_0$, even though \eqref{monoticityofratesBD} and \eqref{CPPassumption} are satisfied.
\item[(iv)] If strict monotonicity holds in \eqref{monoticityofratesBD}, for both $a$ and $b$, and \eqref{BirthDeathCDPalmecondition} is valid, then $CD_\Upsilon(\kappa,\infty)$ holds true. 
\end{itemize}
We emphasize that (iv) combined with Corollary \ref{mLSIoutofCDPalme} yields $\textrm{MLSI}(\kappa)$ while the method of \cite{CaDP} (assuming \eqref{monoticityofratesBD} and \eqref{CPPassumption}) implies the modified logarithmic Sobolev inequality with constant $\frac{\kappa}{2}$. For instance, if the left-hand sides of \eqref{CPPassumption} and \eqref{BirthDeathCDPalmecondition} coincide, which can only happen in the finite state space case, our method thus improves the corresponding constant by a factor $2$.

Now, we prove (i). We have by \eqref{Psi2Formel} 
\begin{align*}
&2 \Gamma_2(f)(x)=  L\Gamma(f)(x) - 2 \Gamma(f,Lf)(x)\\
 &\qquad  =\frac{1}{2} \Big[ a(x)a(x+1) \Big((f(x+2)-f(x+1))^2 - 2(f(x+1)-f(x)) (f(x+2) - f(x+1))\Big) \\
&\qquad\quad + a(x)b(x+1) \Big( (f(x)-f(x+1))^2 - 2(f(x+1)-f(x)) (f(x)-f(x+1)) \Big)\\
&\qquad\quad + b(x) a(x-1) \Big((f(x)-f(x-1))^2 - 2(f(x-1)-f(x))(f(x)-f(x-1))\Big) \\
&\qquad\quad + b(x)b(x-1) \Big( (f(x-2)-f(x-1))^2- 2(f(x-1)-f(x))(f(x-2)-f(x-1))\Big) \\
&\qquad\quad + 2 \Big( a(x) (f(x+1)-f(x)) + b(x) (f(x-1)-f(x))\Big)^2 \Big]\\
&\qquad\quad - (a(x)+b(x)) \Gamma(f)(x).
\end{align*}
Here and in the sequel, we set $a(x-1)=b(x-1)=0$ if $x=0$ and in the case of finite $X$, $a(x+1)=b(x+1)=0$ for $x=N$. Note that in those cases the respective choice of extension for $f \in \R^X$ does not play a role.

Viewing the above expression as a function of $f(x+2)$ and $f(x-2)$, it can be minimized by setting $f(x+2)-f(x+1)=f(x+1)-f(x)$ and $f(x-2)-f(x-1)=f(x-1)-f(x)$. Henceforth, we will use the notation  $t= f(x+1)-f(x)$ and $s=f(x-1)-f(x)$. We have
\begin{align*}
4 \Gamma_2(f)(x) &\geq a(x) t^2 \big( 3 b(x+1) - a(x+1)\big) + b(x) s^2 \big( 3 a(x-1) - b(x-1)\big) + 2 \big( a(x) t + b(x) s \big)^2\\
&\quad
- (a(x)+b(x)) \big( a(x)t^2 + b(x) s^2 \big) \\
&= a(x)t^2 \big( b(x+1) -b(x)+a(x)-a(x+1)\big) \\
&\quad + b(x)s^2 \big(  a(x-1) -a(x) + b(x) - b(x-1) \big) \\
&\quad + 2 \big(b(x+1)a(x)t^2 + a(x-1)b(x) s^2 + 2 a(x)b(x)ts \big)  \\
&\geq \kappa \big(a(x) t^2 +  b(x) s^2 \big) + 2 a(x)b(x) (s+t)^2 \\
&\geq 2\kappa \Gamma(f)(x),
\end{align*}
where we have applied \eqref{monoticityofratesBD} and \eqref{CPPassumption}. Hence, $CD(\frac{\kappa}{2},\infty)$ holds true.

 Regarding (ii), we will follow a similar approach. In this case, we observe by \eqref{Psi2Formel}
\begin{align*}
&2 \Psi_{2,\Upsilon}(f)(x)= L \Psi_\Upsilon (f)(x) - B_{\Upsilon'}(f,Lf)(x)\\
 &\qquad = a(x)a(x+1) \Big(\Upsilon(f(x+2)-f(x+1)) - \Upsilon'(f(x+1)-f(x)) (f(x+2) - f(x+1))\Big) \\
&\qquad\quad + a(x)b(x+1) \Big( \Upsilon (f(x)-f(x+1)) - \Upsilon'(f(x+1)-f(x)) (f(x)-f(x+1)) \Big)\\
&\qquad\quad + b(x) a(x-1) \Big( \Upsilon(f(x)-f(x-1)) - \Upsilon'(f(x-1)-f(x))(f(x)-f(x-1))\Big) \\
&\qquad\quad + b(x)b(x-1) \Big( \Upsilon(f(x-2)-f(x-1))- \Upsilon'(f(x-1)-f(x))(f(x-2)-f(x-1))\Big) \\
&\qquad\quad + \big( a(x) \Upsilon'(f(x+1)-f(x)) + b(x) \Upsilon'(f(x-1)-f(x)\big) L f(x) \\
&\qquad\quad - (a(x)+b(x)) \Psi_\Upsilon(f)(x).
\end{align*}
Again, we  minimize the above expression by choosing the values of $f(x+2)$ and $f(x-2)$ accordingly. To that aim, it can be easily checked that the mapping $r \mapsto \Upsilon(r-c_1)- \Upsilon'(c_1-c_2)(r-c_1) $ with parameters $c_1,c_2 \in \R$ achieves the global minimum for $r-c_1=c_1-c_2$. Recalling the notation $t= f(x+1)-f(x)$ and $s=f(x-1)-f(x)$, we obtain
\begin{align*}
2 \Psi_{2,\Upsilon}(f)(x) &\geq a(x)a(x+1)\big( \Upsilon(t) - \Upsilon'(t)t \big) + a(x)b(x+1) \big( \Upsilon(-t) + \Upsilon'(t)t \big)\\
&\; +b(x)a(x-1) \big( \Upsilon(-s) + \Upsilon'(s)s \big) + b(x)b(x-1) \big(\Upsilon(s)- \Upsilon'(s)s\big) \\
&\; + \big( (a(x) \Upsilon'(t)+b(x)\Upsilon'(s) \big)\big( a(x)t+b(x)s\big) - (a(x)+b(x))\big(a(x)\Upsilon(t)+b(x)\Upsilon(s)\big)\\
&= a(x) \big[\Upsilon(t) (a(x+1)-a(x)-b(x)) + \Upsilon(-t) b(x+1) \\
&\qquad\qquad+ \Upsilon'(t)t(b(x+1)+a(x)-a(x+1))\big]\\
&\; + b(x) \big[ \Upsilon(s) (b(x-1)-b(x)-a(x)) + \Upsilon(-s)a(x-1) \\
&\qquad\qquad + \Upsilon'(s)s ( a(x-1)+b(x)-b(x-1))\big]\\
&\; + a(x)b(x) \big( \Upsilon'(t)s + \Upsilon'(s)t \big). 
\end{align*}
The last summand, which is of mixed type with respect to $s$ and $t$, can be treated  by the inequality $e^\beta \beta - e^\beta \alpha - e^\beta + e^\alpha \geq 0$, $\alpha,\beta \in \R$, which follows from convexity of the exponential function. In fact, we conclude
\begin{equation}\label{mixedterm_birthdeath}
\begin{split}
\Upsilon'(t)s + \Upsilon'(s)t = e^t s - s + e^s t -t &\geq e^t-e^t t -e^{-t} - t + e^s - e^s s - e^{-s} - s \\
&= \Upsilon(t) - \Upsilon(-t) - \Upsilon'(t)t + \Upsilon(s) - \Upsilon(-s) - \Upsilon'(s)s.
\end{split}
\end{equation}
The major advantage of the right-hand side of \eqref{mixedterm_birthdeath} is that no mixed terms occur anymore, which allows to consider the terms involving $s$ resp. $t$ separately.
We have 
\begin{align*}
&\Upsilon(t) \big(a(x+1)-a(x)-b(x)\big) + \Upsilon(-t) b(x+1) + \Upsilon'(t)t \big(b(x+1)+a(x)-a(x+1)\big)\\
&\qquad\quad + b(x) \big( \Upsilon(t) - \Upsilon(-t) - \Upsilon'(t)t \big)\\
&\qquad = b_{x,+} \big( \Upsilon'(t)t + \Upsilon(-t) \big) + a_{x,+}  \big( \Upsilon'(t)t - \Upsilon(t) \big),
\end{align*}
where $a_{x,+}= a(x)-a(x+1)$ and $b_{x,+}=b(x+1)-b(x)$ and analogously 
\begin{align*}
&\Upsilon(s) \big(b(x-1)-b(x)-a(x)\big) + \Upsilon(-s)a(x-1) + \Upsilon'(s)s \big( a(x-1)+b(x)-b(x-1)\big) \\
&\qquad\quad + a(x) \big( \Upsilon(s) - \Upsilon(-s) - \Upsilon'(s)s \big) \\
&\qquad =  b_{x,-} \big( \Upsilon'(s)s - \Upsilon(s)\big) + a_{x,-} \big( \Upsilon'(s)s + \Upsilon(-s)\big),
\end{align*} 
where $a_{x,-}=a(x-1)-a(x)$ and $b_{x,-}=b(x)-b(x-1)$.

In total, we thus have
\begin{equation}\label{FormulaBirthDeathPsi2}
\begin{split}
2 \Psi_{2,\Upsilon}(f)(x) &\geq a(x) \big( b_{x,+} ( \Upsilon'(t)t + \Upsilon(-t)) + a_{x,+} (\Upsilon'(t)t - \Upsilon(t)) \big) \\
&\;+ b(x) \big(b_{x,-}(\Upsilon'(s)s - \Upsilon(s)) + a_{x,-} ( \Upsilon'(s)s + \Upsilon(-s)) \big).
\end{split}
\end{equation}
Whenever $b_{x,-},a_{x,-},b_{x,+},a_{x,+} \geq 0$ we deduce that $CD_\Upsilon(0,\infty)$ is valid in $x $. Hence, the global monotonicity condition \eqref{monoticityofratesBD}  implies  $CD_\Upsilon(0,\infty)$, which shows (ii).

The Poisson case on $\N_0$ shows that this bound is in general sharp, even if the condition \eqref{CPPassumption} is additionally assumed. Indeed, let 
$a(x)=\lambda$, $\lambda>0$, and $b(x)=x$ for any $x \in \N_0$. Then the corresponding Markov chain coincides with the Poisson case of the birth-death process on $\N_0$ with invariant probability measure given by $d\mu=\pi_\lambda d\#$ and $\pi_\lambda(x) = \frac{\lambda^x}{x!}e^{-\lambda}$, $x \in \N_0$. The convexity inequality \eqref{mixedterm_birthdeath} becomes an equality if $s=-t$. For $n \geq 2$ and $\tau>0$, we define the functions  $f_{n,\tau}: \N_0 \to \R$ by $f_{n,\tau}(x)=\tau (x-n)$ for $x \in \{n-2,n-1,n+1,n+2\}$ and $0$ otherwise.
For this particular choice we have equality in \eqref{FormulaBirthDeathPsi2} at $x=n$. More precisely, 
\begin{equation*}
2 \Psi_{2,\Upsilon}(f_{n,\tau})(n) = \lambda \big( \Upsilon'(\tau)\tau + \Upsilon(-\tau)\big) + n \big( \Upsilon'(-\tau) (-\tau)- \Upsilon(-\tau)\big).
\end{equation*}
Consequently, for $\kappa>0$ the estimate $2\Psi_{2,\Upsilon}(f_{n,\tau})(n) \geq \kappa \Psi_{\Upsilon}(f_{n,\tau})(n)$ is equivalent to
\begin{equation}\label{PoissonPsi2}
\lambda \big( \Upsilon'(\tau)\tau + \Upsilon(-\tau) - \kappa \Upsilon(\tau)\big) + n \big( \Upsilon'(-\tau)(-\tau) - (1+\kappa) \Upsilon(-\tau) \big) \geq 0.
\end{equation}
Choosing $\tau>0$ large enough such that $\Upsilon'(-\tau)(-\tau) - (1+\kappa) \Upsilon(-\tau)<0$ and sending $n\to \infty$ yields a contradiction to \eqref{PoissonPsi2}. This shows assertion (iii). Interestingly, since here the assumptions  \eqref{monoticityofratesBD} and \eqref{CPPassumption} are satisfied, this gives an example of a Markov chain satisfying $CD(\frac{1}{2},\infty)$ by assertion (i), where a   positive $CD_\Upsilon$ curvature condition is impossible to hold.

Next we aim for positive $CD_\Upsilon$ curvature bounds. We assume that $b_{x,+},a_{x,-} > 0$ for any $x \in X$. Note that this does not violate the conditions on the boundary, as we have $b_{0,-}=0$ and, in the finite case, $a_{N,+}=0$. Recall the inequality \eqref{FormulaBirthDeathPsi2}.  The estimate
\begin{align*}
a(x) \big( b_{x,+} ( \Upsilon'(t)t + \Upsilon(-t)) + a_{x,+} (\Upsilon'(t)t - \Upsilon(t)) \big) \geq 2 \kappa a(x) \Upsilon(t)  , \quad
 t \in \R,
\end{align*}
(which is clear for $x=N$ in the finite case) is equivalent for $x \in X$ resp. $x \in X \setminus \{N\}$ in the finite case to 
\begin{equation}\label{xpluscondition}
b_{x,+} \nu_{1+\frac{a_{x,+}}{b_{x,+}},\frac{a_{x,+}}{b_{x,+}}+\frac{2\kappa}{b_{x,+}}}(t) \geq 0 , \quad
 t \in \R,
\end{equation} 
and analogously
\begin{align*}
b(x) \big(b_{x,-}(\Upsilon'(s)s - \Upsilon(s)) + a_{x,-} ( \Upsilon'(s)s + \Upsilon(-s)) \big) \geq 2\kappa b(x) \Upsilon(s) , \quad s \in \R,
\end{align*} 
(which is clear for $x=0$) is equivalent for $x \in X \setminus \{0\}$ to 
\begin{equation}\label{xminuscondition}
a_{x,-} \nu_{1+\frac{b_{x,-}}{a_{x,-}},\frac{b_{x,-}}{a_{x,-}}+\frac{2\kappa}{a_{x,-}}}(s) \geq 0 , \quad
 s \in \R.
\end{equation} 
We want to employ Lemma \ref{App_notlin}(ii) (with $\lambda=1 + \frac{a_{x,+}}{b_{x,+}}$ for \eqref{xpluscondition} and $\lambda= 1+\frac{b_{x,-}}{a_{x,-}}$ for \eqref{xminuscondition}). A short computation (as in Example \ref{ex:weighted4cyc}) shows that if
\begin{equation}\label{BirthDeathconditionwithxminus}
\kappa \leq \sqrt{2 \min \{ a_{x,-},b_{x,-}\} (a_{x,-}+b_{x,-})}
\end{equation}
holds for $x \in X \setminus \{0\}$ then \eqref{xminuscondition} is satisfied. But since $a_{x-1,+}=a_{x,-}$ and $b_{x-1,+}=b_{x,-}$, \eqref{xpluscondition} also follows from \eqref{BirthDeathconditionwithxminus}. Clearly, \eqref{BirthDeathCDPalmecondition} and \eqref{BirthDeathconditionwithxminus} are the same and hence (iv) is established.

We conclude this example with a remark on the infinite state space case. Note that \eqref{monoticityofratesBD} implies $\inf_{x \in \N} a_{x,-}=0$. Hence, in order to achieve a global positive curvature bound by \eqref{BirthDeathCDPalmecondition}, we need to choose the sequence $(b_{x,-})_{x \in \N}$ such that $\inf_{x \in \N} a_{x,-}b_{x,-} \geq c$ for some  constant $c>0$. This implies in particular  that $b(x)\to \infty$, and hence $M_1(x) \to \infty$,  as $x \to \infty$. Concerning the modified logarithmic Sobolev inequality  the assumptions $M_1 \in \ell^2(\mu)$ and $M_2 \in \ell^1(\mu)$, which were imposed in Theorem \ref{ThmSecTDEnt} and Corollary \ref{mLSIoutofCDPalme},  boil down to assume that $b \in \ell^2(\mu)$. This is immediate for $M_1 \in \ell^2(\mu)$ and can be easily deduced from the detailed balance condition \eqref{dbforbd} for $M_2 \in \ell^1(\mu)$. The ratio test implies that $b \in \ell^2(\mu)$ is satisfied if there exists some constant $q \in (0,1)$ such that
\begin{equation}\label{ratiotestbirthdeath}
\frac{a(x)b(x+1)}{b(x)^2}\leq q < 1,
\end{equation}
for all sufficiently large $x$, where again \eqref{dbforbd} has been used. Finally, we illustrate the infinite state space case by the following concrete example. We choose the transition rates as
\begin{equation*}
a(x)= \sum_{n=x+1}^\infty n^{-\alpha} , \, b(x) = \sum_{n=0}^x n^\beta
\end{equation*} 
for any $x \in \N_0$ and parameters $\beta \geq \alpha >1$. Then \eqref{ratiotestbirthdeath} is apparently satisfied. Moreover, we have $a(x-1)-a(x)= x^{-\alpha}$ and $b(x)-b(x-1)= x^\beta$ for any $x \in \N$ and thus \eqref{BirthDeathCDPalmecondition} turns to
\begin{equation*}
\sqrt{2 (x^{-2\alpha} + x^{\beta - \alpha})}\geq \kappa
\end{equation*} 
for any $x \in \N$. Clearly, $\kappa$ can at least be chosen as $\sqrt{2}$ and hence $\mathrm{MLSI}(\sqrt{2})$ holds. On the other hand, the method of \cite{CaDP} yields at most a modified logarithmic Sobolev inequality with constant $1$.

\end{example}
\begin{example}[\textit{Weighted graphs with large girth}]\label{ex:nolowerbound}
Now we consider the situation, where the underlying graph has girth at least $5$, i.e. if there exist a subset $\{x_1,...,x_g\}\subset X$, of pairwise distinct elements, with $k(x_i,x_{i+1})>0$ for any $i \in \{1,...,g-1\}$ and $k(x_g,x_1)>0$, then $g \geq 5$ must hold. Further,  we assume the existence of $x,y \in X$ with $k(x,y)>0$ (and hence $k(y,x)>0$) and such that 
\begin{equation}\label{minusinftycondition}
M_1(x)+M_1(y)-2 \big( k(x,y)+k(y,x)\big) > 0
\end{equation}
holds true. We claim that this implies that $CD_\Upsilon(\kappa,\infty)$ fails in $x$ (and by symmetry also in $y$) for all $\kappa \in \R$ .

We remark that any (connected) unweighted graph (here $M_1(x)$ and $M_1(y)$  equal the degree of $x$ and $y$ respectively)  with $|X|\geq 5$ and girth at least $5$ (in particular trees) that is neither a path nor a cycle satisfies \eqref{minusinftycondition} for some $x,y \in X$. Indeed, choose $x \in X$ such that $x$ has at least $3$ neighbors (which is possible since the underlying graph is neither a path nor a cycle) and $y$ a neighbor of $x$ that has at least one neighbor distinct from $x$ (which exists as $|X| \geq 5$). Then \eqref{minusinftycondition} holds for $x$ and $y$.

For the discrete Laplacian on $\Z$ we have equality in \eqref{minusinftycondition} at any $x \in \Z$ and $y= x \pm 1$, as well as $CD_\Upsilon(0,\infty)$ holds true by Example \ref{ex:Ricci}. In this sense, the condition \eqref{minusinftycondition} is sharp.

Note that \eqref{minusinftycondition} also applies to birth-death processes, which have been discussed in Example \ref{ex:birthdeath}. In fact, using the notation of Example \ref{ex:birthdeath}, \eqref{minusinftycondition} translates to 
\begin{equation*}
a(x)+b(x) + a(x \pm 1) + b(x \pm 1) - 2 \big( k(x, x \pm 1) + k(x \pm 1, x) \big) > 0
\end{equation*}
and hence to the condition that $ 0 > a_{x,+}+ b_{x,+}$ resp. $0 > a_{x,-}+b_{x,-}$ holds true for some $x \in X$.

Let us now prove the claim. For $z \in  X$, we denote by $S_1(z)= \{ v \in X \setminus \{z\} : \, k(z,v)>0\}$ the neighborhood of $z$. Let $x,y \in X$ be given as in \eqref{minusinftycondition}, $S_1(x)=\{y_1,...,y_m\}$, $m \geq 1$, and let w.l.o.g. $y_1=y$. We remark that the arguments below also apply to the case that $S_1(x)$ consists of infinitely many elements, as well as to the special case of $m=1$ (where the situation is easier as the corresponding sums over the index set $\{2,...,m \}$ are empty). Since the girth of the graph is at least $5$, we have $y_j \notin S_1(y_i)$ and $S_1(y_i) \cap S_1(y_j) = \{x\}$ if $i \neq j$. Thus, we can choose for some $\tau \in \R$ a function $f_\tau \in \R^X$ such that $\tau= f_\tau(y_i) - f_\tau(x)$ for any $i \in \{2,...,m\}$, $f_\tau(y_1)-f_\tau(x)=-\tau$ and  $f_\tau(z) - f_\tau(y_j)=f_\tau(y_j)-f_\tau(x)$ for any $z \in S_1(y_j)\setminus \{x\}$, $j \in \{1,...,m\}$. For this particular choice, \eqref{Psi2Formel} turns to
\begin{align*}
2 \Psi_{2,\Upsilon}&(f_\tau)(x) \\&= \sum_{i=1}^m k(x,y_i) \big( M_1(y_i) - k(y_i,x)\big) \big(\Upsilon(f_\tau(y_i)-f_\tau(x)) - \Upsilon'(f_\tau(y_i) - f_\tau(x))(f_\tau(y_i)-f_\tau(x)) \big) \\
&\quad + \sum_{i=1}^m k(x,y_i)k(y_i,x) \big( \Upsilon(f_\tau(x)-f_\tau(y_i)) +\Upsilon'(f_\tau(y_i)-f_\tau(x))(f_\tau(y_i)-f_\tau(x))\big) \\
&\quad + \sum_{i=1}^m k(x,y_i)^2 \Upsilon'(f_\tau(y_i)-f_\tau(x))(f_\tau(y_i)-f_\tau(x)) - M_1(x) \sum_{i=1}^m k(x,y_i)\Upsilon(f_\tau(y_i)-f_\tau(x))\\
&\quad + \sum_{\substack{i,j \in \{1,...,m\} \\ i \neq j}}k(x,y_i)k(x,y_j) \big(\Upsilon'(f_\tau(y_i)-f_\tau(x))(f_\tau(y_j)-f_\tau(x))\big).
\end{align*}
Further, we have
\begin{align*}
&\sum_{\substack{i,j \in \{1,...,m\} \\ i \neq j}}k(x,y_i)k(x,y_j) \big(\Upsilon'(f_\tau(y_i)-f_\tau(x))(f_\tau(y_j)-f_\tau(x))\big) \\
&\qquad= \sum_{j=2}^m k(x,y_1)k(x,y_j)\big( \Upsilon'(-\tau)\tau- \Upsilon'(\tau)\tau \big) +  \sum_{\substack{i,j \in \{2,...,m\} \\ i \neq j}}k(x,y_i)k(x,y_j)\Upsilon'(\tau)\tau  \\
&\qquad= k(x,y_1)\Upsilon'(-\tau)\tau \big( M_1(x)- k(x,y_1)\big)+ \sum_{i=2}^m k(x,y_i) \Upsilon'(\tau)\tau \big(M_1(x) - k(x,y_i) - 2k(x,y_1)\big).
\end{align*}
Combininig the previous identities yields
\begin{align*}
&2\Psi_{2,\Upsilon}(f_\tau)(x) \\&= \sum_{i=2}^m k(x,y_i)\Big[ \Upsilon'(\tau)\tau \big( M_1(x)-M_1(y_i) + 2 ( k(y_i,x)-k(x,y_1))\big)\\
&\qquad\qquad\qquad  + \Upsilon(-\tau)k(y_i,x) + \Upsilon(\tau) \big(M_1(y_i)-M_1(x)- k(y_i,x)\big)\Big]\\
&\quad + \Upsilon'(-\tau)\tau  k(x,y_1) \big( M_1(x)+M_1(y_1) - 2(k(x,y_1)+k(y_1,x))\big) + \Upsilon(\tau)k(x,y_1)k(y_1,x)\\
&\quad+ k(x,y_1) \Upsilon(-\tau)\big( M_1(y_1)-M_1(x) - k(y_1,x)\big)\\
&= C_{1,x}\Upsilon'(\tau)\tau + C_{2,x}\Upsilon(-\tau) + C_{3,x} \Upsilon(\tau)  \\
&\quad\quad + k(x,y_1)\Upsilon'(-\tau)\tau \big( M_1(x)+M_1(y_1) - 2(k(x,y_1)+k(y_1,x))\big),
\end{align*}
where $C_{1,x},C_{2,x},C_{3,x} \in \R$ are constants that depend on $x$ and the transition rates, but not  on $\tau$. Now, we observe
\begin{equation}\label{noCDforweightedgirth}
\begin{split}
&2 \big(\Psi_{2,\Upsilon}(f_\tau)(x) - \kappa \Psi_\Upsilon(f_\tau)(x)\big) \\&\quad=  C_{1,x}\Upsilon'(\tau)\tau + \big(C_{2,x}-2\kappa k(x,y_1)\big)\Upsilon(-\tau) + \big(C_{3,x}-2\kappa(M_1(x)-k(x,y_1))\big) \Upsilon(\tau) \\& \qquad+ k(x,y_1)\Upsilon'(-\tau)\tau \big( M_1(x)+M_1(y_1) - 2(k(x,y_1)+k(y_1,x))\big),
\end{split}
\end{equation} 
where $\kappa \in \R$. Sending $\tau \to -\infty$, the dominating term in the right hand side of \eqref{noCDforweightedgirth} is given by $\Upsilon'(-\tau)\tau$, which tends to $-\infty$. By \eqref{minusinftycondition}, we thus can find for any $\kappa \in \R$ some $\tau<0$ such that \eqref{noCDforweightedgirth} is negative. Hence, $CD_\Upsilon(\kappa,\infty)$ is impossible to hold in $x$ for any $\kappa \in \R$.
\end{example}
The latter example has the very interesting consequence that the $CD_\Upsilon$ condition is not robust against small perturbations, which will be demonstrated in the following example.
\begin{example}[\textit{Perturbed birth-death process}]\label{ex:pertubedbirthdeath}
Let $L$ be the generator of a birth-death process on  the state space $X=\{0,...,N\}$ with $N\geq 4$, resp. $X=\N_0$, with invariant and reversible measure $\mu$ given by $d\mu= \pi d \#$. The transitions rates $\big(k(x,y)\big)_{x,y \in X}$ are given as explained in Example \ref{ex:birthdeath} and the notation is taken from there. Now, we add an edge to the underlying graph. Precisely, let $x_0,y_0 \in X$ with $y_0 \geq x_0+4$ be fixed and let $\lambda_{x_0,y_0} = \frac{\pi(x_0)}{\pi(y_0)}$. Let $L_\varepsilon$ denote the Markov generator whose transition rates are given by $k_\varepsilon(x,y)=k(x,y)$, $x \neq y$, if $(x,y) \notin \{(x_0,y_0),(y_0,x_0) \}$, $k_\varepsilon(x_0,y_0)=\varepsilon$ and $k_\varepsilon(y_0,x_0)=\lambda_{x_0,y_0}\varepsilon$. Note that $\mu$ is also reversible (and hence invariant) for $L_\varepsilon$ since
\begin{equation*}
k_\varepsilon(y_0,x_0)\pi(y_0)= \varepsilon \lambda_{x_0,y_0} \pi(y_0) = k_\varepsilon(x_0,y_0) \pi(x_0)
\end{equation*}
holds. Besides that we obtain
\begin{align*}
M_1(x_0) + M_1(y_0) - 2\big(k_\varepsilon(x_0,y_0&) + k_\varepsilon(y_0,x_0)\big) \\
&= a(x_0)+b(x_0) + \varepsilon + a(y_0) + b(y_0) + \lambda_{x_0,y_0} \varepsilon - 2\big(\varepsilon + \varepsilon \lambda_{x_0,y_0} \big)\\
&= a(x_0)+b(x_0)+a(y_0)+b(y_0) - \varepsilon \big( 1 + \lambda_{x_0,y_0}\big)
\end{align*}
and consequently there exists some $\varepsilon_0=\varepsilon_0(x_0,y_0)>0$ such that condition \eqref{minusinftycondition} is satisfied for $x_0$ and $y_0$ and any $\varepsilon \in (0,\varepsilon_0]$. In particular, we can choose for given $\kappa_0>0$ the transition rates $a$ and $b$ such that $L$ satisfies $CD_\Upsilon(\kappa_0,\infty)$ (see Example \ref{ex:birthdeath}) while $L_\varepsilon$ fails to satisfy $CD_\Upsilon(\kappa,\infty)$ for any $\kappa \in \R$ and $\varepsilon>0$ small enough by Example \ref{ex:nolowerbound}.
\end{example}
In view of Example \ref{ex:nolowerbound} the natural question arises, whether there exists a Markov generator $L$ that does not satisfy $CD_\Upsilon(0,\infty)$ but $CD_\Upsilon(-\kappa,\infty)$ for some $\kappa>0$. We answer this question positively by the following example.
\begin{example}[\textit{The unweighted 3-star}]\label{ex:3star}
We consider $X=\{x_*,a_1,a_2,a_3\}$ with $k(x_*,a_i)=k(a_i,x_*)=1$ for any $i\in \{1,2,3\}$ and $k(a_i,a_j)=0$ if $i \neq j$. The underlying graph is given by the unweighted $3$-star. Note that this is the only unweighted graph with girth at least $5$ that is neither a path nor a cycle to which \eqref{minusinftycondition} does not apply. Let $f \in \R^X$ be arbitrary chosen. We set $r= f(a_1)-f(x_*)$, $s=f(a_2)-f(x_*)$ and $t= f(a_3)-f(x_*)$. By \eqref{Psi2Formel}, we have for $\kappa \geq 0$
\begin{align}
\nonumber 2 &\big( \Psi_{2,\Upsilon}(f)(x_*) + \kappa \Psi_\Upsilon(f)(x_*)\big) \\ \nonumber
&= \sum_{i=1}^3 \Upsilon'(f(a_i)-f(x_*))(f(a_i)-f(x_*)) + \Upsilon(f(x_*)-f(a_i)) - (3-2\kappa) \Upsilon(f(a_i)-f(x_*))\\
\nonumber & \quad + \sum_{i=1}^3 \Upsilon'(f(a_i)-f(x_*)) \sum_{j=1}^3 (f(a_j)-f(x_*))\\
&=  \nu_{1,3-2\kappa}(r) + \nu_{1,3-2\kappa}(s) + \nu_{1,3-2\kappa}(t) + (r+s+t) (e^r + e^s + e^t -3)\label{3starnuforumla}.
\end{align}
First we aim to show that $CD_\Upsilon(0,\infty)$ is not satisfied in $x_*$. Having the approach of Example \ref{ex:nolowerbound} in mind, we choose $s=t$ and $r=-t$. Then \eqref{3starnuforumla} (with $\kappa=0$) reads as
\begin{align*}
2 \Psi_{2,\Upsilon}(f)(x_*) &= \nu_{1,3}(-t) + 2\nu_{1,3}(t) + t (e^{-t} + 2 e^t - 3) \\
&= 4 \Upsilon'(t)t - 5 \Upsilon(t) - \Upsilon(-t).
\end{align*}
Thus, choosing $t<0$ with $|t|$ sufficiently large shows that the latter becomes negative and consequently $CD_\Upsilon(0,\infty)$ fails in $x_*$.

Next, we aim to show the existence of $\kappa>0$ such that $CD_\Upsilon(-\kappa,\infty)$ holds. We assume in the sequel that $\kappa \geq \frac{3}{2}$. In particular,  $\nu_{1,3-2\kappa}$ is non-negative. By \eqref{3starnuforumla} there is nothing to show if  $r,s$ and $t$ all have the same sign. In case that $r+s+t \geq 0$ it follows from Jensen's inequality that
\begin{align*}
e^r + e^s + e^t \geq 3e^{\frac{1}{3}(r+s+t)} \geq 3.
\end{align*}
Hence, we are left with the case of $r+s+t<0$, which will be assumed from now on. We write
\begin{equation}\label{formula3starPsi2}
\begin{split}
2 \big( \Psi_{2,\Upsilon}(f)(x_*) + \kappa \Psi_\Upsilon(f)(x_*)\big) &= (2\kappa - 3) \big( \Upsilon(r) + \Upsilon(s) + \Upsilon(t) \big) + \Upsilon(-r) + \Upsilon(-s) + \Upsilon(-t)\\
&\quad + \Upsilon'(r)(s+t+2r) + \Upsilon'(s)(r+t+2s) + \Upsilon'(t)(r+s+2t).
\end{split} 
\end{equation}
We infer from \eqref{formula3starPsi2} that for any $R>0$ there exists some $\kappa_R \geq \frac{3}{2}$ such that the expression $\Psi_{2,\Upsilon}(f)(x_*)+\kappa \Psi_\Upsilon(f)(x_*)$ is non-negative for any $\kappa\geq \kappa_R$ provided that $(r,s,t) \in B_R(0)$ (where $B_R(0)$ denotes the  ball with center $(0,0,0)$ and radius $R$ with respect to the euclidean distance in $\R^3$).

\textit{1. case:} One element of $r,s,t$ is positive while the other two are non-positive (w.l.o.g. $r>0$, $s,t \leq 0$).

Clearly, $r+s+2t, r+t+2s <0$ and $\Upsilon'(s),\Upsilon'(t)\leq 0$. Thus, the only summand in the right-hand side of \eqref{formula3starPsi2} that may be negative is given by $\Upsilon'(r)(s+t+2r)$. By convexity we deduce
\begin{align*}
\Upsilon(-t)+\Upsilon(-s) + \Upsilon'(r)(s+t+2r) &\geq 2 \Big( \Upsilon\big(-\frac{t+s}{2}\big) + \Upsilon'(r) \big( \frac{s+t}{2}+r\big)\Big)\\
&\geq 2 \Big( \Upsilon(r) - \Upsilon'(r) \big( \frac{s+t}{2}+r\big)+ \Upsilon'(r) \big( \frac{s+t}{2}+r\big)\Big)\\ 
&= 2\Upsilon(r) \geq 0
\end{align*}
and hence the term $\Psi_{2,\Upsilon}(f)(x_*)+\kappa \Psi_\Upsilon(f)(x_*)$ is non-negative by \eqref{formula3starPsi2}.

\textit{2. case:} One element of $r,s,t$ is negative while the other two are non-negative (w.l.o.g. $r<0$, $s,t \geq 0$).

Clearly, $s+t+2r<0$ and $\Upsilon'(r)<0$. Thus, the only summands that may be negative in the right-hand side of \eqref{formula3starPsi2} are given by $\Upsilon'(t)(r+s+2t)$ and $\Upsilon'(s)(r+t+2s)$. As $\Upsilon'(t),\Upsilon'(s) \geq 0$, we are left with $-r>s+2t$ resp. $-r>t+2s$. Due to the asymptotic behavior of the following summands there exists a radius $R_0>0$ such that
\begin{equation*}
\Upsilon(-r) + \Upsilon'(t)(r+s+2t) + \Upsilon'(s)(r+t+2s) \geq 0
\end{equation*}
for any $(r,s,t) \in B_{R_0}(0)^C$ with $r<0$, $s,t\geq 0$ and $-r>t+2s$ or $-r>s+2t$. Consequently, $\Psi_{2,\Upsilon}(f)(x_*)+\kappa \Psi_\Upsilon(f)(x_*)$ is non-negative by \eqref{formula3starPsi2} for any $\kappa \geq \kappa_{R_0}$ in the present case.

Overall, we deduce that $CD_\Upsilon(-\kappa_{R_0},\infty)$ holds at $x_*$. In $a_j$, $j \in \{1,2,3\}$ the situation is better. W.l.o.g. let $j=1$. After minimizing $\Psi_{2,\Upsilon}(f)(a_1)$ with respect to   $f(a_2)$ and $f(a_3)$ (analogously as explained in Example \ref{ex:birthdeath}), we read from \eqref{Psi2Formel}
\begin{align*}
2 \Psi_{2,\Upsilon}(f)(a_1) = \Upsilon(f(x_*)-f(a_1)) + \Upsilon(f(a_1)-f(x_*)) \geq 0
\end{align*} 
and hence $CD_\Upsilon(0,\infty)$ holds in $a_1$.
\end{example}
In \cite{HL16} it was shown that an unweighted graph with girth at least $5$ satisfies the Bakry-\'Emery condition $CD(0,\infty)$ if and only if it is either a (finite or infinite) path or a cycle of length at least $5$ or the $3$-star, which has been considered in Example \ref{ex:3star}. Combining Example \ref{ex:nolowerbound} with Example \ref{ex:3star} and Example \ref{ex:Ricci} (paths and cycles are Ricci-flat), the analogue of the mentioned result regarding the $CD_\Upsilon$ condition is the following.
\begin{corollary}
Let the underlying graph to $L$ be unweighted with girth at least $5$. Then $L$ satisfies $CD_\Upsilon(0,\infty)$ if and only if the underlying graph  is either a (finite or infinite) path or a cycle of length at least $5$.
\end{corollary}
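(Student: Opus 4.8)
The plan is to establish the two implications of the equivalence separately, assembling results already obtained in the examples above. For the \emph{sufficiency} (paths and cycles of length at least $5$ satisfy $CD_\Upsilon(0,\infty)$) I would invoke the Ricci-flat computation together with the birth-death analysis, whereas for the \emph{necessity} I would reduce, via Proposition \ref{CDPalmeandCD}, to the Bakry-\'Emery classification of \cite{HL16} and then exclude the $3$-star using Example \ref{ex:3star}.

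For the sufficiency I would note that a cycle of length at least $5$ is $2$-regular and Ricci-flat in the sense of Definition \ref{defi:Ricciflat} (take $\eta_1(u)=u+1$, $\eta_2(u)=u-1$ with indices read modulo the cycle length, so that condition (iii) holds), and likewise the two-sided infinite path $\Z$ is Ricci-flat; hence Example \ref{ex:Ricci} gives $CD_\Upsilon(0,\infty)$ for these graphs. A finite path on $\{0,\dots,N\}$ and the one-sided infinite path $\N_0$ are \emph{not} regular and therefore not covered directly by Definition \ref{defi:Ricciflat}; these I would instead view as birth-death processes with constant rates $a\equiv b\equiv 1$ away from the boundary. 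Such rates trivially satisfy the monotonicity condition \eqref{monoticityofratesBD}, so Example \ref{ex:birthdeath}(ii) yields $CD_\Upsilon(0,\infty)$ in these cases as well.

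For the necessity I would suppose that $L$ satisfies $CD_\Upsilon(0,\infty)$. By Proposition \ref{CDPalmeandCD} the generator then also satisfies the classical Bakry-\'Emery condition $CD(0,\infty)$. Since the underlying graph is unweighted with girth at least $5$, the classification of \cite{HL16} forces it to be a (finite or infinite) path, a cycle of length at least $5$, or the $3$-star. The $3$-star is excluded because Example \ref{ex:3star} shows that $CD_\Upsilon(0,\infty)$ already fails at its central vertex $x_*$. Hence the graph must be a path or a cycle of length at least $5$, which is the claimed characterization.

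The main point requiring care, and the step I expect to be the genuine obstacle, is the sufficiency for \emph{finite} and one-sided infinite paths: the slogan ``paths and cycles are Ricci-flat'' is slightly imprecise, since the $d$-regularity demanded by Definition \ref{defi:Ricciflat} fails at path endpoints, so one cannot literally apply Example \ref{ex:Ricci} there; routing these cases through Example \ref{ex:birthdeath}(ii) repairs the gap. As a self-contained alternative to citing \cite{HL16} in the necessity direction, one could instead argue that for $|X|\ge 5$ every connected girth-$\ge 5$ graph which is neither a path nor a cycle satisfies \eqref{minusinftycondition} (Example \ref{ex:nolowerbound}), while the only such graph on at most four vertices is the $3$-star (Example \ref{ex:3star}); this enumeration is correct but more laborious than the direct appeal to the known $CD(0,\infty)$-classification.
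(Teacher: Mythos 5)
Your proof is correct, and on the necessity direction it takes a genuinely different route from the paper. The paper's argument is internal: it combines Example \ref{ex:nolowerbound} (every connected unweighted graph with girth at least $5$ on at least $5$ vertices that is neither a path nor a cycle satisfies \eqref{minusinftycondition}, so that $CD_\Upsilon(\kappa,\infty)$ fails for \emph{every} $\kappa\in\R$) with Example \ref{ex:3star} (the $3$-star is the unique remaining non-path, non-cycle case with girth at least $5$, and $CD_\Upsilon(0,\infty)$ fails at its center $x_*$), citing \cite{HL16} only to motivate the statement as the $CD_\Upsilon$-analogue of the Bakry--\'Emery classification; this is precisely the ``self-contained alternative'' you sketch in your final paragraph. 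Your primary route instead uses Proposition \ref{CDPalmeandCD} to pass from $CD_\Upsilon(0,\infty)$ to $CD(0,\infty)$ and imports the classification of \cite{HL16} wholesale, then excludes the $3$-star via Example \ref{ex:3star}. Both are valid: yours is shorter and delegates the graph-theoretic enumeration to \cite{HL16}, while the paper's version is self-contained and delivers the stronger conclusion that outside the path/cycle/$3$-star list not even a negative curvature bound exists. On sufficiency your treatment is in fact \emph{more careful} than the paper's: the parenthetical ``paths and cycles are Ricci-flat'' is literally accurate only for $\Z$ and for cycles, since Definition \ref{defi:Ricciflat} demands $d$-regularity, which fails at the endpoints of finite paths and at $0\in\N_0$; your detour through the birth-death computation of Example \ref{ex:birthdeath} (constant rates $a\equiv b\equiv 1$ away from the boundary, with $b(0)=0$ and, in the finite case, $a(N)=0$, trivially satisfy the monotonicity condition \eqref{monoticityofratesBD}, whence assertion (ii) there gives $CD_\Upsilon(0,\infty)$) legitimately closes this small gap, and since $CD_\Upsilon$ is a pointwise condition, the non-finiteness of the invariant measure on $\N_0$ with constant rates is immaterial.
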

\begin{example}[\textit{Weighted stars}]\label{ex:weightedstars}
This example is concerned with a weighted star. Precisely, let $X=\{x_*,a_1,...,a_m\}$, $m \geq 3$, such that $k(x_*,a_i),k(a_i,x_*)>0$ for any $i\in \{1,...,m\}$ and $k(a_i,a_j)=0$ whenever $i \neq j$. 
We assume that for some $\kappa>0$ 
\begin{equation}\label{curvaturestarcondition}
k(a_i,x_*) - (M_1(x_*) - k(x_*,a_i)) \geq \kappa
\end{equation}
and \begin{equation}\label{curvleafstarcondition}
k(x_*,a_i) \geq \frac{\kappa}{1+\sqrt{3}}
\end{equation}
hold respectively for any $i\in \{1,...,m\}$.

First, we show $\Psi_{2,\Upsilon}(f)(x_*) \geq \kappa \Psi_\Upsilon(f)(x_*)$ for any $f \in \R^X$. We use the notation $z_i= f(a_i)-f(x)$ for any $i \in \{1,...,m\}$. From \eqref{Psi2Formel} we observe
\begin{align*}
2 \Psi_{2,\Upsilon}(f)(x_*)&= \sum_{i=1}^m k(x_*,a_i) k(a_i,x_*) \big( \Upsilon(-z_i) + \Upsilon'(z_i)z_i \big) \\
&\quad + \sum_{i=1}^m k(x_*,a_i) \Upsilon'(z_i)  \sum_{j=1}^m k(x_*,a_j) z_j - M_1(x_*) \sum_{i=1}^m k(x_*,a_i) \Upsilon(z_i).
\end{align*}
Applying the convexity inequality \eqref{mixedterm_birthdeath}, yields
\begin{align*}
\sum_{i=1}^m\sum_{\substack{j=1 \\ j \neq i}}^m k(x_*,a_i)k(x_*,a_j)\Upsilon'(z_i)z_j \geq \sum_{i=1}^m k(x_*,a_i) (M_1(x_*) - k(x_*,a_i)) \Big( \Upsilon(z_i) - \Upsilon(-z_i) - \Upsilon'(z_i)z_i \Big).
\end{align*}
Consequently, we observe
\begin{align*}
2 \Psi_{2,\Upsilon}(f)(x_*)&\geq \sum_{i=1}^m k(x_*,a_i) \Big[ \Upsilon'(z_i)z_i \big( k(a_i,x_*)- (M_1(x_*) - k(x_*,a_i))+ k(x_*,a_i)\big)\\ 
&\quad\qquad\qquad\quad + \Upsilon(-z_i) \big( k(a_i,x_*)- (M_1(x_*) - k(x_*,a_i))\big)- \Upsilon(z_i)k(x_*,a_i)\Big] \\ 
&\geq \kappa \sum_{i=1}^m k(x_*,a_i) \big( \Upsilon'(z_i)z_i + \Upsilon(-z_i)\big)
\end{align*}
by \eqref{curvaturestarcondition} and the convexity of $r \mapsto \Upsilon(r)$. Now, the last term is greater than or equal to $2\kappa \Psi_\Upsilon(f)(x_*)$ by Lemma \ref{App_notlin}(ii).

Next, we aim to show $CD_\Upsilon(\kappa,\infty)$ at $a_j$ for  $j \in \{1,...,m\}$. Let $z_*= f(x_*)-f(a_j)$. After minimization over the second neighborhood (cf. Example \ref{ex:birthdeath}), we observe
\begin{align*}
2\Psi_{2,\Upsilon}(f)(a_j) &\geq \sum_{\substack{i=1 \\ i \neq j}}^m k(a_j,x_*)k(x_*,a_i) \big( \Upsilon(z_*) - \Upsilon'(z_*)z_*\big) \nonumber \\&\quad + k(a_j,x_*)k(x_*,a_j) \big( \Upsilon(-z_*) + \Upsilon'(z_*)z_*\big)\nonumber \\
&\quad + k(a_j,x_*)^2 \big(\Upsilon'(z_*)z_* - \Upsilon(z_*)\big)\nonumber \\
&= k(a_j,x_*) \Big[ \Upsilon(z_*) \big( M_1(x_*) - k(x_*,a_j) - k(a_j,x_*)\big) + \Upsilon(-z_*) k(x_*,a_j)\nonumber \\&\quad+ \Upsilon'(z_*)z_* \big(k(a_j,x_*) - (M_1(x_*) - k(x_*,a_j)) + k(x_*,a_j)\big) \Big]\nonumber\\
&\geq k(a_j,x_*) \big( \Upsilon'(z_*) z_* (k(x_*,a_j)+\kappa) + k(x_*,a_j) \Upsilon(-z_*) - \kappa \Upsilon(z_*)\big)
\end{align*} 
by \eqref{curvaturestarcondition} and the convexity of  $r \mapsto \Upsilon(r)$. Now, the last term is greater than or equal to $2\kappa  \Psi_\Upsilon(f)(a_j)$ if and only if 
\begin{equation}\label{nucdleafstar}
\nu_{1+\frac{\kappa}{k(x_*,a_j)},3 \frac{\kappa}{k(x_*,a_j)}} (z_*) \geq 0.
\end{equation}
It is straightforward to deduce from Lemma \ref{App_notlin}(ii) (by setting $\lambda = 1+ \frac{\kappa}{k(x_*,a_j)}$), that \eqref{nucdleafstar} is satisfied by the condition \eqref{curvleafstarcondition}.
\end{example}

\section{Power type entropies and Beckner inequalities} \label{BecknerSec}
Recall that we assume throughout this section that the invariant and reversible measure $\mu$ is a probability measure on $X$ and $(P_t)_{t \geq 0}$ is a Markov semigroup. Recall also the notation which have been introduced in the beginning of Section \ref{sec:entrodecayandmLSI}.

Beckner inequalities, originally introduced in \cite{Bec}, interpolate between logarithmic Sobolev inequalities and Poincar\'e inequalities. In the diffusive situation of \cite{BGL} they can be derived from the classical Bakry-\'Emery condition with a positive curvature constant, see for instance \cite{AMTU}. As in the case of the Boltzmann entropy, which we have investigated in Section \ref{sec:entrodecayandmLSI}, the Bakry-\'Emery curvature-dimension condition is not suitable in the discrete setting due to the lack of chain rule. Following the approach of \cite{CaDP} and \cite{FaMa}, in \cite{JuWe} discrete Beckner inequalities were derived in the setting of Markov chains with finite state space.

In this section we identify the corresponding operators and curvature-dimension conditions which enable us to argue similarly as in Section \ref{sec:entrodecayandmLSI} to obtain Beckner-type inequalities. 
To that aim, we introduce the entropy functionals
\begin{equation}\label{pEntropy}
\mathcal{H}_p(\rho)= \int_X  \Phi_p(\rho) d\mu  =  \sum\limits_{x \in X} \Phi_p(\rho)(x) \pi(x),\quad \rho \in \mathcal{P}(X),
\end{equation}
where $\Phi_p : [0,\infty) \to \R$ is given by
\begin{align*}
\Phi_p (r) = \frac{r^p - r}{p(p-1)} ,\quad p \in (1,2).
\end{align*}
Those functions are included in the class of so called admissible functions in the context of $\Phi$-entropy inequalities, cf.\ \cite{BoGe}.
\begin{defi}\label{p-operators}
For any $f \in \ell^{\infty,+}(X)$, we define the operators
\begin{equation*}
\begin{split}
\Psi_{\Upsilon}^{(p)}(f) &= -  \overline{\Lambda}_{\Phi_p'}(f),\\
\Psi_{2,\Upsilon}^{(p)}(f) &= \frac{1}{2}\Big( L \Psi_{\Upsilon}^{(p)}(f) - B_{\Upsilon'}(\log f , L \Phi_p ' (f)) \Big),
\end{split}
\end{equation*}
where $\overline{\Lambda}_{\Phi_p'}(f):= \sum\limits_{y \in X}k(x,y) \Lambda_{\Phi_p'}(f(y),f(x))$.
\end{defi}
\begin{remark} Note that the structural similarity of $\Psi_\Upsilon$ and $\Psi_{\Upsilon}^{(p)}$ is hidden in Lemma \ref{PalmeFI} since $\Psi_\Upsilon(\log f) = - \overline{\Lambda}_{\log}(f)$ holds for $f \in \ell^{\infty,+}(X)$. Further,
as
 $\Phi_p \to \Phi$ resp. $\Phi_p'\to \Phi'$ as $p \to 1$ in the pointwise sense,  where  $\Phi(r)=r \log r$, $r>0$, is the function that generates the Boltzmann entropy, the operators $\Psi_\Upsilon$ and $\Psi_{\Upsilon}^{(p)}$ resp. $\Psi_{2,\Upsilon}$ and $\Psi_{2,\Upsilon}^{(p)}$ are related in the following sense. For any  $f \in \ell^{\infty,+}(X)$ we deduce by means of the dominated convergence theorem (recall \eqref{ASSsumfinite} and \eqref{ASSsecsumfinite}) that $L \Phi_p'(f) \to L \log(f)$, $\Psi_{\Upsilon}^{(p)}(f) \to \Psi_\Upsilon (\log f)$ and $\Psi_{2,\Upsilon}^{(p)}(f) \to \Psi_{2,\Upsilon}(\log f) $ as $p \to 1$ in the pointwise sense, respectively.
\end{remark}
The functional 
\begin{equation*}
\mathcal{I}_p(\rho)= \frac{1}{2-p}\int_X \rho\, \Psi_{\Upsilon}^{(p)}(\rho) d\mu, \quad \rho \in \mathcal{P}_*^+(X) 
\end{equation*}
will play the role of the Fisher information as in Section \ref{sec:entrodecayandmLSI}. We have the relation
\begin{equation}\label{pFisherInfoandDirichlet}
\mathcal{E}(\rho, \Phi_p'(\rho))= \mathcal{I}_p(\rho).
\end{equation}
This will be shown in the proof of Theorem \ref{theo:BecknerEntropiederivatives}. As in Section \ref{sec:entrodecayandmLSI}, we will use \eqref{pFisherInfoandDirichlet} to extend $\mathcal{I}_p$ to functions $\rho \in \mathcal{P}_*(X)$,  where $\mathcal{I}_p(\rho)=\infty$ is allowed.
\begin{defi}
We say that $L$ satisfies the Beckner inequality $\mathrm{Bec}(\alpha)$ with $\alpha>0$, if 
\begin{equation}\label{Beckner}
\mathcal{H}_p(f) \leq \frac{1}{2\alpha}\mathcal{I}_p(f)
\end{equation}
holds for any $f \in \mathcal{P}_*(X)$ with $\mathcal{H}_p(f)<\infty$.
\end{defi}
\begin{remark}
Note that \eqref{Beckner} is in fact equivalent to the discrete Beckner inequality considered in \cite{JuWe}. This follows from the relation \eqref{pFisherInfoandDirichlet}.
\end{remark}
\begin{lemma}\label{Becapproximation}
Assume that \eqref{Beckner} holds for $\alpha>0$ and any $f \in P_*^+(X)$. Then $L$ satisfies $\mathrm{Bec}(\alpha)$.
\end{lemma}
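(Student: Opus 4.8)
The statement is the exact analogue of Lemma \ref{MLSIapproximation} for the power-type entropy, and my plan is to mimic that proof verbatim, replacing the pair $(\mathcal{H},\mathcal{I})$ by $(\mathcal{H}_p,\mathcal{I}_p)$. The goal is to upgrade the Beckner inequality \eqref{Beckner} from the dense subclass $\mathcal{P}_*^+(X)$ of bounded, strictly positive densities to the full class $\mathcal{P}_*(X)$. As in Lemma \ref{MLSIapproximation}, the case $\mathcal{I}_p(f)=\infty$ is vacuous, so I would fix $f\in\mathcal{P}_*(X)$ with $\mathcal{I}_p(f)<\infty$ and produce an approximating sequence $(f_n)_{n\in\N}\subset\mathcal{P}_*^+(X)$ with $\mathcal{H}_p(f_n)\to\mathcal{H}_p(f)$ and $\mathcal{I}_p(f_n)\to\mathcal{I}_p(f)$. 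Applying \eqref{Beckner} to each $f_n$ and passing to the limit then yields \eqref{Beckner} for $f$.

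For the approximating sequence I would use the same truncation as in Lemma \ref{MLSIapproximation}: set $\tilde f_n(x)=f(x)$ where $\tfrac1n\le f(x)\le n$, $\tilde f_n(x)=\tfrac1n$ where $f(x)<\tfrac1n$, and $\tilde f_n(x)=n$ where $f(x)>n$, then normalize $f_n=\tilde f_n/C_n$ with $C_n=\int_X\tilde f_n\,d\mu$. Since $f$ is strictly positive and $\mu$ is a probability measure, $C_n\to 1$ by monotone/dominated convergence, and each $f_n$ lies in $\mathcal{P}_*^+(X)$ by construction. For the convergence of the entropy I would use that $\Phi_p$ is continuous on $[0,\infty)$ and controlled by $\Phi_p(r)\lesssim 1+r$ (since $p<2$), so that $\Phi_p(f_n)\to\Phi_p(f)$ pointwise with an $\ell^1(\mu)$ dominating bound, giving $\mathcal{H}_p(f_n)\to\mathcal{H}_p(f)$ via dominated convergence; the assumption $\mathcal{H}_p(f)<\infty$ (implicit in the nontrivial case) secures integrability.

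The only real obstacle is the convergence $\mathcal{I}_p(f_n)\to\mathcal{I}_p(f)$, since $\mathcal{I}_p$ is a Dirichlet-form-type quantity via \eqref{pFisherInfoandDirichlet}, namely $\mathcal{I}_p(\rho)=\mathcal{E}(\rho,\Phi_p'(\rho))$ with $\mathcal{E}$ the Dirichlet form \eqref{Dirichletform}. Writing this out,
\begin{equation*}
\mathcal{I}_p(\rho)=\frac12\sum_{x,y\in X}k(x,y)\big(\rho(y)-\rho(x)\big)\big(\Phi_p'(\rho(y))-\Phi_p'(\rho(x))\big)\pi(x),
\end{equation*}
every summand is nonnegative because $\Phi_p'$ is increasing, so I would argue by a Fatou/dominated-convergence combination on this double series. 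The truncation $f\mapsto\tilde f_n$ is $1$-Lipschitz and order-preserving, and $\Phi_p'$ is monotone, so each difference $(\rho(y)-\rho(x))(\Phi_p'(\rho(y))-\Phi_p'(\rho(x)))$ for $\rho=f_n$ is dominated termwise by the corresponding expression for $f$ up to the normalizing factor $C_n\to1$; this gives a uniform integrable majorant, and pointwise convergence of each term then yields $\mathcal{I}_p(f_n)\to\mathcal{I}_p(f)$ by dominated convergence, using $\mathcal{I}_p(f)<\infty$. The phrase ``it is straightforward to check'' in Lemma \ref{MLSIapproximation} signals that the authors expect this verification to be routine, and I would compress it accordingly, emphasizing only the monotonicity of $\Phi_p'$ and the termwise domination as the points that make dominated convergence applicable.
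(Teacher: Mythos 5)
Your proposal takes exactly the paper's route: the paper's proof of Lemma \ref{Becapproximation} is a one-line reference to the truncation argument of Lemma \ref{MLSIapproximation}, which is precisely what you carry out, including the vacuous case $\mathcal{I}_p(f)=\infty$, the normalized truncation sequence $f_n=\tilde f_n/C_n$, and the convergence of $\mathcal{H}_p$ and $\mathcal{I}_p$ along it. One small correction: since $p>1$ the bound $\Phi_p(r)\lesssim 1+r$ is false ($r^p$ is superlinear), and the correct majorant is $C(1+r^p)$, which is integrable precisely because $\mathcal{H}_p(f)<\infty$ together with $\int_X f\,d\mu=1$ gives $f^p\in\ell^1(\mu)$ --- exactly the hypothesis you already invoke, so your argument goes through unchanged.
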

\begin{proof}
This follows by the analogous truncation argument as it has been described in the proof of Lemma \ref{MLSIapproximation}.
\end{proof}
%
%
%
\begin{defi}\label{def:p-cdcondition}
The Markov generator $L$ is said to satisfy $CD_\Upsilon^{(p)}(\kappa,\infty)$ at $x \in X$ for $\kappa \in \R$, if
\begin{equation}\label{CDpequation}
\Psi_{2,\Upsilon}^{(p)}(f)(x) \geq \frac{\kappa}{2-p} \Psi_{\Upsilon}^{(p)}(f)(x)
\end{equation}
holds for any $f \in \ell^{\infty,+}(X)$. If $L$ satisfies $CD_\Upsilon^{(p)}(\kappa,\infty)$ at any $x \in X$, then we say that $L$ satisfies $CD_\Upsilon^{(p)}(\kappa,\infty)$.
\end{defi}
\begin{remark}
(i) The assumptions \eqref{ASSsumfinite} and \eqref{ASSsecsumfinite} guarantee that $\Psi_{\Upsilon}^{(p)}$ and $\Psi_{2,\Upsilon}^{(p)}$ are both well defined. We remark, as in Section \ref{sec:basicsandCD}, that one may choose different function spaces that possibly depend on the kernel. In particular, see Remark \ref{CDponsmallerclass}.

(ii) In case that the underlying graph to $L$ is locally finite, \eqref{CDpequation} holds true on all positive functions $f \in \R^X$ if and only if it is satisfied on $\ell^{\infty,+}(X)$. This follows by an analogous reason as explained in Remark \ref{CDPalmefunctionsspace}(ii).
\end{remark}
Analogously to Theorem \ref{ThmSecTDEnt}, the curvature-dimension condition of Definition \ref{def:p-cdcondition} yields a differential inequality regarding the heat flow along the entropy functional defined in \eqref{pEntropy}.
\begin{theorem}\label{theo:BecknerEntropiederivatives}
Let $M_1 \in \ell^2(\mu)$ and $M_2 \in l^1(\mu)$. Then for any $f \in \mathcal{P}_*^+(X)$, we have
\begin{equation*}
\frac{d}{d t} \mathcal{H}_p(P_t f) = - \mathcal{I}_p(P_t f)
\end{equation*}
and 
\begin{equation*}
\frac{d^2}{d t^2} \mathcal{H}_p(P_t f) = 2 \int_X P_t f \Psi_{2,\Upsilon}^{(p)}(P_t f) d\mu.
\end{equation*}
In particular, if the Markov generator
$L$ satisfies $CD_\Upsilon^{(p)}(\kappa,\infty)$, then the differential inequality
\begin{equation*}
\frac{d^2}{d t^2}\mathcal{H}_p (P_t f) \geq - 2 \kappa  \frac{d}{d t}\mathcal{H}_p (P_t f)
\end{equation*}
holds for any $t>0$ and $f \in \mathcal{P}_*^+(X)$.
\end{theorem}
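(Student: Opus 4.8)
The plan is to mirror, step by step, the proofs of Proposition \ref{deBruijnidentity} and Theorem \ref{ThmSecTDEnt}, systematically replacing the logarithm by $\Phi_p'$ and using the integrability hypotheses $M_1\in\ell^2(\mu)$, $M_2\in\ell^1(\mu)$ to justify the interchange of differentiation, summation and the action of $L$ exactly as in those results. Writing $u=P_tf$ (which stays in $\mathcal{P}_*^+(X)$ along the flow, bounded away from $0$ and $\infty$), I would first differentiate $\mathcal{H}_p(u)=\int_X\Phi_p(u)\,d\mu$ to obtain $\frac{d}{dt}\mathcal{H}_p(u)=\int_X\Phi_p'(u)\,Lu\,d\mu$ and then use the standard identity $\int_X g\,Lf\,d\mu=-\mathcal{E}(f,g)$ for the Dirichlet form \eqref{Dirichletform} to get $\frac{d}{dt}\mathcal{H}_p(u)=-\mathcal{E}(u,\Phi_p'(u))$. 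At this point I must establish the relation \eqref{pFisherInfoandDirichlet}, i.e. $\mathcal{E}(\rho,\Phi_p'(\rho))=\mathcal{I}_p(\rho)$; this is the power analogue of the computation \eqref{FisherUsual} and follows by symmetrizing the sum defining $\mathcal{I}_p(\rho)=-\frac{1}{2-p}\int_X\rho\,\overline{\Lambda}_{\Phi_p'}(\rho)\,d\mu$ via detailed balance and inserting $\Phi_p''(r)=r^{p-2}$ together with $\Phi_p'(a)-\Phi_p'(b)=\frac{a^{p-1}-b^{p-1}}{p-1}$, which produces precisely the factor $\frac{2-p}{p-1}$ that cancels the prefactor. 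This yields the first identity $\frac{d}{dt}\mathcal{H}_p(u)=-\mathcal{I}_p(u)$.

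For the second derivative I would differentiate the clean form $\frac{d}{dt}\mathcal{H}_p(u)=\int_X\Phi_p'(u)\,Lu\,d\mu$ once more. Using $\dot u=Lu$ and commuting $\frac{d}{dt}$ with $L$ (so that $\frac{d}{dt}Lu=L^2u$) gives $\frac{d^2}{dt^2}\mathcal{H}_p(u)=\int_X\Phi_p''(u)(Lu)^2\,d\mu+\int_X\Phi_p'(u)\,L^2u\,d\mu$, and self-adjointness of $L$ rewrites the last integral as $\int_X Lu\,L\Phi_p'(u)\,d\mu$. It then remains to identify this with $2\int_X u\,\Psi_{2,\Upsilon}^{(p)}(u)\,d\mu$, and the two ingredients are the power analogues of Lemma \ref{PalmeFI} and Lemma \ref{BPalmeID}. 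Applying the first fundamental identity (Lemma \ref{lem:firstFI}) with $H=\Phi_p'$ gives $L\Phi_p'(u)=u^{p-2}Lu-\Psi_{\Upsilon}^{(p)}(u)$, i.e. $\Psi_{\Upsilon}^{(p)}(u)=\Phi_p''(u)Lu-L\Phi_p'(u)$, which is the exact analogue of \eqref{keyPalme}.

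Unwinding Definition \ref{p-operators} and using self-adjointness, $2\int_X u\,\Psi_{2,\Upsilon}^{(p)}(u)\,d\mu=\int_X \Psi_{\Upsilon}^{(p)}(u)\,Lu\,d\mu-\int_X u\,B_{\Upsilon'}(\log u,L\Phi_p'(u))\,d\mu$. For the second integral I would apply Lemma \ref{BPalmeID} with $g=\log u$ (so $e^{g}=u$) and $h=L\Phi_p'(u)$, which yields $\int_X u\,B_{\Upsilon'}(\log u,L\Phi_p'(u))\,d\mu=-2\int_X Lu\,L\Phi_p'(u)\,d\mu$; substituting the analogue of Lemma \ref{PalmeFI} into the first integral then collapses everything to $\int_X\Phi_p''(u)(Lu)^2\,d\mu+\int_X Lu\,L\Phi_p'(u)\,d\mu$, matching the direct computation and proving the second identity. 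Finally, inserting the pointwise bound $\Psi_{2,\Upsilon}^{(p)}(u)\ge\frac{\kappa}{2-p}\Psi_{\Upsilon}^{(p)}(u)$ coming from $CD_\Upsilon^{(p)}(\kappa,\infty)$ (Definition \ref{def:p-cdcondition}) into the representation of $\frac{d^2}{dt^2}\mathcal{H}_p(u)$, using $u>0$ and the definition $\mathcal{I}_p(u)=\frac{1}{2-p}\int_X u\,\Psi_{\Upsilon}^{(p)}(u)\,d\mu$, gives $\frac{d^2}{dt^2}\mathcal{H}_p(u)\ge 2\kappa\,\mathcal{I}_p(u)=-2\kappa\frac{d}{dt}\mathcal{H}_p(u)$, the asserted differential inequality.

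Beyond this algebra, I expect the main obstacle to be the analytic justification in the infinite state space case: differentiating under the integral sign, interchanging $\frac{d}{dt}$ with the (iterated) sums hidden in $\Psi_{\Upsilon}^{(p)}$ and $B_{\Upsilon'}$, and commuting $L$ with the semigroup. These are exactly the points for which $M_1\in\ell^2(\mu)$ and $M_2\in\ell^1(\mu)$ are imposed, so that dominated convergence applies verbatim as in the proof of Theorem \ref{ThmSecTDEnt}; one must verify that $u=P_tf$ and the associated quantities $\Phi_p'(u)$ and $L\Phi_p'(u)$ remain in the relevant summability classes uniformly on compact $t$-intervals, which is ensured by $f\in\mathcal{P}_*^+(X)$ and the fact that $u$ stays bounded away from $0$ and $\infty$.
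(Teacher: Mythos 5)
Your proposal is correct, but it organizes the proof differently from the paper at both steps. For the first identity, the paper does not symmetrize: it writes $\tfrac{d}{dt}\mathcal{H}_p(P_tf)=\int_X \Phi_p'(P_tf)\,LP_tf\,d\mu=\int_X P_tf\,L\Phi_p'(P_tf)\,d\mu$, expands $L\Phi_p'(P_tf)$ by Lemma \ref{lem:firstFI} (identity \eqref{FIforphip}), and exploits invariance together with $r\Phi_p''(r)=r^{p-1}=(p-1)\Phi_p'(r)+\tfrac1p$ to solve the resulting linear relation for $\int_X \Phi_p'(P_tf)\,LP_tf\,d\mu$, from which \eqref{pFisherInfoandDirichlet} drops out; your detailed-balance symmetrization of $\mathcal{I}_p$ proves the same identity, and it does go through --- the symmetrized bracket produces the factor $2-p$ via $\rho(y)\Phi_p''(\rho(y))-\rho(x)\Phi_p''(\rho(x))=(p-1)\bigl(\Phi_p'(\rho(y))-\Phi_p'(\rho(x))\bigr)$, your ``$\frac{2-p}{p-1}$'' being merely a different bookkeeping of the same cancellation. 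The more substantial divergence is in the second identity: the paper differentiates the Fisher-information representation, which forces a pointwise computation of $\tfrac{d}{dt}\overline{\Lambda}_{\Phi_p'}(P_tf)$ involving $\Phi_p'''$, a rearrangement via \eqref{FIforphip} into \eqref{secondpEntrovorDB}, and then a hand-made detailed-balance computation establishing $2\int_X P_tf\,L(L\Phi_p'(P_tf))\,d\mu=-\int_X P_tf\,B_{\Upsilon'}(\log P_tf,L\Phi_p'(P_tf))\,d\mu$ --- in effect re-proving Lemma \ref{BPalmeID} in this setting. You instead differentiate the cleaner form $\int_X\Phi_p'(u)\,Lu\,d\mu$ to get $\int_X\Phi_p''(u)(Lu)^2\,d\mu+\int_X Lu\,L\Phi_p'(u)\,d\mu$ directly, and then verify by static algebra (Lemma \ref{lem:firstFI} with $H=\Phi_p'$, giving $\Psi_\Upsilon^{(p)}(u)=\Phi_p''(u)Lu-L\Phi_p'(u)$, plus Lemma \ref{BPalmeID} with $g=\log u$, $h=L\Phi_p'(u)$, legitimate since $e^g=u$) that $2\int_X u\,\Psi_{2,\Upsilon}^{(p)}(u)\,d\mu$ equals the same expression. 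Your route avoids $\Phi_p'''$ entirely, invokes Lemma \ref{BPalmeID} as a black box where the paper redoes its proof, and decouples the theorem into a time-differentiation plus a time-independent integral identity, which parallels Theorem \ref{ThmSecTDEnt} more transparently; the paper's version buys a single forward computation along the flow at the cost of heavier pointwise calculus. Your analytic caveats (dominated convergence justified by $M_1\in\ell^2(\mu)$, $M_2\in\ell^1(\mu)$, and $P_tf$ staying in $\mathcal{P}_*^+(X)$ with uniform bounds, by positivity and $P_t\mathds{1}=\mathds{1}$) are exactly the points the paper flags, and your final deduction of the differential inequality from $CD_\Upsilon^{(p)}(\kappa,\infty)$ coincides with the paper's.
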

\begin{proof}
As $L$ is symmetric with respect to the $L^2(\mu)$-inner product,
\begin{equation*}
\frac{d}{d t}\mathcal{H}_p(P_t f)= \int_X \Phi_p'(P_t f) L P_t f d\mu = \int_X P_t f L \Phi_p'(P_t f) d\mu
\end{equation*}
holds true. 
By Lemma \ref{lem:firstFI}, we have
\begin{equation}\label{FIforphip}
L \Phi_p'(P_t f) = \Phi_p''(P_t f) L P_t f + \overline{\Lambda}_{\Phi_p'}(P_t f).
\end{equation}
With regard to the identity \eqref{pFisherInfoandDirichlet}, note that the last term equals $-\mathcal{E}(P_t f, \Phi_p'(P_tf))$.
Besides that, we observe due to $\mu$ being invariant
\begin{align*}
\int_X P_t f \Phi_p''(P_t f) L P_t f d\mu  = (p-1)\int_X \Phi_p'(P_t f) L P_t f d\mu
\end{align*}
and consequently
\begin{align*}
(1-(p-1)) \int_X \Phi_p'(P_t f) L P_t f d\mu = \int_X P_t f \overline{\Lambda}_{\Phi_p'}(P_t f) d\mu,
\end{align*}
which yields
\begin{equation*}
\frac{d}{d t}\mathcal{H}_p(P_t f) = \frac{1}{2-p} \int_X P_t f \overline{\Lambda}_{\Phi_p'}(P_t f) d\mu = -\frac{1}{2-p}\int_X P_t f \; \Psi_{\Upsilon}^{(p)}(P_t f) d\mu.
\end{equation*}
Regarding the second derivative, the following representation formula will be crucial
\begin{align*}
\frac{d}{d t} \overline{\Lambda}_{\Phi_p'}(P_t f) &= \sum\limits_{y \in X} k(x,y) \frac{d}{d t} \Big( \Phi_p'(P_t f)(y) - \Phi_p'(P_t f)(x) - \Phi_p''(P_t f) (x) (P_t f(y) - P_t f(x))\Big) \\
&= \sum\limits_{y \in X} k(x,y) \big( \Phi_p''(P_t f)(y)L P_t f (y) - \Phi_p''(P_t f)(x)L P_t f(x)\big)  \\
&\qquad - \Phi_p'''(P_t f)(x) (L P_t f)(x) \sum\limits_{y \in X} k(x,y) (P_t f(y) - P_t f(x))  \\
&\qquad  - \Phi_p''(P_t f)(x) \sum\limits_X k(x,y) (L P_t f(y) - L P_t f(x)) \\
&= L \big(\Phi_p''(P_t f) L P_t f \big)(x) - \Phi_p'''(P_t f)(x) (L P_t f(x))^2 - \Phi_p''(P_t f)(x) L(L P_t f)(x).
\end{align*}
Then we have
\begin{align*}
&(2-p) \frac{d^2}{d t^2}\mathcal{H}_p(P_t f) = \int_X L P_t f \; \overline{\Lambda}_{\Phi_p'}(P_t f) d\mu + \int_X P_t f \frac{d}{d t} \overline{\Lambda}_{\Phi_p'}(P_t f) d\mu \\
&\quad = \int_X P_t f L (\overline{\Lambda}_{\Phi_p'}(P_t f)) d\mu + \int_X \Phi_p''(P_t f) (L P_t f)^2 d\mu -  \int_X P_t f\; \Phi_p'''(P_t f) (L P_t f)^2 d\mu \\
&\qquad\quad -  \int_X P_t f \; \Phi_p''(P_t f) L (L P_t f) d\mu \\
&\quad = \int_X P_t f L (\overline{\Lambda}_{\Phi_p'}(P_t f)) d\mu + (3-p) \int_X \Phi_p''(P_t f) (L P_t f)^2 d\mu  -  (p-1) \int_X \Phi_p'(P_t f) L (L P_t f) d\mu.
\end{align*}
Invoking \eqref{FIforphip}, we obtain
\begin{align*}
\int_X \Phi_p''(P_t f) (L P_t f)^2 d\mu &= \int_X L \Phi_p'(P_t f) L P_t f d\mu  - \int_X L P_t f \;  \overline{\Lambda}_{\Phi_p'}(P_t f) d\mu \\
&= \int_X P_t f \; L \big(L \Phi_p'(P_t f)\big) d\mu - \int_X P_t f \;L \overline{\Lambda}_{\Phi_p'}(P_t f) d\mu
\end{align*}
and hence
\begin{equation}\label{secondpEntrovorDB}
\frac{d^2}{d t^2}\mathcal{H}_p(P_t f) = \frac{1}{2-p} \Big( (p-2)\int_X P_t f \; L (\overline{\Lambda}_{\Phi_p'}(P_t f)) d\mu  + 2( 2-p) \int_X P_t f \; L(L \Phi_p'(P_t f)) d\mu \Big).
\end{equation}
Further, the detailed balance property yields
\begin{align*}
&2\int_X P_t f \; L(L \Phi_p'(P_t f)) d\mu =2 \sum\limits_{x \in X} P_t f(x) \sum\limits_{y \in X} k(x,y) \big(L \Phi_p'(P_t f)(y) - L \Phi_p'(P_t f) (x)\big) \pi(x) \\
&\qquad\qquad= - \sum\limits_{y \in X} \sum\limits_{x \in X} k(y,x) P_t f(x) \big(L \Phi_p'(P_t f)(x) - L \Phi_p'(P_t f)(y)\big) \pi(y) \\
&\quad\qquad\qquad + \sum\limits_{x \in X} P_t f(x) \sum\limits_{y \in X} k(x,y) \big(L \Phi_p'(P_t f)(y) - L \Phi_p'(P_t f) (x)\big) \pi(x)\\
&\qquad\qquad= - \sum\limits_{y \in X}P_t f(y) \sum\limits_{x \in X} k(y,x) e^{\log P_t f(x)-\log P_t f(y))} 
 \big(L \Phi_p'(P_t f)(x) - L \Phi_p'(P_t f)(y)\big) \pi(y) \\
&\qquad\qquad\quad + \sum\limits_{x \in X} P_t f(x) \sum\limits_{y \in X} k(x,y) \big(L \Phi_p'(P_t f)(y) - L \Phi_p'(P_t f) (x)\big) \pi(x)\\
&\qquad\qquad= - \sum\limits_{y \in X} P_t f(y) \sum\limits_{x \in X} k(y,x) (e^{\log P_t f(x) - \log P_t f(y))} - 1 ) \big(L \Phi_p'(P_t f)(x) - L \Phi_p' (y) \big)\pi(y) \\
&\qquad\qquad= - \int_X P_t f B_{\Upsilon'}\big(\log P_t f, L \Phi_p'(P_t f) \big)d\mu.
\end{align*}
Inserting this into \eqref{secondpEntrovorDB}, we end up with
\begin{align*}
\frac{d^2}{d t^2}\mathcal{H}_p(P_t f) =  \int_ X P_t f \big( L \Psi_{\Upsilon}^{(p)}(P_t f) - B_{\Upsilon'}\big(\log P_t f, L \Phi_p'(P_t f) \big)d\mu = 2 \int_X P_t f \; \Psi_{2,\Upsilon}^{(p)}(P_t f)) d\mu.
\end{align*}
The additional claim now follows by applying $CD_\Upsilon^{(p)}(\kappa,\infty)$.
\end{proof}
Consequently, using the entropy method, as explained in Section \ref{sec:entrodecayandmLSI}, and Lemma \ref{Becapproximation},  we deduce the following result.
\begin{corollary}\label{BecwithCDp}
If the Markov generator $L$ satisfies $CD_\Upsilon^{(p)}(\kappa,\infty)$, $\kappa > 0$, $M_1 \in \ell^2(\mu)$ and $M_2 \in l^1(\mu)$, then $L$ satisfies $\mathrm{Bec}(\kappa)$.
\end{corollary}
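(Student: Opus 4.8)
The plan is to run the Bakry-\'Emery entropy method exactly as outlined in Section \ref{sec:entrodecayandmLSI}, now with $\mathcal{H}_p$ in place of the Boltzmann entropy $\mathcal{H}$ and $\mathcal{I}_p$ in place of $\mathcal{I}$. First I would fix $f \in \mathcal{P}_*^+(X)$, since both Theorem \ref{theo:BecknerEntropiederivatives} and the condition $CD_\Upsilon^{(p)}(\kappa,\infty)$ are available on this class, and the integrability hypotheses $M_1 \in \ell^2(\mu)$, $M_2 \in \ell^1(\mu)$ are precisely what make the differentiations under the integral sign in that theorem legitimate. Invoking Theorem \ref{theo:BecknerEntropiederivatives}, the condition $CD_\Upsilon^{(p)}(\kappa,\infty)$ yields the differential inequality $\frac{d^2}{dt^2}\mathcal{H}_p(P_t f) \geq -2\kappa \frac{d}{dt}\mathcal{H}_p(P_t f)$ for all $t>0$, which is the entry point for the method.

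Next I would carry out the Gronwall step. Setting $g(t) := \frac{d}{dt}\mathcal{H}_p(P_t f) = -\mathcal{I}_p(P_t f) \leq 0$, the above inequality reads $g'(t) + 2\kappa g(t) \geq 0$, so $t \mapsto e^{2\kappa t} g(t)$ is nondecreasing and hence $-g(t) \leq e^{-2\kappa t}\bigl(-g(0)\bigr) = e^{-2\kappa t}\,\mathcal{I}_p(f)$ for all $t \geq 0$. Integrating $-g$ over $[0,T]$ then gives
\begin{equation*}
\mathcal{H}_p(f) - \mathcal{H}_p(P_T f) = \int_0^T \bigl(-g(t)\bigr)\,dt \leq \frac{1 - e^{-2\kappa T}}{2\kappa}\,\mathcal{I}_p(f).
\end{equation*}
Letting $T \to \infty$ and using $\mathcal{H}_p(P_T f) \to 0$ would produce the Beckner inequality $\mathcal{H}_p(f) \leq \frac{1}{2\kappa}\mathcal{I}_p(f)$ for every $f \in \mathcal{P}_*^+(X)$.

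It then remains to verify the convergence to equilibrium $\mathcal{H}_p(P_T f) \to 0$, which I would handle exactly as for the Boltzmann entropy: by ergodicity and the fact that $(P_t)_{t\geq 0}$ is a Markov semigroup, $P_T f \to 1$ pointwise, while $f \in \ell^{\infty,+}(X)$ forces $c \leq P_T f \leq C$ uniformly for positive constants $c,C$ (since $P_T$ preserves constants and is order preserving). The dominated convergence theorem, together with continuity of $\Phi_p$ on $[c,C]$ and the vanishing $\Phi_p(1)=0$, then gives $\mathcal{H}_p(P_T f) = \int_X \Phi_p(P_T f)\,d\mu \to \int_X \Phi_p(1)\,d\mu = 0$. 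A final appeal to Lemma \ref{Becapproximation} upgrades the inequality from $\mathcal{P}_*^+(X)$ to all of $\mathcal{P}_*(X)$, establishing $\mathrm{Bec}(\kappa)$.

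The hard part is not the Gronwall manipulation, which is purely mechanical once the second-order differential inequality is in hand, but rather confirming that the entropy method transfers \emph{verbatim} to the power-type entropy: concretely, that the two derivative identities of Theorem \ref{theo:BecknerEntropiederivatives} genuinely hold on $\mathcal{P}_*^+(X)$ under the stated integrability of $M_1$ and $M_2$, and that $\mathcal{H}_p(P_T f)$ really tends to $0$. Since Theorem \ref{theo:BecknerEntropiederivatives} already packages both derivative formulas and the resulting differential inequality, the substantive remaining content is the convergence-to-equilibrium argument and the approximation via Lemma \ref{Becapproximation}; everything else is a line-by-line translation of the Boltzmann case.
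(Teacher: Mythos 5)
Your proposal is correct and follows essentially the same route as the paper: the paper's proof of Corollary \ref{BecwithCDp} consists precisely of invoking Theorem \ref{theo:BecknerEntropiederivatives}, running the Gronwall/integration step of the entropy method outlined in Section \ref{sec:entrodecayandmLSI} (including the convergence $\mathcal{H}_p(P_t f)\to 0$ via ergodicity, the uniform bounds $c\le P_t f\le C$ from the Markov property, and dominated convergence), and then upgrading from $\mathcal{P}_*^+(X)$ to $\mathcal{P}_*(X)$ by Lemma \ref{Becapproximation}. Your write-up merely makes explicit the steps the paper compresses into a one-line citation.
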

\begin{remark}\label{CDponsmallerclass}
Note that we only applied the $CD_\Upsilon^{(p)}$ condition on functions belonging to $\mathcal{P}_*^+(X)$. Thus, in order to get the statements of Theorem \ref{theo:BecknerEntropiederivatives} and Corollary \ref{BecwithCDp} respectively, we could formulate Definition \ref{def:p-cdcondition} on the smaller class of $\mathcal{P}_*^+(X)$.
\end{remark}
\section{Tensorization of the discrete and diffusion setting} \label{tensorhybrid}
Here, we revisit the tensorization procedure from Section \ref{tensandgrad} in a different spirit. More generally speaking, given two independent Markov processes $(Z_t)_{t \geq 0}$ on a state space $E$ and $(Y_t)_{t \geq 0}$ on a state space $F$, respectively, each of which with a given unique invariant measure, the product $(Z_t,Y_t)_{t \geq 0}$ defines a Markov process on the state space $E \times F$. Further, the generator of this process is given by $L_Z \oplus L_Y$, where $L_Z $ denotes the generator of $(Z_t)_{t \geq 0}$ and $L_Y$ denotes the generator of $(Y_t)_{t \geq 0}$. The invariant measure is given by the product measure of the respective invariant measures and reversibility is valid provided that it is valid for both Markov processes respectively.

The interest in this section lies in the product of two Markov processes where one fits into the classical diffusive setting of \cite{BGL} and the other one to the discrete Markov chain setting, which has been studied so far in this paper.
In this case we will also speak of {\em hybrid processes}.

As a motivating example let us consider a linear reaction-drift-diffusion system of the form
\begin{equation} \label{RDsys}
\partial_t \rho_i-\Delta \rho_i-\mbox{div}\,(\rho_i \nabla V)=\sum_{j=1}^n \alpha_{i,j} \rho_j\quad \mbox{in}\;
(0,\infty)\times \iR^d,\;i=1,2,\ldots,n,
\end{equation}
for the chemical concentrations $\rho_i(t,x)$. Here $V\in C^2(\iR^d)$ denotes the confining potential. We assume that 
$e^{-V}\in L^1(\iR^d)$. The reaction rates are such that the total mass 
$\int_{\iR^d}\sum_{i=1}^n\rho_i(t,x)\,dx$ 
is conserved, that is we ask for $\sum_{i=1} \alpha_{i,j}=0$ for all $j\in \{1,\ldots,n\}$. Furthermore, we assume that
  $\alpha_{i,i}<0$ for all $i$, and $\alpha_{i,j}\ge 0$ whenever $i\neq j$. 
  
A key idea, which has also been used in \cite{Frei} and \cite{Soner}, is now to introduce a new variable for the coordinates in \eqref{RDsys}, i.e.\ to interpret solutions to \eqref{RDsys} as functions on the product space $[0,\infty)\times \iR^d \times X$,
where $X=\{1,2,\ldots,n\}$. Then we can reformulate \eqref{RDsys} into
\begin{equation} \label{FPsum}
\partial_t \varrho (t,x,i) = (L^*_c + L^*_d ) \varrho(t,x,i),
\end{equation}
where $\varrho: [0,\infty) \times \iR^d \times X \to \R$ is given by $ \varrho (t,x,i)=\rho_i(t,x)$. Here the operator $L^*_c$
acts w.r.t.\ the space variable $x\in \iR^d$ and takes the form
$L^*_c v=\Delta v+\mbox{div}\,(v \nabla V)$, whereas $L^*_d$ acts w.r.t.\ the component variable $i\in X$ and reads as
$(L^*_d v)(i)=\sum_{j=1}^n \alpha_{i,j} v(j)$. Equation \eqref{FPsum} can be viewed as the Fokker-Planck equation
associated with a stochastic process which is the tensor product of a generalized Ornstein-Uhlenbeck process with
generator $L_c v=\Delta v-\nabla V\cdot \nabla v$ and a finite Markov chain with generator
\begin{equation*}
L_d v (i) =\sum\limits_{j \in X} k(i,j) v(j)=
 \sum\limits_{j \in X} k(i,j) \big( v(j) - v(i) \big), \quad i \in X,
\end{equation*}
with transition rates $k(i,j)=\alpha_{j,i}$. Note that $L^*_c$ is the adjoint of $L_c$ w.r.t.\ the Lebesgue measure in $\iR^d$
(see e.g.\ \cite{Jn})
and $L^*_d$ is the adjoint of $L_d$ w.r.t.\ the counting measure on $X$. 

If $V$ is uniformly convex with Hessian $\nabla ^2 V\ge \lambda>0$ (in the sense of positive definite matrices) then
the generalized Ornstein-Uhlenbeck process generated by $L_c$ satisfies $CD(\lambda,\infty)$, where
the unique invariant measure has the density $Ce^{-V(x)}$ w.r.t.\ the Lebesgue measure with a normalizing constant $C>0$.
Let us suppose that the Markov chain satisfies the assumptions of Section \ref{sec:basicsandCD} and has positive curvature
in the $CD_\Upsilon$ sense. Is there a notion of curvature (bounds) for such hybrid processes which imply modified
logarithmic Sobolev inequalities and exponential decay of the entropy and enjoys a natural tensorization principle similar
to Section \ref{tensandgrad}. In particular, can we expect decay of the (relative) entropy for the  reaction-drift-diffusion system
\eqref{RDsys} under the described assumptions? We will answer both questions in the affirmative.

In what follows, let $L_c$ be the generator of a Markov process on the state space $E$ with invariant and reversible probability measure $\mu_c$. We assume that $L_c$ satisfies the diffusion property, cf.\ \cite{BGL}. Let further $\Gamma$ and $\Gamma_2$ denote the corresponding carr\'e du champ and iterated 
carr\'e du champ operator, respectively. Besides that, let $L_d$ denote  the generator of a positive recurrent Markov chain on the  state space $X$ as in \eqref{def:generator}, with invariant and reversible probability measure $\mu_d$.

In the following lines we will remain on a formal level with respect to the class of (admissible) functions. Note however, that in case of finite $X$ we could choose $u:E\times X \to \R$ such that $u(\cdot,i)$ is in the function space $\mathcal{A}$ resp. $\mathcal{A}_0^{const+}$, which are defined in \cite{BGL}, for any $i \in X$.

We now introduce operators which are natural substitutes for the carr\'e du champ and iterated 
carr\'e du champ operator in the hybrid case. Here we use the notation $A \oplus B$ analogously to Section \ref{tensandgrad},
where now $A$ acts on the continuous variable and $B$ on the discrete one. It turns out that the natural replacement of the carr\'e du champ operator in the hybrid case is just given by $\Gamma \oplus \Psi_\Upsilon$. The analogue of
the iterated 
carr\'e du champ operator is introduced in the following definition.
\begin{defi}
For sufficiently regular functions $u: E \times X \to \R $ we define
\begin{equation*}
\big(\Gamma \oplus \Psi_\Upsilon\big)_2 (u)= \frac{1}{2}\Big((L_c \oplus L_d) (\Gamma \oplus \Psi_\Upsilon)(u) - 2 \Gamma \big(u, (L_c \oplus L_d) (u) \big) - B_{\Upsilon'}\big(u, (L_c \oplus L_d) (u) \big)\Big).
\end{equation*}
\end{defi}
The next Lemma plays the role of Lemma \ref{lem:iteratedcdcestimate} in Section \ref{tensandgrad}.
\begin{lemma}\label{contdiscestimate}
We have
\begin{equation*}
\big(\Gamma \oplus \Psi_\Upsilon \big)_2 (u) \geq \big(\Gamma_2 \oplus \Psi_{2,\Upsilon}\big) (u)
\end{equation*}
for sufficiently regular functions $u: E \times X \to \R$.
\end{lemma}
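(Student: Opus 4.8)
The plan is to expand $2(\Gamma \oplus \Psi_\Upsilon)_2(u)$ directly from the definition, splitting $L_c\oplus L_d$ and $(L_c\oplus L_d)u = L_c u + L_d u$ into their continuous and discrete pieces, and then to sort the resulting terms into three groups: those involving only the continuous variable, those involving only the discrete variable, and genuinely mixed terms. Writing $\Gamma(u)(x,i)$ for the carr\'e du champ of $u(\cdot,i)$ evaluated at $x$ and $\Psi_\Upsilon(u)(x,i)$ for $\Psi_\Upsilon(u(x,\cdot))(i)$, the purely continuous terms assemble into $L_c\Gamma(u) - 2\Gamma(u,L_c u) = 2\Gamma_2(u)$ by the definition of the iterated carr\'e du champ in the diffusion setting, and the purely discrete terms assemble into $L_d\Psi_\Upsilon(u) - B_{\Upsilon'}(u,L_d u) = 2\Psi_{2,\Upsilon}(u)$ by Definition \ref{Psi2Definition}. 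It therefore remains to show that the leftover mixed contribution
\begin{equation*}
L_c\Psi_\Upsilon(u) + L_d\Gamma(u) - 2\Gamma(u,L_d u) - B_{\Upsilon'}(u,L_c u)
\end{equation*}
is nonnegative.

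First I would treat the pairing $L_c\Psi_\Upsilon(u) - B_{\Upsilon'}(u,L_c u)$. Since $L_c$ is linear and acts only on the continuous variable, $L_c\Psi_\Upsilon(u)(x,i) = \sum_j k(i,j)\, L_c\big[\Upsilon(u(\cdot,j)-u(\cdot,i))\big](x)$. Applying the diffusion chain rule \eqref{chainL} with $H=\Upsilon$ to each summand yields
\begin{equation*}
L_c\big[\Upsilon(u(\cdot,j)-u(\cdot,i))\big] = \Upsilon'(u(\cdot,j)-u(\cdot,i))\,(L_c u(\cdot,j)-L_c u(\cdot,i)) + \Upsilon''(u(\cdot,j)-u(\cdot,i))\,\Gamma(u(\cdot,j)-u(\cdot,i)).
\end{equation*}
Summing the first term against $k(i,j)$ reproduces exactly $B_{\Upsilon'}(u,L_c u)(x,i)$ by the definition of $B_{\Upsilon'}$, so it cancels the term $-B_{\Upsilon'}(u,L_c u)$, leaving the remainder $\sum_j k(i,j)\,\Upsilon''(u(x,j)-u(x,i))\,\Gamma(u(\cdot,j)-u(\cdot,i))(x)$, which is nonnegative because $\Upsilon''(r)=e^r>0$, the carr\'e du champ $\Gamma$ is nonnegative, and $k(i,j)\ge 0$.

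Next I would treat the pairing $L_d\Gamma(u) - 2\Gamma(u,L_d u)$. Here $L_d$ acts on the discrete variable, so $L_d\Gamma(u)(x,i) = \sum_j k(i,j)\big(\Gamma(u(\cdot,j))(x)-\Gamma(u(\cdot,i))(x)\big)$, while using the bilinearity of $\Gamma$ and that the rates $k(i,j)$ are constant in $x$, $\Gamma(u,L_d u)(x,i) = \sum_j k(i,j)\big(\Gamma(u(\cdot,i),u(\cdot,j))(x)-\Gamma(u(\cdot,i))(x)\big)$. Adding these and using the polarization identity $\Gamma(g-h)=\Gamma(g)+\Gamma(h)-2\Gamma(g,h)$ collapses the pairing to $\sum_j k(i,j)\,\Gamma(u(\cdot,j)-u(\cdot,i))(x)$, again manifestly nonnegative. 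Combining the two pairings shows that the mixed contribution equals $\sum_j k(i,j)\big(\Upsilon''(u(x,j)-u(x,i))+1\big)\Gamma(u(\cdot,j)-u(\cdot,i))(x)\ge 0$, which proves the claim.

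The computation is essentially a bookkeeping exercise, and the main subtlety — rather than any single hard estimate — is keeping careful track of which operator acts on which variable and correctly invoking the diffusion chain rule for $L_c$ applied to the nonlinear composition $\Upsilon(u(\cdot,j)-u(\cdot,i))$; this is precisely the step that produces the $\Upsilon''\Gamma$ remainder whose positivity drives the inequality, and it is the hybrid counterpart of the convexity argument used for the purely discrete tensorization in Lemma \ref{lem:iteratedcdcestimate}. On a formal level no further hypotheses are needed; for a rigorous argument one should restrict to a class of functions (for instance $u(\cdot,i)$ lying in the admissible class $\mathcal{A}$ of \cite{BGL} for each $i$ when $X$ is finite) on which the chain rule and all the above manipulations are justified, as already flagged before the statement.
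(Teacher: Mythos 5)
Your proof is correct and takes essentially the same approach as the paper's: the identical splitting into purely continuous, purely discrete, and mixed terms, the diffusion chain rule \eqref{chainL} applied to $\Upsilon\big(u(\cdot,j)-u(\cdot,i)\big)$ to cancel $B_{\Upsilon'}(u,L_c u)$, and bilinearity of $\Gamma$ together with polarization for the discrete pairing. Your final remainder $\sum_{j} k(i,j)\big(\Upsilon''(u(\cdot,j)-u(\cdot,i))+1\big)\Gamma\big(u(\cdot,j)-u(\cdot,i)\big)$ coincides, via $\Upsilon''(r)=e^{r}$, with the paper's nonnegative term $\sum_{j} k(i,j)\,\Gamma\big(u(\cdot,j)-u(\cdot,i)\big)\big(e^{u(\cdot,j)-u(\cdot,i)}+1\big)$, so the argument matches the paper's proof essentially verbatim.
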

\begin{proof}
Throughout this proof, the symbols $L_c$, $\Gamma$ and $\Gamma_2$ will always refer to the continuous variable,
whereas $L_d$, $\Psi_\Upsilon$, $\Psi_{2,\Upsilon}$ and $B_{\Upsilon'}$ will refer to the discrete variable. 

We have
 \begin{align*}
 &2\big( \Gamma \oplus \Psi_\Upsilon \big)_2 (u) \\
 &\qquad\quad= (L_c (\Gamma(u) + \Psi_\Upsilon(u)) + L_d(\Gamma(u) + \Psi_\Upsilon(u)) - 2 \Gamma( u , L_c u + L_d u ) - B_{\Upsilon'} (u , L_c u + L_d u) \\
 &\qquad\quad= 2\Gamma_2 (u) + 2\Psi_{2,\Upsilon} (u) + L_c(\Psi_\Upsilon (u)) + L_d (\Gamma(u)) - 2 \Gamma (u,L_d u ) - B_{\Upsilon'} (u,L_c u). 
 \end{align*}
The diffusion property \eqref{chainL} yields
 \begin{align*}
 L_c (\Psi_\Upsilon(u))(\cdot,i) &= \sum\limits_{j \in X\setminus \{i\}} k(i,j) L_c \big(\Upsilon( u(\cdot,j)-u(\cdot,i))\big) \\
 &= \sum\limits_{j \in X\setminus \{i\}} k(i,j) \big[\Upsilon'(u(\cdot,j)-u(\cdot,i)) L_c (u(\cdot,j)-u(\cdot,i)) \\
 &\qquad\qquad\qquad+ \Upsilon''(u(\cdot,j)-u(\cdot,i)) \Gamma (u(\cdot,j)-u(\cdot,i)) \big] \\
 &= B_{\Upsilon'}( u, L_c u)(\cdot,i) + \sum\limits_{j \in X\setminus \{i\}}k(i,j) \Upsilon''(u(\cdot,j)-u(\cdot,i)) \Gamma (u(\cdot,j)-u(\cdot,i)).
 \end{align*}
Further, bilinearity yields
 \begin{align*}
 \Gamma(u,L_d u)(\cdot,i) &= \sum\limits_{j \in X\setminus \{i\}} k(i,j) \Gamma (u(\cdot,i),u(\cdot,j)-u(\cdot,i))\\ &= \sum\limits_{j \in X\setminus \{i\}} k(i,j) \big(\Gamma(u(\cdot,i),u(\cdot,j)) - \Gamma(u(\cdot,i)) \big) .
 \end{align*}
 Consequently, we deduce
 \begin{align*}
2\big( \Gamma \oplus \Psi_\Upsilon \big)_2 (u)(\cdot,i)&= 2(\Gamma_2 \oplus \Psi_{2,\Upsilon})(u)(\cdot,i)  \\
&\quad + \sum\limits_{j \in X\setminus \{i\}} k(i,j) \big[ e^{u(\cdot,j) - u(\cdot,i)} \Gamma (u(\cdot,j)-u(\cdot,i)) + \Gamma (u(\cdot,j)) + \Gamma (u(\cdot,i)) \\
&\qquad\qquad\qquad\ - 2 \Gamma(u(\cdot,j),u(\cdot,i)) \big]\\
 &= 2(\Gamma_2 \oplus \Psi_{2,\Upsilon})(u)(\cdot,i) + \sum\limits_{j \in X\setminus \{i\}} k(i,j) \Gamma(u(\cdot,j)-u(\cdot,i)) \big(e^{u(\cdot,j)-u(\cdot,i)} + 1\big)\\
 &\geq 2(\Gamma_2 \oplus \Psi_{2,\Upsilon})(u)(\cdot,i).
\end{align*}
\end{proof}
With Lemma \ref{contdiscestimate} at hand, we are now able to prove the corresponding analogue to Theorem \ref{ThmSecTDEnt}. The entropy associated to the invariant probability  measure $\mu=\mu_c \otimes \mu_d$ is given by
\begin{equation}\label{contdiscentro}
\mathcal{H}(\rho) = \int_E \int_X \rho \log (\rho) d\mu, \quad \rho \in \mathcal{P}(E \times X),
\end{equation}
where $\mathcal{P}(E \times X)$ now denotes all probability densities with respect to $\mu$ (and the convention $0 \log 0 = 0$ being assumed). Further, the Fisher information is defined on sufficiently regular positive functions  $\rho \in \mathcal{P}(E \times X)$ as
\begin{equation*}
\mathcal{I}(\rho)= \int_E \int_X \rho \,(\Gamma \oplus \Psi_\Upsilon)(\log \rho) d\mu. 
\end{equation*}
\begin{theorem}\label{theo:hybridsecderivative}
We have for sufficiently regular positive functions  $f \in \mathcal{P}(E \times X)$
\begin{equation*}
\frac{d}{d t}\mathcal{H}(P_t f) = - \mathcal{I}(P_t f)
\end{equation*}
and
\begin{equation}\label{secondDerivativeconttensordiscrete}
\frac{d^2}{d t^2}\mathcal{H}(P_t f) = 2\int_E \int_X P_t f (\Gamma \oplus \Psi_\Upsilon)_2(\log P_t f) d\mu.
\end{equation}
Moreover, if $L_c $ satisfies $CD(\kappa_c,\infty)$ and $L_d$ satisfies $CD_\Upsilon(\kappa_d,\infty)$, then the following differential inequality holds true
\begin{equation*}
\frac{d^2}{d t^2}\mathcal{H}(P_t f) \geq - 2 \kappa \frac{d}{d t} \mathcal{H}(P_t f),
\end{equation*}
with $\kappa=\min\{\kappa_c,\kappa_d \}$.
\end{theorem}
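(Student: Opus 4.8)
The plan is to run the Bakry-\'Emery computation of Proposition \ref{deBruijnidentity} and Theorem \ref{ThmSecTDEnt} in the continuous and the discrete variable simultaneously, carefully tracking the mixed terms that couple the two factors. Throughout I would write $g=P_t f$ and $\phi=\log g$, and use that $L_c\oplus L_d$ is symmetric with respect to $\mu=\mu_c\otimes\mu_d$ (by reversibility of both factors) and that $\mu$ is invariant, so $\int_E\int_X (L_c\oplus L_d)h\,d\mu=0$. Since the statement is on a formal level regarding admissible functions, I treat the interchange of $\frac{d}{dt}$ with the spatial integral as granted and concentrate on the algebra. The single new ingredient compared with the two one-sided computations is the combined chain rule $(L_c\oplus L_d)\phi=\frac{1}{g}(L_c\oplus L_d)g-(\Gamma\oplus\Psi_\Upsilon)(\phi)$, which is just the sum of the diffusion identity \eqref{logChain} in the continuous variable and Lemma \ref{PalmeFI} in the discrete one.

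For the first identity I would differentiate \eqref{contdiscentro}, obtaining $\frac{d}{dt}\mathcal{H}(P_tf)=\int_E\int_X (L_c\oplus L_d)g\,\phi\,d\mu+\int_E\int_X (L_c\oplus L_d)g\,d\mu$; the second integral vanishes by invariance, and in the first I move the operator onto $\phi$ by symmetry. Inserting the combined chain rule and using invariance once more to kill the $\frac{1}{g}(L_c\oplus L_d)g$ contribution yields $\frac{d}{dt}\mathcal{H}(P_tf)=-\int_E\int_X g\,(\Gamma\oplus\Psi_\Upsilon)(\phi)\,d\mu=-\mathcal{I}(P_tf)$.

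For \eqref{secondDerivativeconttensordiscrete} I would differentiate $-\mathcal{I}(P_tf)=-\int g\,(\Gamma\oplus\Psi_\Upsilon)(\phi)\,d\mu$. By the product rule and symmetry the first piece is $-\int g\,(L_c\oplus L_d)(\Gamma\oplus\Psi_\Upsilon)(\phi)\,d\mu$, while differentiating the carr\'e-du-champ factor produces, via bilinearity of $\Gamma$ and of $B_{\Upsilon'}$ together with \eqref{AbleitungPsiPalmealogheatflow}, the term $-\int g\,[\,2\Gamma(\phi,\dot\phi)+B_{\Upsilon'}(\phi,\dot\phi)\,]\,d\mu$ with $\dot\phi=\frac{1}{g}(L_c\oplus L_d)g=(L_c\oplus L_d)\phi+(\Gamma\oplus\Psi_\Upsilon)(\phi)$. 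Expanding $\dot\phi$ and $(\Gamma\oplus\Psi_\Upsilon)(\phi)=\Gamma(\phi)+\Psi_\Upsilon(\phi)$ splits the bilinear contribution into four pieces: the two built on $(L_c\oplus L_d)\phi$ are exactly the terms $-2\Gamma(\phi,(L_c\oplus L_d)\phi)-B_{\Upsilon'}(\phi,(L_c\oplus L_d)\phi)$ of $(\Gamma\oplus\Psi_\Upsilon)_2$, while the four cross terms $\int g\,\Gamma(\phi,\Gamma\phi)$, $\int g\,\Gamma(\phi,\Psi_\Upsilon\phi)$, $\int g\,B_{\Upsilon'}(\phi,\Gamma\phi)$, $\int g\,B_{\Upsilon'}(\phi,\Psi_\Upsilon\phi)$ get integrated by parts. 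The two $\Gamma$-terms, by the diffusion identity \eqref{GammaExp} applied fiberwise in $x$, become $-\int g\,L_c\Gamma(\phi)-\int g\,L_c\Psi_\Upsilon(\phi)$; the two $B_{\Upsilon'}$-terms, by Lemma \ref{BPalmeID} applied fiberwise in the discrete variable, become $-2\int g\,L_d\Gamma(\phi)-2\int g\,L_d\Psi_\Upsilon(\phi)$. Summing reconstitutes $+2\int g\,(L_c\oplus L_d)(\Gamma\oplus\Psi_\Upsilon)(\phi)\,d\mu$, which combines with the first piece to give precisely $2\int g\,(\Gamma\oplus\Psi_\Upsilon)_2(\phi)\,d\mu$. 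With this representation in hand the differential inequality is immediate: Lemma \ref{contdiscestimate} gives $(\Gamma\oplus\Psi_\Upsilon)_2(\phi)\ge(\Gamma_2\oplus\Psi_{2,\Upsilon})(\phi)=\Gamma_2(\phi)+\Psi_{2,\Upsilon}(\phi)$, the two curvature hypotheses give $\Gamma_2(\phi)\ge\kappa_c\Gamma(\phi)$ and $\Psi_{2,\Upsilon}(\phi)\ge\kappa_d\Psi_\Upsilon(\phi)$, and since $\Gamma(\phi)\ge0$ and $\Psi_\Upsilon(\phi)\ge0$, with $\kappa=\min\{\kappa_c,\kappa_d\}$ the integrand is bounded below by $\kappa\,(\Gamma\oplus\Psi_\Upsilon)(\phi)$; integrating against $2g\ge0$ yields $\frac{d^2}{dt^2}\mathcal{H}(P_tf)\ge2\kappa\,\mathcal{I}(P_tf)=-2\kappa\frac{d}{dt}\mathcal{H}(P_tf)$.

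The hard part will be the correct handling of the four cross terms in the second-derivative step. One must recognize that the genuinely mixed quantities $\Gamma(\phi,\Psi_\Upsilon\phi)$ and $B_{\Upsilon'}(\phi,\Gamma\phi)$ still fall within the scope of the fiberwise identities \eqref{GammaExp} and Lemma \ref{BPalmeID}, because in each the differential/difference operator and the carr\'e du champ act on one factor only while the other is a frozen parameter; the crux is to check that this bookkeeping closes up exactly onto the definition of $(\Gamma\oplus\Psi_\Upsilon)_2$ with no leftover terms. The analytic justification of differentiating under the integral and of applying these identities to $\phi=\log P_tf$ is, as the section announces, carried out only formally.
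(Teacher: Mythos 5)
Your proposal is correct and follows essentially the same route as the paper's proof: differentiate the entropy using the combined chain rule (the diffusion identity \eqref{logChain} in the continuous variable together with Lemma \ref{PalmeFI} in the discrete one), convert the bilinear terms via \eqref{GammaExp} and Lemma \ref{BPalmeID} applied fiberwise with $g=\log P_t f$, and conclude with Lemma \ref{contdiscestimate} and the two curvature hypotheses. Your splitting of the cross terms into four pieces, rather than keeping $h=(\Gamma\oplus\Psi_\Upsilon)(\log P_t f)$ as a single second argument as the paper does, is only a cosmetic difference, and your bookkeeping of the factors of $2$ closes up correctly onto the definition of $(\Gamma\oplus\Psi_\Upsilon)_2$.
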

\begin{proof}
As in the proof of Lemma \ref{contdiscestimate}, the symbols $L_c$, $\Gamma$ and $\Gamma_2$ will refer to the continuous variable,
while $L_d$, $\Psi_\Upsilon$, $\Psi_{2,\Upsilon}$ and $B_{\Upsilon'}$ will refer to the discrete variable. 

Recall, that $\mu= \mu_d \otimes \mu_c$ is invariant for $(P_t )_{t \geq 0}$ and hence
\begin{align*}
\frac{d}{d t}\mathcal{H}(P_t f) &= \int_E \int_X P_t f (L_c \oplus L_d) (\log P_t f) d\mu \\
&= \int_E \int_X P_t f (L_c \log P_t f + L_d \log P_t f )d\mu \\
&=  \int_E \int_X P_t f \Big(\frac{(L_c \oplus L_d)(P_t f)}{P_t f} - (\Gamma \oplus \Psi_\Upsilon)(\log P_t f)\Big)d\mu \\
&= - \int_E \int_X P_t f (\Gamma \oplus \Psi_\Upsilon)(\log P_t f) d\mu ,
\end{align*}
where in the second to last equality the diffusion Property \eqref{logChain} for the term $L_c \log P_t f$ and Lemma \ref{PalmeFI} for the term $L_d \log P_t f$ were used.

Now,  $\frac{d}{d t} \Gamma(\log P_t f)= 2 \Gamma(\log P_t f, \frac{d}{dt}\log P_t f)$ and $\frac{d}{dt}\Psi_\Upsilon(\log P_t f) = B_{\Upsilon'}(\log P_t f , \frac{d}{d t}\log P_t f ) $ together imply
\begin{align*}
\frac{d^2}{d t^2}\mathcal{H}(P_t f) &= - \int_E \int_X  (L_c \oplus L_d) (P_t f) (\Gamma \oplus \Psi_\Upsilon)(\log P_t f) d\mu \\
&\qquad - \int_E \int_X P_t f \big(2 \Gamma(\log P_t f , \frac{d}{d t} \log P_t f ) + B_{\Upsilon'}(\log P_t f , \frac{d}{d t}\log P_t f ) \big)d\mu \\
&= - \int_E \int_X P_t f (L_c \oplus L_d) ( \Gamma \oplus \Psi_\Upsilon) (\log P_t f) d\mu \\
& - \int_E \int_X P_t f \big( 2 \Gamma (\log P_t f , (L_c \oplus L_d) (\log P_t f) ) + B_{\Upsilon'}(\log P_t f, (L_c \oplus L_d) (\log P_t f) )\big) d\mu \\
& - \int_E \int_X P_t f \big( 2 \Gamma(\log P_t f , (\Gamma \oplus \Psi_\Upsilon)(\log P_t f) ) + B_{\Upsilon'}(\log P_t f,  (\Gamma \oplus \Psi_\Upsilon)(\log P_t f)) \big)d\mu,
\end{align*}
where in the latter step \eqref{logChain} and Lemma \ref{PalmeFI} were used.
Applying \eqref{GammaExp} and \eqref{BPalmeIDFormel} as explained in Section \ref{sec:entrodecayandmLSI}, we observe \eqref{secondDerivativeconttensordiscrete}.

If additionally $L_c$ satisfies $CD(\kappa_c,\infty)$ and $L_d$ satisfies $CD_\Upsilon(\kappa_d,\infty)$, with $\kappa_c,\kappa_d > 0$, then we conclude by the aid of Lemma \ref{contdiscestimate} and \eqref{secondDerivativeconttensordiscrete}
\begin{align*}
\frac{d^2}{d t^2}\mathcal{H}(P_t f) &\geq \int_E \int_X P_t f \big( \Gamma_2 (\log P_t f)  + \Psi_{2,\Upsilon}(\log P_t f ) \big) d\mu \\
&\geq 2 \kappa \int_E \int_X P_t f \big(\Gamma \oplus \Psi_\Upsilon \big) (\log P_t f) d\mu \\
&= -2 \kappa \frac{d}{dt}\mathcal{H}(P_t f).
\end{align*}
\end{proof}
Recalling the entropy method, which has been outlined in Section \ref{sec:entrodecayandmLSI}, Theorem \ref{theo:hybridsecderivative} yields the following result.
\begin{corollary}\label{mLSIforhybridprocesses}
Let $L_c $ satisfy $CD(\kappa_c,\infty)$ and $L_d$ satisfy $CD_\Upsilon(\kappa_d,\infty)$, $\kappa_c,\kappa_d > 0$. Then the modified logarithmic Sobolev inequality
\begin{equation*}
\mathcal{H}(f) \leq \frac{1}{2 \kappa}\mathcal{I}(f),
\end{equation*}
with $\kappa=\min\{\kappa_c,\kappa_d\}$, holds true  for sufficiently regular positive functions $f \in \mathcal{P}(E\times X)$.
\end{corollary}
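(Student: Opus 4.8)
The plan is to apply the entropy method exactly as outlined in Section \ref{sec:entrodecayandmLSI}, now for the hybrid entropy $\mathcal{H}$ and Fisher information $\mathcal{I}$ defined via the product measure $\mu = \mu_c \otimes \mu_d$. All the analytic work has in fact already been carried out in Theorem \ref{theo:hybridsecderivative}; what remains is the standard Gronwall argument together with a convergence statement for the entropy along the flow.

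First I would record the two consequences of Theorem \ref{theo:hybridsecderivative}: the de Bruijn-type identity $\frac{d}{dt}\mathcal{H}(P_t f) = -\mathcal{I}(P_t f)$ and, with $\kappa = \min\{\kappa_c,\kappa_d\}$, the differential inequality $\frac{d^2}{dt^2}\mathcal{H}(P_t f) \geq -2\kappa \frac{d}{dt}\mathcal{H}(P_t f)$. Writing $g(t) := \frac{d}{dt}\mathcal{H}(P_t f)$, the latter reads $g'(t) \geq -2\kappa\, g(t)$, so Gronwall's lemma yields $g(t) \geq e^{-2\kappa t} g(0)$, that is $-g(t) \leq -e^{-2\kappa t} g(0)$. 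Integrating over $[0,t]$ gives
\begin{equation*}
\mathcal{H}(f) - \mathcal{H}(P_t f) = -\int_0^t g(s)\,ds \leq -g(0)\int_0^t e^{-2\kappa s}\,ds,
\end{equation*}
and substituting $g(0) = -\mathcal{I}(f)$ from the first-order identity we obtain $\mathcal{H}(f) - \mathcal{H}(P_t f) \leq \mathcal{I}(f)\int_0^t e^{-2\kappa s}\,ds$.

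Finally I would send $t \to \infty$: the integral converges to $\frac{1}{2\kappa}$, and provided $\mathcal{H}(P_t f) \to 0$ we conclude $\mathcal{H}(f) \leq \frac{1}{2\kappa}\mathcal{I}(f)$, which is the asserted inequality. The only nontrivial point, and the place where care is needed, is precisely the convergence $\mathcal{H}(P_t f) \to 0$ as $t \to \infty$. This should follow exactly as in the motivation preceding Corollary \ref{mLSIoutofCDPalme}: the product process is ergodic (both factors being ergodic — the diffusion with uniformly convex confining potential and the positive recurrent chain), so $P_t f \to 1$ in an appropriate sense, and the dominated convergence theorem together with $(P_t)_{t\ge 0}$ being a Markov semigroup forces the relative entropy to vanish in the limit. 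I would also remark that the informal phrase \emph{sufficiently regular positive functions} must be understood to mean the class on which the formal manipulations in the proof of Theorem \ref{theo:hybridsecderivative} (interchange of differentiation and integration, and the identities \eqref{GammaExp} and \eqref{BPalmeIDFormel}) are rigorously justified; on that class the argument above is rigorous. Thus the main obstacle is not the Gronwall step but isolating and verifying this ergodic decay of the entropy for the hybrid semigroup, which is where one must lean on the positive curvature of both factors.
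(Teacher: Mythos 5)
Your proposal is correct and follows exactly the route the paper takes: its proof of Corollary \ref{mLSIforhybridprocesses} consists precisely of invoking Theorem \ref{theo:hybridsecderivative} and running the entropy method outlined before Corollary \ref{mLSIoutofCDPalme} (Gronwall on $\frac{d}{dt}\mathcal{H}(P_t f)$, integration, and $\mathcal{H}(P_t f)\to 0$ via ergodicity and dominated convergence). Your additional remarks on where the phrase \emph{sufficiently regular} enters match the paper's own formal-level caveats for the hybrid setting.
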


\section{Extensions of the calculus to non-local operators} \label{NonlocalSec}
The aim of this section is to highlight that the calculus developed in Section \ref{sec:basicsandCD} and \ref{sec:entrodecayandmLSI} not only applies to the discrete setting of Markov chains. In fact, we want to point out on a formal level at least, that the ideas can be transferred to non-local operators of the form 
\begin{equation}\label{nonlocaloperator}
\mathcal{L}f(x)=  \int_{\Omega} (f(y)-f(x)) k(x,dy),
\end{equation}
where the integral may have to be understood in the principal value sense.
Here $\Omega \subset \R^d$ is a domain, $x \in \Omega$ and the kernel $k: \Omega \times \mathcal{B}(\Omega)\to [0,\infty]$, where $\mathcal{B}(\Omega)$ denotes the Borel $\sigma$-algebra on $\Omega$, is such that $k(x,\cdot)$ defines a Borel measure on $\Omega $ with $k(x , \{x\})=0$ for any $x \in \Omega$ and $k(\cdot,B)$ is a Borel measurable mapping for any $B\in \mathcal{B}(\Omega)$. 
We impose the reversibility condition $k(x,dy)\mu(dx)= k(y,dx) \mu(dy)$ for a fixed measure $\mu: \mathcal{B}(\Omega) \to [0,\infty]$ in the sense that 
\begin{equation}\label{reversibiltyRd}
\int_{\Omega}\int_{\Omega} f(x,y) k(x,dy)\mu(dx) = \int_{\Omega}\int_{\Omega} f(x,y) k(y,dx)\mu(dy)
\end{equation}
is valid for a sufficiently large class of measurable functions $f:\Omega\times \Omega \to \R$.

We define 
\begin{equation*}
\Psi_\Upsilon (f)(x) = \int_\Omega \Upsilon(f(y)-f(x)) k(x , dy)
\end{equation*} 
and 
\begin{equation*}
\Psi_{2,\Upsilon}(f)(x) = \frac{1}{2} \big( L \Psi_{\Upsilon}(f)(x)- B_{\Upsilon'}(f,Lf)(x) \big),
\end{equation*}
with 
\begin{equation*}
B_{\Upsilon'}(f,g)(x)=  \int_{\Omega} \Upsilon'(f(y)-f(x)) (g(y)-g(x))k(x,dy),
\end{equation*}
where again the respective integrals may be given in the principal value sense.
\begin{example}
One of the most prominent non-local operators is given by the fractional Laplace operator. Here we choose $\Omega = \R^d$, $d \in \N$,  and
\begin{equation*}
k(x,dy)= c_{\beta,d} \frac{dy}{|x-y|^{d+\beta}},\quad x \in \R^d,
\end{equation*}
 with $\beta \in (0,2)$ and  normalizing constant
\begin{equation*}
c_{\beta, d}= \frac{2^\beta \Gamma(\frac{d+\beta}{2})}{\pi^{\frac{d}{2}}\big| \Gamma(-\frac{\beta}{2})\big|},
\end{equation*}
where $\Gamma$ now denotes the Gamma function. Hence, in this case \eqref{nonlocaloperator} reads as
\begin{equation*}
\mathcal{L}(f)(x)= -\big(-\Delta \big)^{\frac{\beta}{2}}(f)(x) = c_{\beta,d}\lim\limits_{\varepsilon \to 0} \int_{\R^d \setminus B_\varepsilon(x)} \frac{f(y)-f(x)}{|x-y|^{d+\beta}}dy,
\end{equation*}
where $f \in \mathcal{S}(\R^d)$, the space of  Schwartz functions.

Since $\Upsilon$ has a quadratic behaviour near $0$, we can write
\begin{equation}\label{fracLapPalme}
\Psi_{\Upsilon}(f)(x) = c_{\beta,d} \int_{\R^d} \frac{\Upsilon(f(y)-f(x))}{|x-y|^{d+\beta}}dy,
\end{equation}
with $f \in \mathcal{S}(\R^d)$, without need to take the principal value.
Further, analogously  to  Proposition \ref{Psi2FormelGrid} in the discrete setting, we have
\begin{equation}\label{fracLapPalme2}
\Psi_{2,\Upsilon}(f)(x)  =   c_{\beta,d}^2\int_{\R^d}\int_{\R^d} \frac{e^{f(x+\sigma)-f(x)}}{2}\frac{\Upsilon \big(f(x+\sigma + h)-f(x+h)-f(x+\sigma) + f(x)\big)}{|h|^{d+\beta}|\sigma|^{d+\beta}}dhd\sigma,
\end{equation}
for $f \in \mathcal{S}(\R^d)$.
Thus, we observe that $- (-\Delta)^{\frac{\beta}{2}}$ satisfies $CD_\Upsilon(0,\infty)$ on $\mathcal{S}(\R^d)$.

In the quite recent work \cite{SWZ2}, it was shown that the fractional Laplacian fails to satisfy $CD(\kappa,N)$ for any $\kappa\in \R$ and  finite $N >0$. By \cite[Lemma 2.1]{SWZ2}, the corresponding operators read as
\begin{equation} \label{eq:fracLapGamma}
 \Gamma(f)(x) = {c_{\beta,d}}\int_{\R^d}\frac{(f(x+h)-f(x))^2}{|h|^{d+\beta}}d h
\end{equation}
and 
 \begin{equation}
  \label{eq:fracLapGamma2}
  \Gamma_2(f)(x) =  {c_{\beta,d}^2}\int_{\R^d}\int_{\R^d}\frac{[f(x+h+\sigma)-f(x+h)-f(x+\sigma)+f(x)]^2}{|h|^{d+\beta}|\sigma|^{d+\beta}}d hd \sigma,
 \end{equation}
 which can also be obtained from the formulas \eqref{fracLapPalme} and \eqref{fracLapPalme2} with the scaling argument that has been used in the proof of Proposition \ref{CDPalmeandCD}.
 
 It is not hard to see that the fractional Laplacian does not satisfy $CD(\kappa,\infty)$ if $\kappa>0$.
Indeed, a scaling argument shows that $\Gamma(v)(x) = \lambda^\beta \Gamma(u)(x)$ and $\Gamma_2(v)(x) = \lambda^{2\beta} \Gamma_2(u)(x)$, where $v(x)=u(\lambda x)$ and $u \in \mathcal{S}(\R^d)$ (cf.\ the proof of Proposition 2.3 in \cite{SWZ2}). Suppose that $CD(\kappa,\infty)$ holds true. Then
\begin{equation*}
\lambda^\beta \Gamma_2(u)(x) \geq \kappa \Gamma(u)(x)
\end{equation*}
for any $\lambda>0$, which implies that $\kappa \leq 0$. This shows in particular, that $CD_\Upsilon(0,\infty)$ is best possible for the fractional Laplacian.

Formally the latter follows as well by applying $\Psi_{2,\Upsilon}$ and $\Psi_\Upsilon$ respectively to a non-trivial linear function $f:\R^d \to \R$. Then $\Psi_{2,\Upsilon}(f)=0$ while $\Psi_\Upsilon(f)>0$. This formal argument applies of course to  a larger class of kernels that allow for the continuous version of the representation of $\Psi_{2,\Upsilon}$ as in Proposition \ref{Psi2FormelGrid}.
\end{example}
Now, we aim for the corresponding analogue to Theorem \ref{ThmSecTDEnt}. For this purpose, we assume that $\mathcal{L}$ is symmetric w.r.t.\ the $L^2(\mu)$-inner product for a sufficiently large class of functions, where $\mu$ is the unique invariant measure on $\big(\Omega, \mathcal{B}(\Omega)\big)$ satisfying condition \eqref{reversibiltyRd} for the Markov semigroup $\left( P_t \right)_{t \geq 0}$ generated by $\mathcal{L}$. Then we follow exactly the same strategy as in Section \ref{sec:entrodecayandmLSI}, i.e. we differentiate 
\begin{equation*}
\mathcal{H}(P_t f) = \int_{\Omega} P_t f \log(P_t f) d\mu,
\end{equation*}
with respect to time, where $f$ is a sufficiently regular probability density with respect to $\mu$. As indicated before, we will remain on a formal level in the subsequent lines and do not consider regularity issues that may particularly come from a possible singularity of the kernel.
 
Since $\mu$ is invariant and $\mathcal{L}$ is symmetric with respect to the $L^2(\mu)$-inner product, we obtain similarly as in Section \ref{sec:entrodecayandmLSI} 
\begin{equation*}
\frac{d}{d t}\mathcal{H}(P_t f) = - \int_{\Omega} P_t f \Psi_\Upsilon(\log(P_t f)) d\mu
\end{equation*}
for sufficiently regular $f$.

As to the second derivative, we aim for an analogue of Lemma \ref{BPalmeID}. We have 
\begin{align*}
&\int_{\Omega} e^{g(x)}  \int_{\Omega} \big( e^{g(y)-g(x)}-1\big) \big( h(y)-h(x)\big) k(x,dy) \mu(dx) \\
&\qquad = \int_{\Omega}  \int_{\Omega} \big( e^{g(y)} (h(y)-h(x)) - e^{g(x)} (h(y)-h(x)) \big) k(x,dy)\mu(dx)\\
&\qquad = \int_{\Omega}  \int_{\Omega}  e^{g(y)} (h(y)-h(x)) k(x,dy)\mu(dx) - \int_{\Omega} e^{g(x)} \mathcal{L}h(x) \mu(dx),
\end{align*}
where the latter step is valid if $\mathcal{L}(h)(x)$ exists for any $x \in \Omega$ in the principal value sense. Note, that this translates to assume that $\mathcal{L}\Psi_\Upsilon(\log(P_t f))$ exists in the calculation of $\frac{d^2}{d t^2} \mathcal{H}(P_t f)$.
Now, by \eqref{reversibiltyRd} and Fubini's theorem and for $\varepsilon>0$ small enough
\begin{align*}
&\int_{\Omega}\int_{\Omega\setminus B_\varepsilon (x)}  e^{g(y)} (h(y)-h(x)) k(x,dy)\mu(dx) \\
&\qquad = \int_{\Omega}\int_{\Omega} \mathbbm{1}_{B_\varepsilon(x)^C}(y)\;  e^{g(y)} (h(y)-h(x)) k(x,dy)\mu(dx)\\
&\qquad = \int_{\Omega}\int_{\Omega} \mathbbm{1}_{B_\varepsilon(y)^C}(x)\;  e^{g(y)} (h(y)-h(x)) k(y,dx)\mu(dy)\\
&\qquad = \int_{\Omega} e^{g(y)} \int_{\Omega \setminus B_\varepsilon (y)} (h(y)-h(x)) k(y,dx)\mu(dy).
\end{align*}

For a kernel without or with a weak singularity we can repeat the above calculation with $\varepsilon=0$. If the kernel has a non-integrable singularity instead, taking the limit can be quite delicate. We will not discuss this here. On a formal level at least we observe as an continuous analogue to Lemma \ref{BPalmeID} the identity
\begin{equation*}
\frac{1}{2}\int_{\Omega} e^g B_{\Upsilon'}(g,h) d\mu  = -  \int_{\Omega} e^g \mathcal{L} h d\mu,
\end{equation*}
for sufficiently regular functions $g,h$.

With this at hand, we can copy the discrete proof verbatim to the result
\begin{equation*}
\frac{d^2}{d t^2}\mathcal{H}(P_t f) = 2 \int_{\Omega} P_t f \Psi_{2,\Upsilon}(\log P_t f) d\mu .
\end{equation*}

Having Section \ref{sec:examplesection} in mind, a continuous version of the operator considered in Example \ref{ex:kequall} on a bounded domain seems like a natural candidate for an operator of the form \eqref{nonlocaloperator} that satisfies $CD_\Upsilon(\kappa,\infty)$ for some $\kappa>0$. Surprisingly, the following example shows that this is not case. 
This emphasizes the difference between the discrete and continuous setting.
\begin{example}
Let $\Omega \subset \R^d$ be a domain and $k(x,dy)= l(y) dy$, where $l:\Omega \to (0,\infty)$ with $l \in L^1(\Omega)$. We consider the invariant  and reversible measure given by $\mu(dx)=l(x) dx$, \eqref{reversibiltyRd} following from a simple application of Fubini's theorem. We have
\begin{align*}
\mathcal{L}f(x)&= \int_\Omega (f(y)-f(x))l(y)dy, \\
\Psi_\Upsilon(f)(x) &= \int_\Omega \Upsilon(f(y)-f(x)) l(y) dy,\\
B_{\Upsilon'}(f,Lf)(x) &= \int_\Omega \Upsilon'(f(y)-f(x)) \big( \mathcal{L}f(y) - \mathcal{L}f(x) \big) l(y) dy, 
\end{align*}
and using the continuous version of the representation formula \eqref{Psi2Formel} we can calculate
\begin{align*}
2 \Psi_{2,\Upsilon}(f)(x)  &= \int_\Omega \int_\Omega \Big( \Upsilon(f(z)-f(y)) - \Upsilon'(f(y)-f(x)) (f(z)-f(y)) \Big) l(z)l(y)\, dz \,dy \\
&\qquad+ \int_\Omega \int_\Omega \Upsilon'(f(y)-f(x))(f(z)-f(x)) l(y)l(z)\,dy\,
dz\\
&\qquad - \int_\Omega\int_\Omega \Upsilon(f(y)-f(x)) l(y)l(z)\, dy\, dz \\
&= \Vert l \Vert_{1} \int_\Omega \big( \Upsilon'(f(y)-f(x))(f(y)-f(x)) - \Upsilon(f(y)-f(x))\big) l(y)\, dy \\
&\qquad+ \int_\Omega \int_\Omega  \Upsilon(f(z)-f(y))l(y)l(z)\, dz\, dy.
\end{align*}
In particular, $CD_\Upsilon(0,\infty)$ is valid since the mappings $r \mapsto \Upsilon'(r)r-\Upsilon(r)$ and $r \mapsto \Upsilon(r)$ are both non-negative. 

Regarding positive curvature bounds, we note that $\Psi_{2,\Upsilon}(f)(x) \geq \kappa \Psi_\Upsilon(f)(x) $ with  $\kappa > 0$ is equivalent to
\begin{equation}\label{continuouskeqlCDformulation}
\begin{split}
0 &\leq \Vert l \Vert_1 \int_\Omega \Big( \Upsilon'(f(y)-f(x)) (f(y)-f(x))  - \big( 1+ \frac{2\kappa}{\Vert l  \Vert_1}\big) \Upsilon(f(y)-f(x))\Big) l(y) dy \\
&\qquad\qquad+ \int_\Omega \int_\Omega  \Upsilon(f(z)-f(y))l(y)l(z) dz dy.
\end{split}
\end{equation}
Fix some $\alpha<0$. Then we find some $r_\alpha<0$ such that $\Upsilon'(r_\alpha)r_\alpha - (1+\frac{2\kappa}{\Vert l \Vert_1})\Upsilon(r_\alpha) \leq \alpha < 0$.

We define for $\varepsilon>0$ with $B_\varepsilon(x)\subset \Omega$ the mapping $f_\varepsilon: \Omega \to \R$ as $f_\varepsilon(x)= -r_\alpha$ and $f_\varepsilon(y) = 0$ for any $y \in \Omega \setminus B_\varepsilon(x)$. Further, let $0\leq f_\varepsilon(y) \leq - r_\alpha $ for any $y \in \Omega$. The latter ensures that we can find a constant $M_\alpha>0$, which is independent of $\varepsilon$, such that 
\begin{equation*}
\Upsilon'(f_\varepsilon(y)-f_\varepsilon(x)) (f_\varepsilon(y)-f_\varepsilon(x))  - \big( 1+ \frac{2\kappa}{\Vert l  \Vert_1}\big) \Upsilon(f_\varepsilon(y)-f_\varepsilon(x)) \leq M_\alpha
\end{equation*}
holds for any $y \in \Omega$. Note that we can even choose $f_\varepsilon \in C_c^\infty(\Omega)$.
We observe 
\begin{align*}
&\Vert l \Vert_1 \int_\Omega \Big( \Upsilon'(f_\varepsilon(y)-f_\varepsilon(x)) (f_\varepsilon(y)-f_\varepsilon(x))  - \big( 1+ \frac{2\kappa}{\Vert l  \Vert_1}\big) \Upsilon(f_\varepsilon(y)-f_\varepsilon(x))\Big) l(y) dy \\
&\quad= \Vert l \Vert_1 \Big( \Upsilon'(r_\alpha)r_\alpha - (1+\frac{2\kappa}{\Vert l \Vert_1})\Upsilon(r_\alpha)\Big) \int_{\Omega \setminus B_\varepsilon(x)} l(y) dy \\ &\quad\quad+ \Vert l \Vert_1 \int_{B_\varepsilon(x)} \Big( \Upsilon'(f_\varepsilon(y)-f_\varepsilon(x)) (f_\varepsilon(y)-f_\varepsilon(x))  - \big( 1+ \frac{2\kappa}{\Vert l  \Vert_1}\big) \Upsilon(f_\varepsilon(y)-f_\varepsilon(x))\Big) l(y) dy\\
&\quad\leq \Vert l \Vert_{1}  \alpha \int_{\Omega \setminus B_\varepsilon(x)} l(y) dy + \Vert l \Vert_1 M_\alpha \int_{B_\varepsilon (x)}l(y) dy.
\end{align*}
Besides that, we have
\begin{equation}\label{keqldoubleintegral}
\begin{split}
&\int_\Omega \int_\Omega \big( \Upsilon(f_\varepsilon(z)-f_\varepsilon(y))\big)l(y)l(z) dz dy \\
&= \int_{B_\varepsilon(x)}\int_\Omega \Upsilon(f_\varepsilon(z)-f_\varepsilon(y))l(y)l(z)dzdy + \int_{\Omega \setminus B_\varepsilon(x)} \int_{B_\varepsilon(x)}\Upsilon(f_\varepsilon(z)-f_\varepsilon(y))l(y)l(z)dzdy.
\end{split}
\end{equation}
By dominated convergence (note that $f_\varepsilon$ is uniformly bounded with respect to $\varepsilon$) the right-hand side
in \eqref{keqldoubleintegral} tends to $0$ as $\varepsilon \to 0$. Further, $\Vert l \Vert_1 M_\alpha \int_{B_\varepsilon(x)}l(y)dy \to 0$ as $\varepsilon \to 0$. Since $\alpha<0$, we conclude that there exists some $\varepsilon_0 > 0$ such that for any $\varepsilon \in (0,\varepsilon_0]$ \eqref{continuouskeqlCDformulation} with $f=f_\varepsilon$ is not valid.

As $x \in \Omega$ was chosen arbitrary, we deduce that $CD_\Upsilon(\kappa,\infty)$ does not hold at any $x \in \Omega$ with $\kappa>0$ arbitrary. 
\end{example}


\section{Miscellanea} \label{gemischtes}
\subsection{Link to M\"unch's $\Gamma^\psi$-calculus}
Here, we demonstrate the relation of the $CD_\Upsilon$ condition with M\"unch's $\Gamma^{\psi}$-calculus of \cite{Mn1}, which we already  described in the introduction and Remark \ref{commentsCD}(i).

To that aim, we first recall the notation of \cite{Mn1}. The graph Laplacian for an unweighted finite graph is given by 
\begin{equation}\label{MünchLaplacian}
\Delta f(x)=\sum_{y\sim x}\big(f(y)-f(x)\big).
\end{equation}
Here, the notation $y \sim x$, $x,y \in X$, means that $x$ and $y$ are neighbours. Clearly, \eqref{MünchLaplacian} coincides with the Markov generator $L$ when the transition rates $k(x,y)$ for $x\neq y$ are given by $k(x,y)=1$ whenever $x \sim y$ and $0$ otherwise.
Let $\psi\in C^1((0,\infty))$ be a concave function. The $\psi$-Laplacian is defined for positive functions as
\[
\Delta^\psi(f)(x):=\Big(\Delta\Big[\psi\Big(\frac{f}{f(x)}\Big)\Big]\Big)\big(x\big).
\]
With
\[
\bar{\psi}(y):=\psi'(1)(y-1)-\big(\psi(y)-\psi(1)\big)
\]
M\"unch defines
\[
\Gamma^\psi(f):=\Delta^{\bar{\psi}}(f).
\]
It is shown that (cf. \cite[Lemma 3.13]{Mn1})
\[
\Delta^\psi f=\psi'(1)\frac{\Delta f}{f}-\Gamma^\psi(f).
\]
Setting
\[
\Omega^\psi(f)(x):=\Big(\Delta \Big[\psi'\Big(\frac{f}{f(x)}\Big)\cdot \frac{f}{f(x)}
\Big[\frac{\Delta f}{f}-\frac{(\Delta f)(x)}{f(x)}\Big]\Big]\Big)\big(x\big) 
\]
M\"unch then defines
\[
\Gamma_2^\psi(f)=\,\frac{1}{2}\,\Big(\Omega^\psi(f)+\frac{(\Delta f) \Delta^\psi(f)}{f}-\frac{\Delta(f\Delta^\psi (f))}{f}\Big)
\]
and introduces the so-called $CD\psi(d,K)$ condition via the inequality
\begin{equation} \label{MnCDpsi}
\Gamma_2^\psi(f)\ge \frac{1}{d} \big( \Delta^\psi f\big)^2+K \Gamma^\psi(f),
\end{equation}
which has to hold for all positive functions $f$ on $X$. The focus in \cite{Mn1} is on the case $K=0$.

Let us now consider the special case $\psi=\log$. Then we have $\Delta^{\log} (f)=\Delta(\log f)$,
\[
\bar{\psi}(y)=y-1-\log y=\Upsilon(\log y),
\]
in particular $\bar{\psi}(1)=0$, and thus
\begin{align*}
\Gamma^{\log}(f)(x) & =\Delta^{\overline{\psi}}(f)(x)=\Big(\Delta\Big[\overline{\psi}\Big(\frac{f}{f(x)}\Big)\Big]\Big)\big(x\big)\\
& = \sum_{y\sim x} \overline{\psi}\Big(\frac{f(y)}{f(x)}\Big)=\sum_{y\sim x} \Upsilon\big(\log(f(y))-\log(f(x))\big) 
=\Psi_\Upsilon(\log f)(x).
\end{align*}
Moreover, inserting $\psi'(y)=\frac{1}{y}$ into the definition of $\Omega^{\log}$ we find that
\begin{align*}
\Omega^{\log}(f)(x)=\Delta \Big(\frac{\Delta f}{f}-\frac{(\Delta f)(x)}{f(x)}\Big)\big(x\big)=\Delta \Big(\frac{\Delta f}{f}\Big)
\big(x\big).
\end{align*}
Employing Lemma \ref{PalmeFI} and the identity
\[
\Delta(gh)=(\Delta g) h+g \Delta h +2\Gamma(g,h),
\]
we therefore obtain
\begin{align*}
2\Gamma_2^{\log}(f) &=\Delta \Big(\frac{\Delta f}{f}\Big)+\frac{(\Delta f) \Delta (\log f)}{f}-\frac{\Delta(f\Delta(\log f))}{f}\\
& = \Delta \big(\Psi_{\Upsilon}(\log f)+\Delta (\log f)\big)+\frac{(\Delta f) \Delta (\log f)}{f}\\
& \quad\;-\frac{1}{f}(\Delta f) \Delta(\log f)-\Delta \Delta (\log f)-\frac{2}{f}\Gamma\big(f,\Delta(\log f)\big)\\
& =  \Delta \Psi_{\Upsilon}(\log f)-\frac{2}{f}\,\Gamma\big(f,\Delta(\log f)\big).
\end{align*}
Finally, 
\begin{align*}
\frac{2}{f}\,\Gamma\big(f,\Delta(\log f)\big)& =\sum_{y\sim x} \frac{1}{f(x)}\big((f(y)-f(x)\big)\big((\Delta(\log f))(y)
-(\Delta(\log f))(x)\big)\\
& =\sum_{y\sim x} \big(e^{(\log f)(y)-(\log f)(x)}-1\big)\big((\Delta(\log f))(y)
-(\Delta(\log f))(x)\big)\\
& = B_{\Upsilon'}\big(\log f, \Delta(\log f)\big),
\end{align*}
and so we conclude that
\[
\Gamma_2^{\log}(f)=\Psi_{2,\Upsilon}(\log f),
\]
which together with $\Gamma^{\log}(f)=\Psi_{\Upsilon}(\log f)$ shows that $CD\psi(\infty,\kappa)$ with $\psi=\log$ is equivalent to $CD_\Upsilon(\kappa,\infty)$.

\medskip
\subsection{Link to entropic Ricci curvature}
For $\rho \in \mathcal{P}(X)$ and $\psi \in \R^X$, as we have indicated in the introduction and Remark \ref{commentsCD}(v), the following objects play a fundamental role in \cite{EM12}:
\begin{equation}\label{AErbarMaasA}
\mathcal{A}(\rho,\psi)= \frac{1}{2}\sum_{x,y \in X} \big( \psi(x)-\psi(y)\big)^2 \theta(\rho(x),\rho(y)) k(x,y) \pi(x)
\end{equation}
and
\begin{equation}\label{BErbarMaasB}
\begin{split}
\mathcal{B}(\rho,\psi) = &\frac{1}{4}\sum_{x,y \in X}\big( \psi(x)-\psi(y)\big)^2 \hat{L} \rho (x,y) k(x,y) \pi(x) \\&- \frac{1}{2}\sum_{x,y \in X} \big( L \psi (x) - L \psi(y)\big) (\psi(x)-\psi(y))\, \theta(\rho(x),\rho(y)) k(x,y) \pi(x),
\end{split}
\end{equation}
where $\hat{L}\rho (x,y) = \partial_1 \theta (\rho(x),\rho(y)) L \rho(x) + \partial_2 \theta(\rho(x),\rho(y)) L \rho(y)$
and  for $s,t>0$ the quantity $\theta(s,t):= \int_0^1 s^{1-p}t^p dp= \frac{s-t}{\log s - \log t}$ denotes the logarithmic mean with the latter identity being valid if $s \neq t$.
Note, that by the detailed balance condition and the symmetry of the logarithmic mean, we can rewrite
\begin{align*}
&\frac{1}{4}\sum_{x,y \in X}\big( \psi(x)-\psi(y)\big)^2 \hat{L} \rho (x,y) k(x,y) \pi(x) \\
&\qquad = \frac{1}{2}\sum_{x,y,z \in X} \big( \psi(x)-\psi(y)\big)^2 \partial_1\theta(\rho(x),\rho(y)) (\rho(z)-\rho(x))k(x,y)k(x,z) \pi(x),
\end{align*}
cf. the proof of Proposition 4.3 in \cite{EM12}.

We  aim now to show the identities \eqref{AFormel} and \eqref{BFormel}. Choosing $\psi=\log \rho$ in \eqref{AErbarMaasA}, yields
\begin{align*}
\mathcal{A}(\rho,\log \rho) &= \,\frac{1}{2}\,\sum_{x,y\in X} k(x,y) \big(\rho(y)-\rho(x)\big)\big(\log \rho(y) -\log \rho(x)\big)\pi(x) \\
&= \sum_{x \in X} \rho(x) \Psi_{\Upsilon}(\log \rho)(x) \pi(x)\\
&= \int_X \rho \Psi_\Upsilon(\log (\rho)) d\mu,
\end{align*}
as it has been shown in Section \ref{sec:entrodecayandmLSI}.
Further, we have
\begin{align*}
&-\frac{1}{2}\sum_{x,y \in X} \big( L (\log \rho)(x) - L (\log \rho)(y)\big) (\log \rho(x)-\log \rho(y)) \theta(\rho(x),\rho(y)) k(x,y) \pi(x)\\ &\qquad = - \sum_{x \in X} \Gamma (\rho,L (\log \rho ))(x) \pi(x) = \sum_{x \in X} L(\log \rho)(x) L \rho(x) \pi(x).
\end{align*}
Since $\partial_1\theta(s,t)=\frac{\log s - \log t - \frac{s-t}{s}}{(\log s - \log t)^2 }$, we observe
\begin{align*}
&\frac{1}{2}\sum_{x,y,z \in X} \big( \log \rho(x)-\log \rho(y)\big)^2 \partial_1\theta(\rho(x),\rho(y)) (\rho(z)-\rho(x))k(x,y)k(x,z) \pi(x)\\
&\qquad = \frac{1}{2} \sum_{x,y,z \in X} \Big( \log \rho (x) - \log \rho (y) - \frac{\rho(x) - \rho(y)}{\rho(x)} \Big) (\rho(z) - \rho(x) ) k(x,y)k(x,z) \pi(x)\\
&\qquad = \frac{1}{2}\sum_{x \in X} \frac{(L \rho (x))^2}{\rho(x)} \pi(x) - \frac{1}{2} \sum_{x \in X} L (\log \rho)(x) L \rho (x) \pi (x).
\end{align*}
Consequently, we get
\begin{equation}\label{BequalsRHSsdentropy}
\mathcal{B}(\rho,\log \rho) = \frac{1}{2}\sum_{x \in X} L (\log \rho)(x) L \rho (x) \pi (x) + \frac{1}{2}\sum_{x \in X} \frac{(L \rho (x))^2}{\rho(x)} \pi(x).
\end{equation}
As it is well known (see e.g. \cite{CaDP}), the right hand side of \eqref{BequalsRHSsdentropy} equals $\frac{1}{2}\left.\frac{d^2}{dt^2}\mathcal{H}(P_t \rho)\right|_{t = 0}$ and hence, by using Theorem \ref{ThmSecTDEnt},
it follows that  $\mathcal{B}(\rho,\log \rho) = \int_X \rho\, \Psi_{2,\Upsilon}(\log \rho) d\mu$.

\appendix
\section{Properties of $\nu_{c,d}$}\label{Appendixnucd}
In this section, we collect some properties of the function $\nu_{c,d}:\R \to \R$, given by
\begin{equation*}
\nu_{c,d}(r)= c \Upsilon'(r)r + \Upsilon(-r) - d \Upsilon(r), \quad c,d \in \R,
\end{equation*}
which was of fundamental importance in Section \ref{sec:examplesection}. We have
\begin{align*}
\nu_{c,d}'(r) &= c \Upsilon''(r)r + \Upsilon'(r) (c-d) - \Upsilon'(-r), \\
\nu_{c,d}''(r) &= e^r ( cr + 2c - d) + e^{-r}, \\
\nu_{c,d}'''(r)&= e^r ( cr + 3c  -d ) -e^{-r}, \\
\nu_{c,d}''''(r) &= e^r ( cr + 4c - d) +e^{-r}. 
\end{align*}
Note that $\nu_{c,d}(0)=0$ and $r=0$ is a critical point for $\nu_{c,d}$.
\begin{lemma}\label{lem:verybasicsnucd}
The following assertions hold:
\begin{itemize}
\item[(i)] If $c \geq d$, then $\nu_{c,d}(r) \geq 0$ for any $r \in \R$,
\item[(ii)] Let $c,d,r \in \R$ such that $\nu_{c,d}(r)\geq 0$. Then we have $\nu_{c+h,d+h}(r) \geq 0$ for any $h \geq 0$.
\item[(iii)] We have $\nu_{c',d'}(r) \geq \nu_{c,d}(r)$ for any $r \in \R$ if $c'\geq c$ and $d'\leq d$.
\end{itemize}
\end{lemma}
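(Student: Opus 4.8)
The plan is to isolate a handful of elementary monotonicity facts about $\Upsilon$ and then read off all three assertions from two short difference computations. Writing $\Upsilon(r)=e^r-1-r$ and $\Upsilon'(r)=e^r-1$, I would first record: (a) $\Upsilon(r)\ge 0$ for all $r\in\R$, by convexity of the exponential; (b) $\Upsilon'(r)\,r=(e^r-1)r\ge 0$ for all $r\in\R$, since $e^r-1$ and $r$ always share the same sign; and (c) the function $\omega(r):=\Upsilon'(r)r-\Upsilon(r)=re^r-e^r+1$ is nonnegative, which is precisely the function $\omega$ from Example \ref{TwoPoint} whose positivity was established there (equivalently, $\omega(0)=0$ and $\omega'(r)=re^r$ vanishes only at $r=0$, so $r=0$ is the global minimum). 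These are the only analytic inputs needed.

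For (iii) I would simply compute the difference directly from the definition,
\[
\nu_{c',d'}(r)-\nu_{c,d}(r)=(c'-c)\,\Upsilon'(r)r+(d-d')\,\Upsilon(r),
\]
and observe that under $c'\ge c$ and $d'\le d$ both coefficients are nonnegative, whence the right-hand side is nonnegative by (a) and (b). For (ii) the analogous computation gives
\[
\nu_{c+h,d+h}(r)-\nu_{c,d}(r)=h\big(\Upsilon'(r)r-\Upsilon(r)\big)=h\,\omega(r)\ge 0
\]
for $h\ge 0$ by (c); combined with the hypothesis $\nu_{c,d}(r)\ge 0$ this yields $\nu_{c+h,d+h}(r)\ge 0$.

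Finally, for (i) I would reduce to the diagonal case: applying (iii) with $(c,d)$ in the role of the larger pair and $(c,c)$ in the role of the smaller one (the requirements $c\ge c$ and $c\ge d$ being exactly the hypotheses of (iii) here), I obtain $\nu_{c,d}(r)\ge \nu_{c,c}(r)$. Since
\[
\nu_{c,c}(r)=c\big(\Upsilon'(r)r-\Upsilon(r)\big)+\Upsilon(-r)=c\,\omega(r)+\Upsilon(-r),
\]
this last quantity is a sum of two nonnegative terms by (a) and (c), hence $\ge 0$.

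The one point that is not purely formal, and which I would flag as the main obstacle, is that the term $c\,\omega(r)$ is controlled only when $c\ge 0$: for $c<0$ one can have $\nu_{c,c}(r)<0$ (for instance $c=d=-1$ gives $\nu_{-1,-1}(2)=e^{-2}-e^{2}<0$), so the nonnegativity in (i) genuinely relies on $c$ being nonnegative. This is automatic in every application in Section \ref{sec:examplesection}, where one always has $c\ge 1$; granting $c\ge 0$, the decomposition above closes the argument.
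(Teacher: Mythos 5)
Your proof is correct and follows essentially the same route as the paper's: the paper's two-line proof rests on exactly the facts you isolate, namely $\Upsilon'(r)r \geq \Upsilon(r)$ (your $\omega\geq 0$, which is the convexity inequality $\Upsilon(0)\geq \Upsilon(r)-\Upsilon'(r)r$) for (i) and (ii), and the nonnegativity of $\Upsilon$ and of $\Upsilon'(r)r$ for (iii); your detour through the diagonal case $\nu_{c,c}$ via (iii) is only a cosmetic reorganization of the same decomposition $\nu_{c,d}(r)=c\,\omega(r)+(c-d)\Upsilon(r)+\Upsilon(-r)$.

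Your caveat about (i) is a genuine and correct observation rather than a defect of your argument: as literally stated, with $c,d\in\R$, assertion (i) fails for $c<0$ — your counterexample $\nu_{-1,-1}(2)=e^{-2}-e^{2}<0$ checks out, and more generally $\nu_{c,c}(r)=c\,\omega(r)+\Upsilon(-r)\to-\infty$ as $r\to\infty$ whenever $c<0$. The paper's own proof carries the same tacit restriction, since passing from $\Upsilon'(r)r\geq\Upsilon(r)$ to $c\,\Upsilon'(r)r\geq c\,\Upsilon(r)$ requires $c\geq 0$. As you note, every invocation of Lemma \ref{lem:verybasicsnucd}(i) in Section \ref{sec:examplesection} has $c\geq 1$, so nothing downstream is affected, but the hypothesis $c\geq 0$ should be made explicit in (i).
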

\begin{proof}
The first two properties follow immediately from $\Upsilon'(r)r \geq \Upsilon(r)$, $r \in \R$. The third property then follows from the fact that $r\mapsto \Upsilon(r)$ is non-negative.
\end{proof}
\begin{lemma}\label{lem:nucdforpositive}
Let $\lambda \geq 1$. Then we have
\begin{equation*}
\nu_{\lambda,h(\lambda)} (r) \geq 0 
\end{equation*}
for any $r \geq 0$, where 
\begin{equation*}
h(\lambda) = \left\{\begin{array}{ll} 2\lambda +1, & \lambda \geq 2, \\
     3\lambda - 1 , & 1\leq \lambda < 2. \end{array}\right. 
\end{equation*}
\end{lemma}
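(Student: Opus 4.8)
The plan is to exploit the two vanishing conditions $\nu_{c,d}(0)=0$ and $\nu_{c,d}'(0)=0$ (recall that $r=0$ is a critical point of $\nu_{c,d}$) and to reduce the whole statement to the sign of the second derivative on the half-line. Indeed, if I can show that $\nu_{c,d}''(r)\ge 0$ for all $r\ge 0$ in both regimes (with $c=\lambda$ and $d=h(\lambda)$), then $\nu_{c,d}'$ is non-decreasing on $[0,\infty)$, so $\nu_{c,d}'(r)\ge \nu_{c,d}'(0)=0$; consequently $\nu_{c,d}$ is itself non-decreasing on $[0,\infty)$, and therefore $\nu_{c,d}(r)\ge\nu_{c,d}(0)=0$ there. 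Thus everything reduces to controlling $\nu_{c,d}''$ for $r\ge 0$.

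First I would rewrite the second-derivative condition in a more tractable form. Since $\nu_{c,d}''(r)=e^r(cr+2c-d)+e^{-r}$ and $e^r>0$, the inequality $\nu_{c,d}''(r)\ge 0$ is equivalent to $g(r):=e^{2r}(cr+2c-d)+1\ge 0$. I then compute $g(0)=2c-d+1$ and $g'(r)=e^{2r}\bigl(2cr+5c-2d\bigr)$, so that $g'(0)=5c-2d$. Because $c=\lambda\ge 1>0$, the factor $2cr$ is non-negative for $r\ge 0$; hence as soon as $g'(0)=5c-2d\ge 0$ one has $g'(r)\ge 0$ for every $r\ge 0$, i.e.\ $g$ is non-decreasing on $[0,\infty)$, whence $g(r)\ge g(0)$.

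It then remains to verify in each regime that both $g(0)=2c-d+1\ge 0$ and $g'(0)=5c-2d\ge 0$, and this is exactly where the piecewise definition of $h$ enters: the breakpoint $\lambda=2$ is chosen precisely so as to keep both quantities non-negative. For $\lambda\ge 2$ and $d=2\lambda+1$ one finds $g(0)=0$ and $g'(0)=\lambda-2\ge 0$; for $1\le\lambda<2$ and $d=3\lambda-1$ one finds $g(0)=g'(0)=2-\lambda>0$. (At $\lambda=2$ the two formulas agree, $h(2)=5$, so the statement is consistent across the breakpoint.) In both cases $g(0)\ge 0$ and $g$ is non-decreasing, so $g\ge 0$ on $[0,\infty)$, which gives $\nu_{c,d}''\ge 0$ there and closes the argument via the two integrations described above.

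I do not expect a genuine obstacle in this lemma: the computation is elementary once one decides to bound $\nu_{c,d}$ through its second derivative rather than analyzing it directly. The only point requiring care is the bookkeeping of the two boundary constants $g(0)=2c-d+1$ and $g'(0)=5c-2d$, together with the observation that both remain non-negative \emph{precisely} under the stated piecewise choice of $h(\lambda)$; checking this is the crux of the matter, but it is a short linear computation in $\lambda$.
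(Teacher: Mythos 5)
Your proof is correct and follows essentially the same strategy as the paper: exploit the double zero $\nu_{\lambda,h(\lambda)}(0)=\nu_{\lambda,h(\lambda)}'(0)=0$ and integrate a positivity statement for a higher derivative up from $r=0$, with the breakpoint $\lambda=2$ entering through the same boundary constants (note $\nu_{\lambda,h(\lambda)}''(0)=2\lambda-h(\lambda)+1$, which is your $g(0)$, and which the paper also computes as $0$ resp.\ $2-\lambda$ in the two regimes). The only difference is mechanical: the paper differentiates once more, proving $\nu_{\lambda,h(\lambda)}'''(r)\ge 0$ on $[0,\infty)$ via the exponential comparison $e^{2r}(\lambda r+\lambda-1)\ge 1$ resp.\ $e^{2r}(\lambda r+1)\ge 1$, whereas you stop at the second derivative and use the factorization $\nu_{\lambda,h(\lambda)}''(r)=e^{-r}g(r)$ with $g(r)=e^{2r}\bigl(\lambda r+2\lambda-h(\lambda)\bigr)+1$; since $\nu'''=e^{-r}\bigl(g'(r)-g(r)\bigr)$, the paper's check amounts to $g'\ge g$ while yours is $g'\ge 0$ together with $g(0)\ge 0$ — near-identical in content, with your final verification ($g'(0)=5\lambda-2h(\lambda)\ge 0$, a linear inequality in $\lambda$) arguably a shade more elementary.
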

\begin{proof}
We have
\begin{align*}
\nu_{\lambda,h(\lambda)}''(0)= \left\{\begin{array}{ll} 0, & \lambda \geq 2, \\
     2 - \lambda  & 1\leq \lambda < 2 \end{array}\right.
\end{align*}
and 
\begin{align*}
\nu_{\lambda,h(\lambda)}'''(r)= \left\{\begin{array}{ll} e^r \big( \lambda r + \lambda -1\big) - e^{-r}, & \lambda \geq 2, \\
     e^r \big( \lambda r  + 1\big) - e^{-r}  & 1\leq \lambda < 2. \end{array}\right.
\end{align*}
Note that $\nu_{\lambda,h(\lambda)}'''(r)\geq 0$ is equivalent to
$e^{2r}\big( \lambda r + \lambda - 1\big)\geq 1$ in the case of $\lambda \geq 2$ and, for  $1 \leq \lambda <2$, to $e^{2r} \big( \lambda r + 1 \big)\geq 1$. Both conditions are apparently satisfied for any $r\geq 0$. Hence, we observe $\nu_{\lambda,h(\lambda)}'''(r)\geq 0$ for any $\lambda \geq 1$ and any $r \geq 0$.
This suffices to establish the claim, since we can deduce $\nu_{\lambda,h(\lambda)}''(r) \geq 0$ from $\nu_{\lambda,h(\lambda)}''(0) \geq 0$ and, by the same argument, $\nu_{\lambda,h(\lambda)}'(r)\geq 0$ and $\nu_{\lambda,h(\lambda)}(r) \geq 0$.
\end{proof}
\begin{lemma}\label{lem:nucdlemma}
Let $\lambda \geq 2$. Then $\nu_{\lambda,2\lambda+1}(r) \geq 0$ for any $r \in \R$ if and only if $\lambda=2$.
\end{lemma}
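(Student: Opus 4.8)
The plan is to exploit the derivative formulas for $\nu_{c,d}$ recorded just above, together with the normalizations $\nu_{c,d}(0)=0$ and the fact that $r=0$ is a critical point (so $\nu_{c,d}'(0)=0$). Specializing to $c=\lambda$, $d=2\lambda+1$, I would first read off the low-order Taylor data at $r=0$ by inserting $r=0$ into the explicit expressions for $\nu_{c,d}''$, $\nu_{c,d}'''$, $\nu_{c,d}''''$. This gives $\nu_{\lambda,2\lambda+1}(0)=\nu_{\lambda,2\lambda+1}'(0)=\nu_{\lambda,2\lambda+1}''(0)=0$, while $\nu_{\lambda,2\lambda+1}'''(0)=\lambda-2$ and $\nu_{\lambda,2\lambda+1}''''(0)=2\lambda>0$. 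These three vanishing orders and the sign of the third derivative at $0$ are the whole engine of the argument.

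For the \emph{only if} direction I would suppose $\lambda>2$. Since $\nu_{\lambda,2\lambda+1}$ vanishes to second order at $0$ with $\nu_{\lambda,2\lambda+1}'''(0)=\lambda-2>0$, Taylor's theorem yields $\nu_{\lambda,2\lambda+1}(r)=\tfrac{\lambda-2}{6}\,r^3+O(r^4)$ as $r\to 0$. Choosing $r<0$ of sufficiently small modulus makes the cubic term dominate and forces $\nu_{\lambda,2\lambda+1}(r)<0$. Hence nonnegativity on all of $\R$ fails whenever $\lambda>2$, which is exactly the contrapositive of the asserted implication.

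For the \emph{if} direction I would set $\lambda=2$ (so $d=2\lambda+1=5$) and bootstrap through the derivatives in the same spirit as the proof of Lemma \ref{lem:nucdforpositive}. The starting point is $\nu_{2,5}''''(r)=e^r(2r+3)+e^{-r}$, and the key step is to show $\nu_{2,5}''''>0$ on all of $\R$. Multiplying by $e^r>0$ reduces this to the scalar inequality $e^{2r}(2r+3)>-1$; minimizing the left-hand side (its derivative $4e^{2r}(r+2)$ vanishes only at $r=-2$) produces the global lower bound $-e^{-4}>-1$, which settles it. With $\nu_{2,5}''''>0$ in hand, $\nu_{2,5}'''$ is strictly increasing and $\nu_{2,5}'''(0)=\lambda-2=0$, so it changes sign precisely at $0$; therefore $\nu_{2,5}''$ has its minimum at $0$, where $\nu_{2,5}''(0)=0$, giving $\nu_{2,5}''\ge 0$. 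Consequently $\nu_{2,5}'$ is non-decreasing, and since $\nu_{2,5}'(0)=0$ it is negative for $r<0$ and positive for $r>0$, so $\nu_{2,5}$ attains a global minimum at $0$ with value $\nu_{2,5}(0)=0$. Thus $\nu_{2,5}\ge 0$ on $\R$.

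The only genuinely nontrivial point is the one-variable minimization establishing $\nu_{2,5}''''>0$, and even there the difficulty is mild: it amounts to locating the critical point $r=-2$ and checking the threshold $e^{-4}<1$. Everything else is bookkeeping with the explicit derivative formulas and the sign-propagation argument, so I expect no real obstacle beyond carrying out that minimization carefully.
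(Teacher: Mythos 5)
Your proof is correct and follows essentially the same route as the paper: the only-if direction rests on $\nu_{\lambda,2\lambda+1}(0)=\nu'(0)=\nu''(0)=0$ together with $\nu'''(0)=\lambda-2>0$ forcing a sign change near $0$, and the if direction propagates signs down through the derivatives to conclude convexity and a global minimum at $0$. The only cosmetic difference is at the top of the cascade: the paper shows $\nu_{2,5}'''$ vanishes only at $r=0$ (via the equation $e^{2r}(2r+1)=1$) and uses $\nu_{2,5}''''(0)=4$ to identify the minimum of $\nu_{2,5}''$, whereas you prove the slightly stronger fact $\nu_{2,5}''''>0$ on all of $\R$ by minimizing $e^{2r}(2r+3)$ at $r=-2$ --- both amount to the same elementary one-variable minimization.
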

\begin{proof}
We have
\begin{align*}
\nu_{\lambda,2\lambda+1}'(r) &= e^r (\lambda r - (\lambda+1)) +\lambda+2 - e^{-r},  \\
\nu_{\lambda,2\lambda+1}''(r) &= e^r (\lambda r-1)+e^{-r}, \\
\nu_{\lambda,2\lambda+1}'''(r) &= e^r(\lambda r + \lambda -1)-e^{-r}.
\end{align*}
In particular, $\nu_{\lambda,2\lambda+1}(0)=\nu_{\lambda,2\lambda+1}'(0)=\nu_{\lambda,2\lambda+1}''(0)=0$.
Moreover,  $\nu_{\lambda,2\lambda+1}'''(0)= \lambda-2$ and thus $x=0$ is a saddle point whenever $\lambda>2$. This implies that there exist some $\delta_\lambda >0$ such that $\nu_{\lambda,2\lambda+1}(r)<0$ for any $r\in (-\delta_\lambda,0)$.

On the other hand, we have $\nu_{2,5}'''(r)=0$ if and only if $e^{2r}(2r+1)=1$ and a straightforward calculation shows that this only holds for $r=0$, i.e. $\nu_{2,5}'''(0)=0$ and $0$ is the only critical point for $\nu_{2,5}''$. As $\nu_{2,5}''''(0)=4$, $\nu_{2,5}$ has a local minimum at $0$ and is convex on $\R$, which yields the  claim.
\end{proof}
\begin{lemma}\label{App_notlin}
The following assertions hold true:
\begin{itemize}
\item[(i)] Let $\alpha>1$ and $\beta \in \R$. Then there exists some $r_\alpha < 0$ and $\overline{\lambda}>0$ such that
\begin{equation*}
\nu_{\lambda,\alpha \lambda+ \beta}(r_\alpha) < 0
\end{equation*}
for any $\lambda \geq \overline{\lambda}$.
\item[(ii)] We have for any $r \in \R$  and $\lambda \geq 2$
\begin{equation*}
\nu_{\lambda,\lambda+\tau(\lambda)}(r) \geq 0,
\end{equation*}
with $\tau(\lambda)= 2^\frac{3}{2} \sqrt{\lambda}-1$.\\
Moreover, for $\lambda \in [1,2)$, we have $\nu_{\lambda,3\lambda - 1}(r)\geq 0$ for any $r \in \R$.
\end{itemize}
\end{lemma}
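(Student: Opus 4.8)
The plan is to treat the two parts separately, using throughout the explicit derivative formulas for $\nu_{c,d}$ recorded at the beginning of this appendix together with the elementary facts $\nu_{c,d}(0)=0$ and $\nu_{c,d}'(0)=0$ (the latter because $\Upsilon'(0)=0$).

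For (i) I would first isolate the dependence on $\lambda$. With $c=\lambda$ and $d=\alpha\lambda+\beta$ one has
\[
\nu_{\lambda,\alpha\lambda+\beta}(r)=\lambda\,g(r)+h(r),\qquad g(r):=\Upsilon'(r)r-\alpha\Upsilon(r),\quad h(r):=\Upsilon(-r)-\beta\Upsilon(r).
\]
A direct computation gives $g(r)=e^r(r-\alpha)+(\alpha-1)r+\alpha$, and since $\alpha>1$ the linear term dominates as $r\to-\infty$, so $g(r)\to-\infty$. Hence one may fix some $r_\alpha<0$ with $g(r_\alpha)<0$; then $\nu_{\lambda,\alpha\lambda+\beta}(r_\alpha)=\lambda g(r_\alpha)+h(r_\alpha)\to-\infty$ as $\lambda\to\infty$, which produces the required $\overline\lambda$. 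This part is short.

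For (ii) the range $r\ge 0$ reduces to Lemma \ref{lem:nucdforpositive}. When $\lambda\in[1,2)$ the exponent $d=3\lambda-1$ is exactly $h(\lambda)$ from that lemma, so $\nu_{\lambda,3\lambda-1}(r)\ge 0$ for $r\ge 0$ is immediate. When $\lambda\ge 2$ one checks $\lambda+\tau(\lambda)\le 2\lambda+1$, equivalently $(\sqrt\lambda-\sqrt2)^2\ge 0$, and then Lemma \ref{lem:verybasicsnucd}(iii) yields $\nu_{\lambda,\lambda+\tau(\lambda)}(r)\ge\nu_{\lambda,2\lambda+1}(r)\ge 0$ on $[0,\infty)$.

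The heart of the matter is $r<0$, where the plan is a monotonicity argument. Setting $\phi(r):=e^r\nu_{c,d}''(r)=e^{2r}(cr+2c-d)+1$, I would show that $\phi$ has a unique global minimum at $r^*=d/c-5/2$, and that for the parameters at hand $r^*\le 0$ while $\nu_{c,d}''(0)\ge 0$ (indeed $\nu''(0)=(\sqrt\lambda-\sqrt2)^2$ when $d=\lambda+\tau(\lambda)$, and $\nu''(0)=2-\lambda>0$ when $d=3\lambda-1$). If $\phi(r^*)\ge 0$ then $\nu_{c,d}$ is convex and nonnegativity follows from $\nu(0)=\nu'(0)=0$ (this covers the boundary case $\lambda=2$, cf.\ Lemma \ref{lem:nucdlemma}). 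Otherwise $\phi$ has exactly two zeros $r_1<r_2\le 0$, so on $(-\infty,0)$ the derivative $\nu'$ attains its only local maximum at $r_1$. The key step is to evaluate $\nu'$ there: using $\nu''(r_1)=0$ to eliminate the linear term gives
\[
\nu'(r_1)=-\bigl(2e^{-r_1}+\lambda e^{r_1}\bigr)+(1-\lambda+d).
\]
By the arithmetic--geometric mean inequality $2e^{-r_1}+\lambda e^{r_1}\ge 2\sqrt{2\lambda}=2\sqrt2\sqrt\lambda$, whereas $1-\lambda+d$ equals $2\sqrt2\sqrt\lambda$ when $d=\lambda+\tau(\lambda)$ and equals $2\lambda\le 2\sqrt2\sqrt\lambda$ when $d=3\lambda-1$ with $\lambda\le 2$. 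In either case $\nu'(r_1)\le 0$, so $\nu'\le 0$ on all of $(-\infty,0]$, whence $\nu$ is nonincreasing there and $\nu(r)\ge\nu(0)=0$ for $r\le 0$. The main obstacle is precisely this $r<0$ analysis: one must correctly locate the sign changes of $\nu''$ through $\phi$ and recognise that the constant $\tau(\lambda)=2^{3/2}\sqrt\lambda-1$ is calibrated exactly so that $1-\lambda+d$ matches the AM--GM bound, which is what makes the estimate at the local maximum of $\nu'$ close.
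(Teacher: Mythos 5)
Your proof is correct and takes essentially the same route as the paper: in part (ii) both arguments reduce the decisive negative range to showing $\nu'\le 0$ on $(-\infty,0]$, evaluate $\nu'$ at zeros of $\nu''$ by substituting the equation $\nu''=0$ to eliminate the linear term, and close with the same calibrated bound $2e^{-r_1}+\lambda e^{r_1}\ge 2\sqrt{2\lambda}=\tau(\lambda)+1$ (the paper phrases this through $\varphi(y)=-\nu'(-y)$ and minimizes $g(y)=2e^y+\lambda e^{-y}$ by calculus where you invoke AM--GM, and it handles $d=3\lambda-1$ via the comparison $\lambda+\tau(\lambda)\ge 3\lambda-1$ and Lemma \ref{lem:verybasicsnucd}(iii) where you treat it directly -- purely cosmetic differences). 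Part (i) coincides with the paper's decomposition $\nu_{\lambda,\alpha\lambda+\beta}=\lambda\,(\Upsilon'(r)r-\alpha\Upsilon(r))+\Upsilon(-r)-\beta\Upsilon(r)$ and the same asymptotic argument as $r\to-\infty$.
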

\begin{proof}
(i): We have
\begin{equation}\label{Aeq:nolinear}
\nu_{\lambda,\alpha \lambda+ \beta}(r) = \lambda \big( \Upsilon'(r)r - \alpha \Upsilon(r) \big) + \Upsilon(-r) - \beta \Upsilon(r). 
\end{equation}
Since the mappings $r \mapsto \Upsilon'(r) r$ and $r \mapsto \Upsilon(r)$ have the same asymptotic behaviour as $r \to - \infty$, we find some $r_\alpha<0$, such that  $\Upsilon'(r_\alpha)r_\alpha - \alpha \Upsilon(r_\alpha)<0$. Hence, the claim follows from \eqref{Aeq:nolinear}.

(ii): We have $\nu_{\lambda,\lambda+\tau(\lambda)}(r)= \lambda \big(\Upsilon'(r)r- \Upsilon(r)\big) + \Upsilon(-r) - \tau(\lambda) \Upsilon(r)$ and hence
\begin{equation}\label{derivativenulambdatau}
\nu_{\lambda,\lambda + \tau(\lambda)}'(r) = \lambda \Upsilon''(r) r - \Upsilon'(-r) - \tau(\lambda) \Upsilon'(r).
\end{equation}
We aim to show that $\nu_{\lambda, \lambda + \tau(\lambda)}'(r) \leq 0$ for any $r \leq 0$ and any $\lambda \geq 1$. This suffices to deduce the claim, since Lemma \ref{lem:nucdlemma} reduces the claim in the case of $\lambda \geq 2$ to negative arguments because of Lemma \ref{lem:verybasicsnucd}(iii) and  the relation
\begin{equation}\label{lambdatauestimate}
\lambda + \tau(\lambda) \leq 2 \lambda + 1.
\end{equation}
To see the latter, we define $p(\lambda)=\lambda -2^{\frac{3}{2}}\sqrt{\lambda}$, $\lambda \in (0,\infty)$.
We have $p'(\lambda)=1-\frac{\sqrt{2}}{\sqrt{\lambda}}$ and we obtain, that $p$ attains the global minimum, when $\sqrt{\lambda} = \sqrt{2}$. Further, $p(2) = -2$ and hence
\begin{align*}
\lambda - \tau(\lambda) = p(\lambda) +1 \geq -1,
\end{align*}
which implies \eqref{lambdatauestimate}. Further, one checks that $\lambda + \tau(\lambda) \geq 3 \lambda - 1 $ holds true if $1 \leq \lambda < 2 $. Consequently, if $\nu_{\lambda, \lambda + \tau(\lambda)}'(r) \leq 0$ for any $r \leq 0$ is valid, one deduces the claim from Lemma \ref{lem:nucdforpositive} combined with Lemma \ref{lem:verybasicsnucd}(iii).

Thus, replacing $r$ by $-y$ in \eqref{derivativenulambdatau}, it remains to show  
\begin{equation}\label{lambdataudesiredest}
\varphi(y):= \lambda e^{-y} y  + e^y - 1 + \tau(\lambda)(e^{-y}-1) \geq 0
\end{equation}
for any $y\geq 0$. Due to the asymptotic behaviour of $\varphi$ and the fact that $\varphi(0)=0$, it suffices to investigate the critical points of $\varphi$ in $(0,\infty)$. We have 
\begin{align*}
\varphi'(y)= e^{-y} \big(\lambda - \lambda y - \tau(\lambda)\big) + e^y
\end{align*}
and hence, as a necessary condition, $e^{2y} = \lambda y - \lambda + \tau(\lambda)$, i.e.
\begin{equation}\label{necformini}
y_*= \frac{e^{2y_*}+\lambda - \tau(\lambda)}{\lambda}
\end{equation}
must hold at a critical point of $\varphi$. Plugging \eqref{necformini} into \eqref{lambdataudesiredest} yields
\begin{align*}
0 \leq e^{-y_*} \big( e^{2y_*} + \lambda - \tau(\lambda) \big) + e^{y_*} - 1 + \tau(\lambda) ( e^{-y_*}-1) = 2e^{y_*} + \lambda e^{-y_*} - (\tau(\lambda) + 1).
\end{align*}
We define $g(y)=2 e^y + \lambda e^{-y}$. If $\lambda \in [1,2)$ we have $g'(y)>0$ for any $y \geq 0$ and the relation $g(0)\geq \tau(\lambda)+1$ can be readily checked. In case of $\lambda \geq 2$, $g'(y)= 0$ is equivalent to $y= \log\big(\sqrt{\frac{\lambda}{2}}\big)$. Clearly, this is the global minimum point of $g$ in $\R$. We have
\begin{align*}
g\big(\log\big( \sqrt{\frac{\lambda}{2}}\big) \big) =2 \sqrt{\frac{\lambda}{2}} + \lambda \sqrt{\frac{2}{\lambda}} = 2^{\frac{3}{2}} \sqrt{\lambda}.
\end{align*}
Hence, by definition of $\tau(\lambda)$, 
we deduce
\begin{align*}
2e^{y_*} + \lambda e^{-y_*}  \geq \tau(\lambda) + 1,
\end{align*}
which establishes the claim.
\end{proof}

\end{document}